\documentclass{amsart}
\usepackage{amsmath,amssymb,amsfonts,amsthm}
\usepackage{color}

\usepackage{graphicx}
\allowdisplaybreaks

\theoremstyle{plain}
\newtheorem{theorem}{Theorem}[section]
\newtheorem{proposition}[theorem]{Proposition}
\newtheorem{lemma}[theorem]{Lemma}

\newtheorem{question}[theorem]{Question}
\newtheorem{definition}[theorem]{Definition}
\newtheorem{claim}[theorem]{Claim}
\newtheorem{remark}[theorem]{Remark}

\theoremstyle{definition}
\theoremstyle{remark}

\begin{document}

\title
[On strong algebrability and spaceability of continuous functions]{On strong algebrability and spaceability  of continuous functions and fractal dimensions}
\author{Jia Liu${^1}$ \and Saisai Shi${^2}$}
\address{$^{1}$ Institute of Statistics and Applied Mathematics, Anhui University of Finance and Economics, 233030, Bengbu, P. R. China}
\email{liujia860319@163.com; 120140012@aufe.edu.cn}
\address{${^2}$
Institute of Statistics and Applied Mathematics, Anhui University of Finance and Economics, 233030, Bengbu, P. R. China}
\email{saisai\_shi@126.com}

\thanks{This work was supported by NSFC No. 12501109 and Scientific Research Project of Colleges and Universities in Anhui Province (2023AH050264).\\}
\date{}
\begin{center}

\end{center}
\begin{abstract}
In this paper, we investigate the strong algebrability and $(\alpha,\beta)$-lineability/spaceability of continuous functions with prescribed fractal dimensions.
For $1< s< r< t\leq2$, we define
$$H_s[0,1]=\{f\in C[0,1]:{\dim}_HG_f([0,1])=s\},$$
$$\underline{B}_r[0,1]=\{f\in C[0,1]:\underline{{\dim}}_BG_f([0,1])=r\}$$
and
$$\overline{B}_t[0,1]=\{f\in C[0,1]:\overline{{\dim}}_BG_f([0,1])=t\}.$$
We prove that $H_s[0,1]\cap\underline{B}_r[0,1]\cap\overline{B}_t[0,1]$ is both strongly $\mathfrak{c}$-algebrable and  spaceable. This complements recent findings of Bonilla et al. \cite{BFBS}, Esser et al. \cite{EMVVS}, and Liu et al. \cite{LZS}.

We prove that for any $1<s\leq t\leq2$, $H_s[0,1]\cap\overline{B}_t[0,1]$ is $(p,\mathfrak{c})$-spaceable for $p=1,2$.  We also prove that $H_s[0,1]\cap\overline{B}_t[0,1]$ is $(n,m+n)$-lineable for any $m,n\in\mathbb{N}$, thus complementing the recent work of Liu et al. \cite{LS}.

\medskip

\noindent{\bf Keywords.} {Lineability, Spaceability, Algebrability, Hausdorff dimension, Lower box dimension, Upper box dimension,  Continuous functions}
\smallskip

\noindent{\bf 2020 Mathematics Subject Classification.} 26A16; 28A78; 15A03; 46B87

\end{abstract}

\maketitle

\section{Introduction}

In the present paper, we investigate the algebraic genericity of continuous functions, with a focus on their behavior with respect to fractal dimensional properties.
Lineability, spaceability and algebrability, three key notions for characterizing algebraic genericity, were originally coined by V. I. Gurariy and they first appeared in \cite{ AGS,AS,BQ,GQ,Sea}.
Since those concepts were put forward, research in this field has quickly attracted the attention of numerous scholars. The properties of lineability, spaceability and algebrability are investigated in several contexts with interesting applications in many fields such as Real and Complex Analysis \cite{Alb,APG,AGS,GMFS,GC,EGS,Sea}, Series Theory \cite{AES,APG1,BGP,GJM,Bay}, Linear Dynamics \cite{GMSFS}, Operator Theory \cite{BDP,BF,PT}, Classical Banach Sequence Space Theory \cite{BBFP,BG,BDFP,GC,CarS,LRS}, Measure and Probability Theory \cite{BFG,CFMS,FST,MPPS}, Set Theory \cite{GS}, ODE \cite{BBFP} as well as Fractal Geometry \cite{BFBS,CFS,LS,LZS} and Multi-Fractal Analysis \cite{EJ}. With further research in recent years, a series of related stronger concepts have been successively introduced, such as strong algebrability \cite{BG}, $(\alpha,\beta)$-lineability/spaceability \cite{FPT} and pointwise lineability/spaceability \cite{PellRa}, etc.

Let us recall some notations and definitions before proceeding further.
From now on, let $\mathbb{N}$ be the set of positive integers and $\mathbb{R}$ the set of real numbers. We write $\aleph_0$ for the cardinality of $\mathbb{N}$ and $\mathfrak{c}$ for the cardinality of $\mathbb{R}$.  The space $C[0,1]$ of real-valued continuous functions from $[0,1] \to \mathbb{R}$ is equipped with the maximum norm. As is well known, $C[0,1]$ is an infinite dimensional separable Banach space and its dimension equals $\mathfrak{c}$. For a function $f: K \subseteq \mathbb{R} \to \mathbb{R}$, its graph is denoted by $G_f(K)$, i.e.,
\[
G_f(K) = \{(x, f(x)) : x \in K\} \subseteq K \times \mathbb{R}.
\]

Let $X$ be a vector space and let $\alpha$, $\beta$ be two cardinal numbers with $\alpha\leq\beta$. We say that a subset $A\subseteq X$ is
\begin{itemize}
  \item lineable if there is an infinite dimensional subspace $M$ such that $M\subseteq A\cup\{0\}$.
  \item $\alpha$-lineable if there is an $\alpha$-dimensional subspace $M$ such that $M\subseteq A\cup\{0\}$.
  \item maximal-lineable if it is ${\dim}(X)$-lineable.
    \item $(\alpha,\beta)$-lineable if it is $\alpha$-lineable and for each $\alpha$-dimensional subspace $W_\alpha \subseteq A \cup \{0\}$, there is a $\beta$-dimensional subspace $W_\beta$ such that

   \begin{equation}\label{eqdef}
    W_\alpha \subseteq W_\beta \subseteq A \cup \{0\}.
   \end{equation}
\end{itemize}

\medskip
If, in addition, $X$ is a topological vector space, then the subset $A$ is said to be

\begin{itemize}
  \item  spaceable  whenever there is a closed infinite dimensional subspace $M$
      such that $M\subseteq A\cup\{0\}$.
  \item dense-lineable  whenever there is an infinite dimensional dense  subspace $M$ such that $M\subseteq A\cup\{0\}$.
  \item $\alpha$-dense-lineable whenever there is an $\alpha$-dimensional dense subspace $M$ such that $M\subseteq A\cup\{0\}$.
  \item  maximal-dense-lineable whenever there is a ${\dim}(X)$-dimensional dense subspace $M$ such that $M\subseteq A\cup\{0\}$.
  \item $(\alpha,\beta)$-spaceable if the subspace $W_\beta$ satisfying (\ref{eqdef}) can always be chosen closed.
  \item $(\alpha,\beta)$-dense-lineable if it is $\alpha$-lineable and for each $\alpha$-dimensional subspace $W_\alpha \subseteq A \cup \{0\}$ there is a $\beta$-dimensional dense subspace $W_\beta$ such that
    \[
        W_\alpha \subseteq W_\beta \subseteq A \cup \{0\}.
    \]
\end{itemize}

And, the term algebrability is defined by a similar approach, which was introduced in \cite{AS}. If $X$ is a vector space contained in some (linear) algebra, then $A$ is called:
\begin{itemize}
  \item   algebrable if there is a subalgebra $C\subseteq X$ so that $C\subseteq A\cup\{0\}$ and $C$
has an infinite minimal system of generators. Here, by $S=\{z_{s}\}$ a minimal set of generators of $C$, we mean that $C = \mathcal{A}(S)$ is the algebra generated by $S$, and for every $s_0, z_{s_0}\notin\mathcal{A}(S\setminus \{z_{s_0} \})$.
 \item dense-algebrable if, in addition, $C$ can be taken dense in $X$.
\end{itemize}

A strengthened notion of algebrability was introduced in \cite{BG}. Given a commutative algebra $Y$, a subset $A$ of $Y$ is called
\begin{itemize}
\item strongly $\alpha$-algebrable if there exists a set $Z=\{z_{\gamma}:\gamma<\alpha\}\subseteq A$
of free generators of a subalgebra $\mathcal{B}\subseteq A\cup\{0\}$ (that is, the set $\hat{Z}$ of all elements of the form $z^{k_1}_{\alpha_1}z^{k_2}_{\alpha_2}\dots z^{k_n}_{\alpha_n}$, with nonnegative integers $k_1,\dots,k_n$ nonequal to zero simultaneously, is linearly independent and $\operatorname{span}\hat{Z}\subseteq A\cup\{0\}$).
\end{itemize}

\medskip
Recently, the notions of pointwise $\alpha$-lineability (respectively, spaceability) in \cite{PellRa} were introduced as follows:
\begin{itemize}
    \item a subset $A$ of a vector space $X$ is called pointwise $\alpha$-lineable if, for each $x \in A$, there is an $\alpha$-dimensional subspace $W_x$ such that
\begin{equation}\label{eqdef2}
        x \in W_x \subseteq A \cup \{0\}.
\end{equation}

  \item if $X$ is a topological vector space and the subspace $W_x$ satisfying (\ref{eqdef2}) can always be chosen closed (dense), we say that $A$ is respectively pointwise $\alpha$-spaceable (pointwise $\alpha$-dense-lineable). If $\alpha = \dim(X)$, we say that $A$ is respectively maximal pointwise spaceable (maximal pointwise dense-lineable).
\end{itemize}

 It is plain that the concepts of $(\alpha,\beta)$-lineability and $(\alpha,\beta)$-spaceability respectively imply
   $\beta$-lineability and $\beta$-spaceability; meanwhile, pointwise $\beta$-lineability and $\beta$-spaceability respectively imply $(1,\beta)$-lineability and $(1,\beta)$-spaceability. The reverse implications fail to hold in the general setting (see Example 2.2 in \cite{PellRa}). However, provided the set $A\cup\{0\}$ is closed under scalar products, the pointwise variants of these notions coincide exactly with their respective $(1,\beta)$ counterparts. General criteria for $(\alpha,\beta)$-lineability/spaceability can be found in \cite{AB,FPRR}.

\medskip
Over the last thirty years, numerous authors have extensively studied the pathological real-valued functions \cite{AGPS,APG,AGS,AS,GFVS,GPS,BFBS,CS1,CS2,EGS,FGK,MFSS,GFS}.
Early breakthroughs include Gurariy's proof that the set of continuous nowhere differentiable functions on $[0,1]$ is lineable \cite{Gu}.
Fonf, Gurariy and Kadets \cite{FGK} later established their spaceability in $C[0, 1]$, while
Enflo et al. \cite{EGS} proved that for every infinite dimensional closed subspace $X$ of $C[0, 1]$, the set of functions in $X$ having infinitely many zeros in $[0, 1]$ is spaceable in $X$.
The vast literature on this subject has been built during the last two decades; for a more detailed and comprehensive treatment, we refer to the survey paper \cite{GPS} or the monograph \cite{AGPS}.

\medskip
In the study of algebraic genericity for the fractal dimensions of graphs of continuous functions, Bonilla et al. \cite{BFBS} proved that for a given $s\in(1,2]$, the set of continuous functions whose graph has both Hausdorff and box dimensions $s$ is maximal-dense-lineable in $C[0,1]$ and that the set of functions whose graph has Hausdorff dimension $s$ is spaceable.
Recently, by addressing two open questions raised by Bonilla et al. \cite[Question 2.1 and Question 3.4]{BFBS}, Liu et al. \cite{LZS} proved that given $s\in(1,2]$, the set of functions $f\in C[0,1]$ whose graph has both box and Hausdorff dimensions $s$ everywhere in $[0,1]$
is maximal-dense-lineable and dense-algebrable. They raised the following question \cite[Question 5]{LZS}.
\begin{question}
 Let ${\dim}$ be one of ${\dim}_H$, $\overline{{\dim}}_B$ and $\underline{{\dim}}_B$. Let $\alpha, \beta$ be two cardinal numbers with $1<\alpha<\beta$. Given $s\in(1,2]$, is it possible to obtian the pointwise spaceability or $(\alpha,\beta)$-lineability/spaceability of the set of functions $f\in C[0,1]$ with ${\dim}G_f([0,1])=s$?
\end{question}
 Very recently, Liu et al. \cite{LS} partially answered the above question. They proved that given $s\in(1,2]$, the set of continuous functions whose graph has Hausdorff dimension $s$ is $(p,\mathfrak{c})$-spaceable for $p=1,2$ and $(n,n+m)$-lineable for any $m,n\in\mathbb{N}$. They also proved that the set of continuous functions whose graph has upper box dimension $s$ is $(\alpha,\mathfrak{c})$-spaceable if and only if $\alpha<\aleph_0$. Besides, they briefly discussed the $(\alpha,\beta)$-lineability of functions whose graph has both Hausdorff and box dimension $s$. Notably, C. Esser et al. \cite{EMVVS} proved that the set of functions $f\in C[0,1]$ with ${\dim}_HG_f([0,1])=s\in(1,2]$ is strongly $\mathfrak{c}$-algebrable.

\medskip

Inspired by the aforementioned results, we shall continue the research on the relevant issues in the present paper.
The paper is organized as follows. In forthcoming Section 2, we shall collect the definitions of Hausdorff, lower box and upper box dimensions and a series of auxiliary results which will be used throughout the paper.
In Section 3, we present our first main result (Theorem \ref{thmn1} and Theorem \ref{thmn2}) which states that for any $1< s< r< t\leq 2$, the intersection
$H_s[0,1] \cap \underline{B}_r[0,1] \cap \overline{B}_t[0,1]$
is strongly $c$-algebrable and spaceable. This complements recent findings of Bonilla et al. \cite{BFBS}, Esser et al. \cite{EMVVS} and Liu et al. \cite{LZS}.
In Section 4, we prove that for any $1<s\leq t\leq2$, $H_s[0,1] \cap \overline{B}_t[0,1]$ is $(p,\mathfrak{c})$-spaceable for $p=1,2$ and $(n,n+m)$-lineable for any $n,m\in\mathbb{N}$, thus complementing the recent work of Liu et al. \cite{LS}.
\medskip
\section{Preliminaries}

\smallskip For the convenience of the reader, we recall the definitions of Hausdorff, lower box and upper box dimensions. For a non-empty set $A \subseteq \mathbb{R}^d$, the s-dimensional Hausdorff measure is defined as
 \[
\mathcal{H}^s(A) = \lim_{\delta \to 0} \mathcal{H}^s_\delta(A),  \text{where}
\]
\[
\mathcal{H}^s_\delta(A) = \inf\left\{ \sum_{i=1}^\infty |U_i|^s : A \subseteq \bigcup_{i=1}^\infty U_i,\ \forall i \ |U_i| \leq \delta\ \right\},
\]
hereafter, for a set $U$, we let $|U|$ denote its diameter.
The Hausdorff dimension of $A$ is defined as
\[
\dim_H A = \inf\{ s \geq 0 : \mathcal{H}^s(A) = 0 \} = \sup\{ s \geq 0 : \mathcal{H}^s(A) = \infty \}.
\]

\medskip
Furthermore, given a non-empty and bounded subset $A$ of $\mathbb{R}^d$, let $N_\delta(A)$ denote the smallest number of sets of diameter at most $\delta$ needed to cover $A$. Then the lower and upper box dimensions of $A$ are defined respectively as
\[
\underline{\dim}_B A = \liminf_{\delta \to 0} \frac{\log N_\delta(A)}{-\log \delta} \quad \text{and} \quad \overline{\dim}_B A = \limsup_{\delta \to 0} \frac{\log N_\delta(A)}{-\log \delta}.
\]
If $\underline{\dim}_B A=\overline{\dim}_B A$, then we call the common value the box dimension of $A$ and denote it by $\dim_B A$, that is
\[
\dim_B A = \lim_{\delta \to 0} \frac{\log N_\delta(A)}{-\log \delta}.
\]
For more details on the above definitions and their properties, see \cite{Fal}. Notice that the relationship between box dimension and Hausdorff dimension is shown in the following inequality
\begin{equation}\label{sec2equ}
\dim_H A \leq \underline{\dim}_B A \leq \overline{\dim}_B A
\end{equation}
and the fact
\begin{equation}\label{sec2equ1}
\dim(A\times [0,1])= {\dim}A+1
\end{equation}
for any non-empty bounded subset $A$ of $\mathbb{R}^d$, where ${\dim}$ denotes any one of ${\dim}_H$, $\underline{\dim}_B$ and $\overline{\dim}_B$.

\medskip
For a bounded subset $K\subseteq\mathbb{R}$, we denote by $C(K)$ the set of all real-valued uniformly continuous functions on $K$. Assume that $s \in (0,+\infty)$ and $K \subseteq [0,1]$, denote $H_s(K)$, $H_{<s}(K)$, $\overline{B}_s(K)$,$\overline{B}_{<s}(K)$, $\underline{B}_s(K)$ and $\underline{B}_{<s}(K)$ by
\noindent
\[
H_s(K) = \left\{ f \in C(K) : {\dim}_H G_f(K) = s \right\},
\]
\[
H_{<s}(K) = \left\{ f \in C(K) : {\dim}_H G_f(K) < s \right\},
\]
\[
\overline{B}_s(K) = \left\{ f \in C(K) : \overline{\dim}_B G_f(K) = s \right\},
\]
\[
\overline{B}_{<s}(K) = \left\{ f \in C(K) : \overline{\dim}_B G_f(K) < s \right\},
\]
\[
\underline{B}_s(K) = \left\{ f \in C(K) : \underline{\dim}_B G_f(K) = s \right\},
\]
\[
\underline{B}_{<s}(K) = \left\{ f \in C(K) : \underline{\dim}_B G_f(K) < s \right\}.
\]

\medskip
\begin{definition}
Let $K \subseteq [0,1]$ and $f \in C(K)$.
If for all but countably many points $x \in K$, there exists some $\delta_x > 0$ such that $f$ is Lipschitz on $K \cap B(x,\delta_x)$, then $f$ is said to be \emph{nearly locally Lipschitz on $K$}. Here and in what follows, let $B(x,\delta)$ denote the open ball centered at $x$ with radius $\delta$.
\end{definition}

\medskip
Due to the countable stability and bi-Lipschitz invariance of the Hausdorff dimension, we can easily obtain the following lemma.
\begin{lemma} \cite{LS}\label{lemL}
Let $K \subseteq \mathbb{R}$ , $f, g \in C(K)$. If $f$ is nearly locally Lipschitz on $K$, then we have
\[
\dim_H G_{f+g}(K) = \dim_H G_g(K).
\]
In particular, the above equality holds for a Lipschitz function $f$.
\end{lemma}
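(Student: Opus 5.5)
The strategy is to split $K$ into the good set, where $f$ is locally Lipschitz, and the countable exceptional set $E$. Then I use countable stability of $\dim_H$ together with the fact that the map $\Phi(x,y)=(x,y+f(x))$ is bi-Lipschitz on graphs over sets where $f$ is Lipschitz. Concretely, set $E\subseteq K$ to be the (at most countable) set of points $x$ for which no $\delta_x$ exists, and $U=K\setminus E$.

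For each $x\in U$ pick $\delta_x>0$ with $f$ Lipschitz on $K\cap B(x,\delta_x)$. Since $\mathbb{R}$ is second countable (Lindelöf), the open cover $\{B(x,\delta_x)\}_{x\in U}$ of $U$ has a countable subcover $\{B(x_i,\delta_{x_i})\}_{i\in\mathbb{N}}$. Put $K_i=K\cap B(x_i,\delta_{x_i})$, so $f$ is Lipschitz on $K_i$ with some constant $L_i$.

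On $K_i\times\mathbb{R}$, consider $\Phi_i(x,y)=(x,y+f(x))$, whose inverse is $(x,y)\mapsto(x,y-f(x))$. For $(x,y),(x',y')$ with $x,x'\in K_i$ we get
\[
|\Phi_i(x,y)-\Phi_i(x',y')|\le |x-x'|+|y-y'|+L_i|x-x'|\le (2+L_i)\,|(x,y)-(x',y')|,
\]
and the same bound holds for the inverse. Hence $\Phi_i$ is bi-Lipschitz, and it maps $G_g(K_i)$ onto $G_{f+g}(K_i)$. Bi-Lipschitz invariance then gives $\dim_H G_{f+g}(K_i)=\dim_H G_g(K_i)$. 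On $E$, both $G_g(E)$ and $G_{f+g}(E)$ are countable, so each has Hausdorff dimension $0$.

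Now write $G_h(K)=G_h(E)\cup\bigcup_i G_h(K_i)$ for $h\in\{g,f+g\}$. By countable stability,
\[
\dim_H G_{f+g}(K)=\max\Bigl\{\dim_H G_{f+g}(E),\ \sup_i\dim_H G_{f+g}(K_i)\Bigr\}.
\]
Substituting the equalities from the previous paragraph turns the right-hand side into the corresponding expression for $g$, which proves the claim. If $E$ is empty, the $G_h(E)$ terms simply drop out. Note that if $E$ is nonempty and all $K_i$ are empty, both dimensions are $0$, which is consistent with the formula. In the Lipschitz case $E=\emptyset$ and $K_1=K$, so the argument is immediate.

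The only step needing care is the countable-subcover reduction. The sets $B(x,\delta_x)$ may be uncountably many, so Lindelöf is what makes countable stability applicable. Everything else is routine.
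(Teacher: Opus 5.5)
Your proof is correct and follows the route the paper indicates. The paper only cites \cite{LS} and says the lemma follows from countable stability and bi-Lipschitz invariance of Hausdorff dimension; your argument fills in exactly that remark. You split $K$ into the countable exceptional set and countably many pieces (obtained via the Lindel\"of property) on which the shear map $(x,y)\mapsto(x,y+f(x))$ is bi-Lipschitz.
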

By the bi-Lipschitz invariance of lower box and upper box dimensions, we can easily obtain the following lemma.
\begin{lemma}\label{lembi}
Let $K\subseteq \mathbb{R}$ be a bounded set and let $f, g \in C(K)$. If $g$ is a Lipschitz function, then
\[
\dim G_{f+g}(K) = \dim G_f(K),
\]
where $\dim$ denotes any one of $\underline{\dim}_B$ and $\overline{\dim}_B$.
\end{lemma}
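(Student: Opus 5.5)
The idea is to show that $\Phi(x)=(x,f(x))\mapsto (x,f(x)+g(x))$ is a bi-Lipschitz bijection from $G_f(K)$ onto $G_{f+g}(K)$. Once that is in hand, the claim follows from the bi-Lipschitz invariance of the lower and upper box dimensions for bounded sets (see \cite{Fal}).

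First, boundedness. Since $K$ is bounded and $f,g$ are uniformly continuous on $K$, both functions are bounded on $K$. Hence $G_f(K)$ and $G_{f+g}(K)$ are bounded subsets of $\mathbb{R}^2$, so their box dimensions are defined.

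Next, the map itself. Let $L$ be a Lipschitz constant of $g$ and define $\Phi:G_f(K)\to G_{f+g}(K)$ by $\Phi(x,y)=(x,y+g(x))$. This map is surjective, and its inverse is $(x,y)\mapsto(x,y-g(x))$. For points $(x_1,f(x_1))$ and $(x_2,f(x_2))$ of the graph we have
\[
|\Phi(x_1,f(x_1))-\Phi(x_2,f(x_2))|\le |x_1-x_2|+|f(x_1)-f(x_2)|+L|x_1-x_2|\le (L+2)\,|(x_1,f(x_1))-(x_2,f(x_2))|,
\]
using $|x_1-x_2|,\,|f(x_1)-f(x_2)|\le$ the Euclidean distance. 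The same estimate with $-g$ in place of $g$ shows that $\Phi^{-1}$ is $(L+2)$-Lipschitz, so $\Phi$ is bi-Lipschitz.

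Finally, the covering estimate. If $A$ is covered by sets of diameter at most $\delta$, their images under $\Phi$ have diameter at most $(L+2)\delta$. This gives $N_{(L+2)\delta}(\Phi(A))\le N_\delta(A)$, and symmetrically for $\Phi^{-1}$. Since $\log(c\delta)/\log\delta\to1$ as $\delta\to0$, taking $\liminf$ and $\limsup$ shows that both $\underline{\dim}_B$ and $\overline{\dim}_B$ agree for $G_f(K)$ and $G_{f+g}(K)$.

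The argument has no real obstacle. The only point needing care is to write the inequality on graph points, not on arbitrary points of $\mathbb{R}^2$, so that the coordinate structure $(x,f(x))$ is used and $\Phi$ is well defined as a map between the two graphs.
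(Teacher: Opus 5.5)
Your proof is correct and follows the paper's argument, which justifies this lemma in one line by the bi-Lipschitz invariance of $\underline{\dim}_B$ and $\overline{\dim}_B$. You supply the details: the shear map $(x,y)\mapsto(x,y+g(x))$ is bi-Lipschitz with constant $L+2$ (in fact on all of $K\times\mathbb{R}$, so the restriction to graph points is not actually needed), and the comparison $N_{(L+2)\delta}(\Phi(A))\le N_\delta(A)$ then gives equality of both box dimensions.
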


\medskip
\begin{lemma}\label{lemLW}\cite{LW}
Assume that $K$ is a compact subset of $[0,1]$, $f \in C(K)$. Let $\widetilde{f}$ be the linear extension of $f$ to the whole interval $[0,1]$, i.e.,
\[
\widetilde{f}(x)=
\begin{cases}
f(x), & x \in K, \\
\text{affine}, & \text{on each component of } [0,1]\setminus K.
\end{cases}
\]
Then
\[
\dim_H G_{\widetilde{f}}([0,1]) = \max\left\{ \dim_H G_f(K), 1 \right\}.
\]
\end{lemma}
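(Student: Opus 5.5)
The statement is Lemma \ref{lemLW}: for compact $K\subseteq[0,1]$ and $f\in C(K)$ with linear extension $\widetilde f$, we have $\dim_H G_{\widetilde f}([0,1])=\max\{\dim_H G_f(K),1\}$. I would prove the two inequalities separately, using monotonicity and countable stability of Hausdorff dimension.

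For the lower bound, note that $G_f(K)\subseteq G_{\widetilde f}([0,1])$, so $\dim_H G_{\widetilde f}([0,1])\ge \dim_H G_f(K)$. If $K$ contains an interval, then so does $[0,1]$; if not, $[0,1]\setminus K$ is nonempty (unless $K=[0,1]$, which is trivial). In either case $G_{\widetilde f}([0,1])$ contains the graph of a continuous function over a nondegenerate interval. Orthogonal projection onto the $x$-axis is $1$-Lipschitz and maps this graph onto that interval, so its dimension is at least $1$.

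For the upper bound, write $[0,1]\setminus K=\bigcup_j I_j$. This is a countable union of relatively open intervals, the components, which may include half-open pieces at the endpoints $0$ and $1$. Then
\[
G_{\widetilde f}([0,1])=G_f(K)\cup\bigcup_j G_{\widetilde f}(I_j).
\]
On each $I_j$, $\widetilde f$ is affine, so $G_{\widetilde f}(I_j)$ is a line segment and has Hausdorff dimension $1$. Here I should check that $\widetilde f$ is well defined on components touching $0$ or $1$. Such a component has only one endpoint in $K$, so I would take $\widetilde f$ constant there, which is still affine. By countable stability,
\[
\dim_H G_{\widetilde f}([0,1])=\sup\{\dim_H G_f(K),1\}.
\]

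The main obstacle is mild: making sure the decomposition is countable and that each piece is genuinely Lipschitz. Countability holds because the components are disjoint open intervals, each containing a rational. The Lipschitz property holds because each piece is affine on its own interval. Continuity of $\widetilde f$, which is needed only so that $\widetilde f\in C[0,1]$, follows from uniform continuity of $f$ on the compact set $K$ together with interpolation on the gaps. It is not actually required for the dimension count.
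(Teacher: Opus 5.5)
Your proof is correct. The lower bound comes from the $1$-Lipschitz projection $(x,y)\mapsto x$ of $G_{\widetilde f}([0,1])$ onto $[0,1]$, and the upper bound from countable stability applied to $G_f(K)$ together with the countably many line segments over the gaps of $K$. The paper gives no proof of this lemma and simply cites \cite{LW}, so there is nothing to compare against beyond noting that yours is the natural argument.

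One simplification: your case split in the lower bound is unnecessary, since the whole graph over $[0,1]$ already projects onto $[0,1]$. As you note yourself, neither the continuity of $\widetilde f$ nor the particular affine choice on the boundary gaps affects the dimension count.
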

\medskip
\begin{lemma}\cite{FF}\label{lemfal-fr}
Let $f, g \in C([0,1])$. Then
\[
\overline{\dim}_B G_{f+g}([0,1])\leq \max\left\{ \overline{\dim}_B G_f([0,1]), \overline{\dim}_B G_g([0,1]) \right\}.
\]
\end{lemma}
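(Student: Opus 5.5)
The plan is to reduce everything to oscillations of the functions over a grid of intervals, where subadditivity is obvious, and then pass to the $\limsup$. The standard tool is the counting estimate for graphs (Falconer, \emph{Fractal Geometry}, Prop.~11.1). Let $0<\delta<1$ and $m=\lceil 1/\delta\rceil$. For $h\in C[0,1]$, write $R_h[a,b]=\sup_{x,y\in[a,b]}|h(x)-h(y)|$ for the oscillation. Then the number $\widetilde N_\delta(G_h)$ of $\delta$-mesh squares meeting $G_h([0,1])$ satisfies
\[
\delta^{-1}\sum_{i=0}^{m-1}R_h[i\delta,(i+1)\delta]\;\le\;\widetilde N_\delta(G_h)\;\le\;2m+\delta^{-1}\sum_{i=0}^{m-1}R_h[i\delta,(i+1)\delta].
\]
This holds because, by continuity, over each column the graph meets a vertical run of consecutive squares whose number lies between $R_h/\delta$ and $R_h/\delta+2$. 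I would also use the fact that $\widetilde N_\delta$ and $N_\delta$ are comparable up to constants depending only on the dimension. Hence the upper box dimension can be computed with $\widetilde N_\delta$ along $\delta\to0$.

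The key step is the subadditivity of oscillation: for every interval $I$,
\[
R_{f+g}(I)\le R_f(I)+R_g(I),
\]
since $|(f+g)(x)-(f+g)(y)|\le |f(x)-f(y)|+|g(x)-g(y)|$. Summing over the columns and applying the two-sided estimate gives
\[
\widetilde N_\delta(G_{f+g})\le 2m+\widetilde N_\delta(G_f)+\widetilde N_\delta(G_g)\le 3\bigl(\widetilde N_\delta(G_f)+\widetilde N_\delta(G_g)\bigr).
\]
The last inequality uses $\widetilde N_\delta(G_h)\ge m$, which holds because the graph meets at least one square in every column.

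To finish, I would take logarithms. Using $\log(a+b)\le \log 2+\log\max\{a,b\}$, this gives
\[
\frac{\log \widetilde N_\delta(G_{f+g})}{-\log\delta}\le \frac{\log 6}{-\log \delta}+\max\Bigl\{\frac{\log \widetilde N_\delta(G_f)}{-\log\delta},\frac{\log \widetilde N_\delta(G_g)}{-\log\delta}\Bigr\}.
\]
Taking $\limsup_{\delta\to0}$, and noting that the $\limsup$ of a pointwise maximum equals the maximum of the $\limsup$s, yields the claimed inequality.

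The only delicate point is justifying the counting estimate and its comparison with $N_\delta$. I would quote it from Falconer rather than reprove it. The bound is an upper bound only, so no hypothesis beyond continuity is needed. The same argument fails for $\underline{\dim}_B$, because the two functions may achieve small covering numbers along different sequences of scales $\delta$; this is consistent with the lemma being stated only for the upper box dimension.
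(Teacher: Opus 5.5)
Your proof is correct and is the standard argument. The paper gives no proof of its own and simply cites \cite{FF}, whose proof uses exactly this combination: the mesh-counting estimate for graphs in terms of oscillations (Falconer, Prop.~11.1), subadditivity $R_{f+g}(I)\le R_f(I)+R_g(I)$, and the fact that $\limsup$ of a finite maximum is the maximum of the $\limsup$s. The only bookkeeping you gloss over is the boundary squares at $x=0,1$ and the comparison of $\widetilde N_\delta$ with $N_\delta$; both change counts only by bounded factors and do not affect the exponent.
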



\medskip
\begin{lemma}\label{lemLL}\cite{LL}
Let $K \subseteq [0,1]$ be a compact set. Then for any $f \in C(K)$, we have
\[
\overline{\dim}_B G_{f}(K)\leq\overline{\dim}_B G_{\widetilde{f}}([0,1]) \leq \overline{\dim}_B K + 1\]
and
\[
\underline{\dim}_B G_{f}(K)\leq\underline{\dim}_B G_{\widetilde{f}}[0,1] \leq \underline{\dim}_B K + 1,
\]
where $\widetilde{f} \in C([0,1])$ is the linear extension of $f$ to the whole interval $[0,1]$.
\end{lemma}
\begin{lemma}\cite[Theorem, Remark $1$ and $2$]{NW}\label{Ni-W}
For each triple $(s,r,t)$ of real numbers in the interval $[0,1]$ with $s <r < t$, there exists a compact subset $K \subseteq [0,1]$ such that
\[
({\dim}_H K\cap I, \underline{\dim}_B K\cap I, \overline{\dim}_B K\cap I) = (s,r,t)
\]
and $\mathcal{H}^s(K\cap I)>0$
for any non-empty open set $I$ with $K\cap I\ne\emptyset$.
\end{lemma}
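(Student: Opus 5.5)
The statement is quoted from \cite{NW}, and I would prove it by an explicit Moran-type construction. The idea is to take $K=M\cup C$. Here $M$ is a homogeneous Moran set that carries the Hausdorff dimension $s$ and the upper box dimension $t$. The set $C$ is a countable compact set glued densely onto $M$; it raises the lower box dimension to $r$ without changing the Hausdorff dimension or the upper box dimension.

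\emph{Step 1 (the Moran part).} Choose integers $n_k\ge 2$ and ratios $c_k$ with $n_kc_k\le 1$ and $\inf_k c_k>0$. Build $M=\bigcap_k M_k$, where each level-$(k-1)$ interval contains $n_k$ equally spaced closed subintervals of relative length $c_k$. For such sets the standard formulas give
\[
\dim_H M=\underline{\dim}_B M=\liminf_k \frac{\log(n_1\cdots n_k)}{-\log(c_1\cdots c_k)},\qquad
\overline{\dim}_B M=\limsup_k \frac{\log(n_1\cdots n_k)}{-\log(c_1\cdots c_k)}.
\]
I would choose long alternating blocks of levels, with ratios giving local exponents $s$ and $t$. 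The block lengths grow fast enough that the running ratio oscillates with liminf $s$ and limsup $t$. The natural uniform mass $\mu$ on $M$ satisfies $\mu(B(x,\rho))\lesssim\rho^{s-\varepsilon}$ for every $\varepsilon>0$. To get $\mathcal H^s(M\cap I)>0$ exactly, and not just positive $\mathcal H^{s-\varepsilon}$ measure, I would tune the blocks so that $n_1\cdots n_k\ge c\,(c_1\cdots c_k)^{-s}$ for all $k$. The mass distribution principle then gives $\mathcal H^s(M\cap J)>0$ for every construction interval $J$. Since $M$ is homogeneous, all of these properties hold in every $M\cap I$ with $I$ open and $M\cap I\neq\emptyset$.

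\emph{Step 2 (the countable part).} Fix a countable compact set $E\subseteq[0,1]$ of the form $\{0\}\cup\{m^{-a}:m\in\mathbb N\}$, with $a$ chosen so that $\dim_B E=1/(1+a)=r$. If $r=0$, no countable part is needed. For every construction interval $J$ of $M$, I would place a scaled copy of $E$ inside a gap of $M$ adjacent to $J$, scaled by $|J|$ and accumulating at an endpoint of $J$. Then $C$ is the union of all these copies and $K=M\cup C$. The copies accumulate only on $M$, so $K$ is compact. $C$ is countable, so countable stability gives $\dim_H(K\cap I)=\dim_H(M\cap I)=s$, and $\mathcal H^s(K\cap I)\ge\mathcal H^s(M\cap I)>0$. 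Every open $I$ meeting $K$ contains a piece of $M$, because points of $C$ are isolated, or limit points of $C$ lying in $M$.

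\emph{Step 3 (box counts).} This is the main obstacle. I must show that $N_\delta(K\cap I)\approx N_\delta(M\cap I)+N_\delta(C\cap I)$ up to constants and factors $\delta^{\pm\varepsilon}$, and then count $C$ across all scales.

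A copy of $E$ scaled by $|J|=\rho$ contributes about $(\rho/\delta)^r$ $\delta$-boxes when $\delta<\rho$. Summing over the $N_\rho(M)$ intervals at every level $\rho\ge\delta$ gives
\[
N_\delta(C)\approx\sum_{\rho\ge\delta} N_\rho(M)(\rho/\delta)^r.
\]
The terms with $\rho$ near $1$ force $N_\delta(K)\gtrsim\delta^{-r}$ at every scale, so $\underline{\dim}_B K\ge r$.

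The upper bound needs $N_\rho(M)\rho^{r}\lesssim\delta^{r-t-\varepsilon}$. This holds because $N_\rho(M)\le\rho^{-t-\varepsilon}$ and $r<t$; in fact $\overline{\dim}_B(K\cap I)\le t$ would follow from the $\rho\approx\delta$ term dominating. The upper bound $\overline{\dim}_B\le t$ is therefore easy. The sharp lower box value $r$ is the delicate part: at the scales where $M$ looks $s$-dimensional, the sum must not exceed about $\delta^{-r-\varepsilon}$.

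At those scales, the intermediate levels $\rho$ just above $\delta$ may carry a count left over from an earlier $t$-block. I would handle this by choosing the scale $\delta_k$ at the end of a very long $s$-block, long relative to the preceding $t$-block. The geometric decay of the factor $(\rho/\delta)^r$ kills the older contributions, giving $N_{\delta_k}(K)\le\delta_k^{-r-\varepsilon}$. If this estimate is too lossy, my fallback is to attach copies of $E$ only at the levels that begin each $s$-block. Shrinking their scale still keeps the $\delta^{-r}$ lower bound from the top copy, and the same computation applied to $K\cap I$ gives all three dimensions locally.
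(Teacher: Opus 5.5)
The paper gives no proof of this lemma; it is quoted from \cite{NW}. Your proposal has a genuine gap, and it is in the sentence ``every open $I$ meeting $K$ contains a piece of $M$''. That sentence is false for your $K$.

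Each scaled copy of $E$ sits inside a gap of $M$ and accumulates only at an endpoint of $J$. Its remaining points are therefore isolated points of $K$. Take a small open interval $I$ around such a point. Then $K\cap I$ is a singleton, so $\dim_H$, $\underline{\dim}_B$ and $\overline{\dim}_B$ of $K\cap I$ are all $0$. The lemma, however, requires $\overline{\dim}_B(K\cap I)=t>r>0$, $\dim_H(K\cap I)=s$ and $\mathcal H^s(K\cap I)>0$.

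No rearrangement of the countable part repairs this, and neither does your fallback of attaching copies only at the start of $s$-blocks. Suppose $C$ is countable and $C\not\subseteq M$. Then $K\setminus M$ is a nonempty countable subset of $K$ that is relatively open, hence locally compact. By Baire's theorem it contains a point isolated in $K$. The statement forces $K$ to be perfect; the paper uses exactly this right after invoking the lemma (``Clearly, $K$ contains no isolated points''). More strongly, every relatively open piece of $K$ must itself carry the full triple $(s,r,t)$ and positive $\mathcal H^s$-measure.

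So whatever lifts the lower box dimension from $s$ to $r$ must be built from pieces that are themselves locally full. One option is to replace each point of each copy of $E$ by a tiny piece of the construction, at a scale aligned with the level scales $c_1\cdots c_k$ of $M$. Equivalently, run a single Moran-type construction in which every basic interval has, besides its $n_k$ main children, an $E$-like string of much smaller children that are again basic intervals of later levels. Step 3 then has to be redone. These decorations now contribute their own covering numbers at every finer scale, including their own $t$-like scales. You must show the total stays $\le\delta^{-r-\varepsilon}$ along your chosen $s$-scales and $\le\delta^{-t-\varepsilon}$ at all scales.

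Two smaller points on Step 3 as written:
\begin{itemize}
\item The factor $(\rho/\delta)^r$ grows, rather than decays, as $\rho$ increases. The dominant term at the end of an $s$-block is $N_{\rho_0}(M)\rho_0^{r}\delta^{-r}\approx\rho_0^{r-t}\delta^{-r}$, coming from the end $\rho_0$ of the preceding $t$-block. It is $\le\delta^{-r-\varepsilon}$ precisely because $\log(1/\delta)\ge\frac{t-r}{\varepsilon}\log(1/\rho_0)$, i.e.\ because the $s$-block is long. Your conclusion there is right, but for this reason, not the one you give.
\item If $t=1$, an exact local exponent $1$ in the $t$-blocks forces $n_kc_k=1$. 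That leaves no gaps in which to place the copies.
\end{itemize}
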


\begin{lemma}\cite{Dough}\label{lemDou}
Assume that $G_1$, $G_2$ are abelian Polish groups and
$\phi : G_1 \to G_2$ is a continuous onto homomorphism.
If $S \subseteq G_2$ is prevalent then so is $\phi^{-1}(S) \subseteq G_1$.
\end{lemma}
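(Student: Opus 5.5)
The statement is Dougherty's lemma, so I would simply reprove it from the definition of prevalence (Christensen shyness). Recall that a set $S$ in an abelian Polish group $G$ is prevalent when its complement is shy. This means there are a Borel set $E\supseteq G\setminus S$ and a Borel probability measure $\mu$ on $G$ with compact support such that $\mu(x+E)=0$ for every $x\in G$.

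Fix such a pair $(E,\mu)$ for $G_2\setminus S$. The set $\phi^{-1}(E)$ is Borel, since $\phi$ is continuous, and it contains $G_1\setminus\phi^{-1}(S)$. It therefore suffices to produce a compactly supported Borel probability measure $\nu$ on $G_1$ with
\[
\nu\bigl(y+\phi^{-1}(E)\bigr)=0 \quad\text{for all } y\in G_1 .
\]
The plan is to lift $\mu$ through a measurable section of $\phi$.

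\textbf{Step 1: a Borel section.} By the open mapping theorem for Polish groups, a continuous surjective homomorphism between Polish groups is open. Hence the closed-valued map $z\mapsto\phi^{-1}(z)$ is lower semicontinuous: for $U\subseteq G_1$ open, $\{z:\phi^{-1}(z)\cap U\neq\emptyset\}=\phi(U)$ is open. The Kuratowski--Ryll-Nardzewski selection theorem then gives a Borel map $\sigma:G_2\to G_1$ with $\phi\circ\sigma=\mathrm{id}$.

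\textbf{Step 2: compact support.} By Lusin's theorem, applied to the finite Borel measure $\mu$ on the Polish space $G_2$, there is a compact $K\subseteq G_2$ with $\mu(K)>0$ on which $\sigma$ is continuous. Set $\mu_K=\mu(\cdot\cap K)/\mu(K)$; it still satisfies $\mu_K(x+E)=0$ for every $x\in G_2$. Let $\nu=\sigma_*\mu_K$. This measure is supported on the compact set $\sigma(K)$.

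\textbf{Step 3: verification.} For any $y\in G_1$,
\[
\nu\bigl(y+\phi^{-1}(E)\bigr)=\mu_K\bigl(\{z: y+\sigma(z)\in\phi^{-1}(E)\}\bigr)=\mu_K\bigl(\{z:\phi(y)+z\in E\}\bigr)=\mu_K\bigl(E-\phi(y)\bigr).
\]
The middle equality uses $\phi(y+\sigma(z))=\phi(y)+z$. For the last term we need translation by $-\phi(y)$ rather than $+\phi(y)$, so apply the shyness property with $x=-\phi(y)$. Since $G_2$ is abelian, translates of $E$ by arbitrary elements are exactly the sets covered by that property. Hence the quantity is $0$, so $G_1\setminus\phi^{-1}(S)$ is shy and $\phi^{-1}(S)$ is prevalent.

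The main obstacle is Step 1, obtaining a section that is measurable enough. The facts that carry it are openness of $\phi$ (open mapping for Polish groups) and the selection theorem. Only after that does Lusin's theorem give continuity on a compact set of positive measure, which keeps the lifted measure compactly supported. If one prefers to avoid the selection theorem, an alternative is to note that $\phi$ induces a topological isomorphism $G_1/\ker\phi\cong G_2$ and to use a Borel transversal of the closed subgroup $\ker\phi$.
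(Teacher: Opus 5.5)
The paper gives no proof of this lemma. It is quoted from Dougherty and used only as a black box to obtain Lemma \ref{balk3}, so there is no internal argument to compare against. Your proof is correct and self-contained, and it is the standard argument for this fact. Openness of $\phi$ (open mapping theorem for Polish groups) together with Kuratowski--Ryll-Nardzewski gives a Borel section $\sigma$. Lusin's theorem makes $\sigma$ continuous on a compact $K$ with $\mu(K)>0$. Then $\nu=\sigma_*\mu_K$ is a compactly supported witness on $G_1$.

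There is one small slip in Step 3. In fact $\sigma^{-1}\bigl(y+\phi^{-1}(E)\bigr)=\{z:\sigma(z)-y\in\phi^{-1}(E)\}=\phi(y)+E$, not $E-\phi(y)$. This is harmless, since every translate of $E$ is $\mu_K$-null. Your remark about needing translation by $-\phi(y)$ is just an artifact of the slip.

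A cleaner verification avoids computing with $\sigma$ inside the set. Note that $\phi_*\nu=(\phi\circ\sigma)_*\mu_K=\mu_K$. Also $y+\phi^{-1}(E)=\phi^{-1}\bigl(\phi(y)+E\bigr)$, because $\phi$ is a homomorphism. Hence $\nu\bigl(y+\phi^{-1}(E)\bigr)=\mu_K\bigl(\phi(y)+E\bigr)=0$. This also shows that any compactly supported Borel probability measure $\nu$ on $G_1$ with $\phi_*\nu=\mu_K$ would do; the section is just one way to produce such a lift.

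Finally, if one uses Christensen's definition with universally measurable rather than Borel $E$, your argument goes through unchanged. Continuous preimages of universally measurable sets are universally measurable.
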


By applying Lemma \ref{lemDou} and Tietze's extension theorem, we obtain the following lemma \cite[Corollary 2.7]{Bal}.

\begin{lemma}\cite{Bal}\label{balk3}
Assume that $K_1 \subseteq K_2$ are compact metric spaces.
Let
\[
R : C(K_2) \to C(K_1), \quad R(f) = f|_{K_1}.
\]
If $A \subseteq C(K_1)$ is prevalent then $R^{-1}(A) \subseteq C(K_2)$ is prevalent, too.
\end{lemma}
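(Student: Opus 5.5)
The plan is to obtain Lemma \ref{balk3} as a direct instance of Lemma \ref{lemDou}. I take $G_1 = C(K_2)$, $G_2 = C(K_1)$, $\phi = R$ and $S = A$. Since $R^{-1}(A) = \phi^{-1}(S)$, the conclusion follows once the hypotheses of Lemma \ref{lemDou} are checked.

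First, $C(K_1)$ and $C(K_2)$, with the maximum norm and addition, are abelian Polish groups.
\begin{itemize}
\item Each $K_i$ is a compact metric space, so $C(K_i)$ is a Banach space, hence complete metrizable and an abelian topological group under addition.
\item $C(K_i)$ is separable. This follows from the Stone--Weierstrass theorem: using a countable dense set $\{x_n\}\subseteq K_i$, the functions $d(\cdot,x_n)$ separate points. The rational polynomial algebra generated by them together with the constants is a countable dense subset.
\end{itemize}

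Next, $R$ is a continuous homomorphism.
\begin{itemize}
\item It is linear, in particular additive, since $(f+g)|_{K_1} = f|_{K_1}+g|_{K_1}$.
\item It satisfies $\|R f\|_{\infty} \le \|f\|_{\infty}$, so it is $1$-Lipschitz and hence continuous.
\end{itemize}

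The only step with any content is surjectivity, and I would invoke Tietze's extension theorem here. $K_1$ is compact, hence closed, in the metric (so normal) space $K_2$. Given $h\in C(K_1)$, Tietze provides $f\in C(K_2)$ with $f|_{K_1}=h$, so $R(f)=h$ and $R$ is onto.

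Lemma \ref{lemDou} now gives that $\phi^{-1}(S)=R^{-1}(A)$ is prevalent in $C(K_2)$. I expect no real obstacle; the only point needing care is that surjectivity uses the closedness of $K_1$ in $K_2$.
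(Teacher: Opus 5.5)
Your proposal is correct and follows the same route the paper indicates: apply Lemma \ref{lemDou} to the restriction map $R$, with Tietze's extension theorem supplying surjectivity. Your other checks (that each $C(K_i)$ is a separable Banach space and hence an abelian Polish group, that $R$ is continuous and additive, and that $K_1$ is closed in $K_2$) are the routine hypotheses the paper leaves implicit.
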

\begin{lemma}\cite{BDE,BH}\label{balk1}
Let $K\subseteq\mathbb{R}^d$ be an uncountable compact set. Then for a prevalent $f\in C(K)$, we have
\[
{\dim}_HG_f(K)={\dim}_HK+1.
\]
\end{lemma}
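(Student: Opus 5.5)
The plan splits into an upper bound, which holds for every $f\in C(K)$, and a lower bound, which holds prevalently and is proved with a Gaussian probe and a potential-theoretic energy estimate.

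\emph{Upper bound for every $f$.} Since $G_f(K)\subseteq K\times[-\|f\|_\infty,\|f\|_\infty]$, the product inequality $\dim_H(E\times F)\le \dim_H E+\overline{\dim}_B F$ gives $\dim_H G_f(K)\le \dim_H K+1$ for all $f\in C(K)$. So only the lower bound $\dim_H G_f(K)\ge \dim_H K+1$ needs to be shown for a prevalent $f$.

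\emph{Reduction to countably many statements.} A countable intersection of prevalent sets is prevalent. It therefore suffices to fix rational $0<\sigma<\dim_H K$ (when $\dim_H K>0$) and rational $\alpha\in(0,1)$, and to show that
$$\{f\in C(K):\dim_H G_f(K)\ge 1+\sigma-\alpha\}$$
is prevalent. Dimension is monotone in $K$: if $K'\subseteq K$, then $G_f(K')\subseteq G_f(K)$. Lemma \ref{balk3} transfers prevalence from $C(K')$ to $C(K)$. Hence I may replace $K$ by a compact $K'\subseteq K$ which carries, by Frostman's lemma, a Borel probability measure $\mu$ with finite $\sigma$-energy $\iint|x-y|^{-\sigma}\,d\mu(x)d\mu(y)<\infty$.

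\emph{The probe and the energy estimate.} As a transverse measure I take the law $P$ on $C(K')$ of a centered Gaussian field $X$ with $\mathbb{E}|X(x)-X(y)|^2=|x-y|^{2\alpha}$, for instance the restriction of an $\alpha$-fractional Brownian field, which has continuous paths. The set in question is Borel (or at least universally measurable), since $\dim_H$ of graphs is a measurable function of $f$. It then suffices to show, for every fixed $f\in C(K')$, that almost surely $\dim_H G_{f+X}(K')\ge 1+\sigma-\alpha$. By the mass distribution/energy principle, push $\mu$ to the graph via $x\mapsto(x,f(x)+X(x))$ and estimate the $\tau$-energy, for $1<\tau<1+\sigma-\alpha$, by
$$\mathbb{E}\iint\bigl(|x-y|^2+|f(x)-f(y)+X(x)-X(y)|^2\bigr)^{-\tau/2}d\mu\,d\mu.$$
The key one-dimensional estimate concerns a Gaussian $a+sZ$ with arbitrary mean $a$: one has $\mathbb{E}\bigl(r^2+(a+sZ)^2\bigr)^{-\tau/2}\le C s^{-1}r^{1-\tau}$, because the Gaussian density is bounded by $Cs^{-1}$ and $\int_{\mathbb{R}}(r^2+u^2)^{-\tau/2}du=Cr^{1-\tau}$. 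Crucially, this bound is uniform in $a$, so the fixed $f$ plays no role. The integrand is therefore bounded by $C|x-y|^{1-\tau-\alpha}$, which is $\mu\times\mu$-integrable because $\tau-1+\alpha<\sigma$. Fubini then gives a finite energy almost surely.

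\emph{The case $\dim_H K=0$ (main obstacle).} Here the target is $\dim_H G_f(K)\ge1$, and the power-law probe above only gives exponents below $1$. I would use uncountability instead. $K$ contains a Cantor set carrying a non-atomic probability measure $\mu$, so $\mu\times\mu$ of the diagonal is $0$. For $t<1$, I choose a modulus $\rho(r)\downarrow0$ slowly enough that $\iint\rho(|x-y|)^{-t}\,d\mu\,d\mu<\infty$; such a $\rho$ exists since $\mu\times\mu(\{|x-y|<r\})\to0$. I then take a Gaussian probe with increment variance $\rho(|x-y|)^2$, and use $\mathbb{E}|a+sZ|^{-t}\le Cs^{-t}$ uniformly in $a$. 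This shows that the projection of the graph, namely $(f+X)(K)$, carries a measure of finite $t$-energy almost surely, hence $\dim_H G_{f+X}(K)\ge t$. Letting $t\uparrow1$ along a countable sequence finishes the argument. The delicate points are constructing a Gaussian field with this prescribed slowly varying modulus and continuous paths on $K$ (a Dudley-type entropy condition), and checking measurability of the prevalent set.
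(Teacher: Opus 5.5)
\emph{Where the proposal stands.} The paper does not prove this lemma; it quotes it from \cite{BH}, which treats $\dim_H K>0$, and from \cite{BDE}, which covers all uncountable compact $K$. Your upper bound and your argument for $\dim_H K>0$ are correct. They are essentially the fractional Brownian motion argument of \cite{BH}; you only need to restrict to pairs with $\alpha<\sigma$, so that some $\tau\in(1,1+\sigma-\alpha)$ exists. Measurability is also not a real issue: $f\mapsto G_f(K)$ is continuous into the compact sets with the Hausdorff metric, and $\dim_H$ is Borel there.

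\emph{The gap.} It is the case $\dim_H K=0$, which is exactly what \cite{BDE} is cited for. There you never construct the probe, and as you describe it, it need not exist.

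First, a centered Gaussian field with $\mathbb{E}|X(x)-X(y)|^2=\rho(|x-y|)^2$ exists only if $\rho(|x-y|)^2$ is conditionally negative definite on $K$. Your $\rho$ is chosen only to satisfy an integrability condition, so this is not guaranteed.

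More seriously, choosing $\rho$ ``slowly enough'' works against path continuity. Put $F(r)=(\mu\times\mu)\{|x-y|<r\}$. Since $\rho$ is increasing,
\[
\rho(r)^{-t}F(r)\le\iint_{\{0<|x-y|<r\}}\rho(|x-y|)^{-t}\,d\mu\,d\mu\to 0,
\]
so your energy condition forces $\rho(r)\ge F(r)^{1/t}$ for small $r$. Nothing in your plan stops $\mu$ from being nearly atomic at every scale, and then $F$ can decay as slowly as $1/\log\log(1/r)$. The condition $\dim_H K=0$ also allows $\operatorname{supp}\mu$ to have positive upper box dimension. In that case Sudakov's minoration, $\mathbb{E}\sup X\ge c\,\rho(r)\sqrt{\log N_{2r}(\operatorname{supp}\mu)}$, tends to infinity along suitable $r\to0$. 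So $X$ is a.s.\ unbounded on $\operatorname{supp}\mu$, and this construction gives no probability measure on $C(K)$ at all.

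\emph{How to close it.} The missing idea is to choose the Cantor set, the measure and the field together, all adapted to one tree structure.
\begin{enumerate}
\item Take $C\subseteq K$ homeomorphic to $\{0,1\}^{\mathbb{N}}$. Write $x|k$ for the first $k$ coordinates of $x\in C$, and let $\mu$ be the coin-tossing measure.
\item Set $X(x)=\sum_{k\ge1}2^{-k/2}\xi_{x|k}$, where the $\xi_w$ are i.i.d.\ standard Gaussians indexed by finite words. The partial sums are locally constant, hence continuous on $C$. The series converges uniformly a.s., because $\max_{|w|=k}|\xi_w|\le 2\sqrt{k}$ eventually.
\item If $x,y$ first differ at coordinate $n+1$, then $X(x)-X(y)$ is centered Gaussian with variance $2^{1-n}$. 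Such pairs have $\mu\times\mu$-measure $2^{-n-1}$.
\item Your bound $\mathbb{E}|a+sZ|^{-t}\le C_t s^{-t}$ then gives
\[
\mathbb{E}\iint|f(x)-f(y)+X(x)-X(y)|^{-t}\,d\mu\,d\mu\le C_t\sum_{n\ge0}2^{-n}2^{nt/2}<\infty
\]
for every $t<1$, uniformly in $f$.
\item Lemma \ref{balk3} transfers prevalence from $C(C)$ to $C(K)$.
\end{enumerate}
A finite-dimensional probe also works. Let $(h_1,h_2)\colon C\to[0,1]^2$ be a continuous surjection. The compact set $\{(h_1(x),h_2(x),f(x)):x\in C\}$ has dimension at least $2$. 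By the Marstrand--Mattila projection theorem, $(f+\lambda_1h_1+\lambda_2h_2)(C)$ therefore has positive length for a.e.\ $(\lambda_1,\lambda_2)$.
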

\begin{lemma}\cite{Bal}\label{balk2}
Let $K\subseteq\mathbb{R}^d$ be an uncountable compact set with at most finitely many isolated points. Then for a prevalent $f\in C(K)$, we have
\[
\overline{{\dim}}_BG_f(K)=\overline{{\dim}}_BK+1
\]
and
\[
\underline{{\dim}}_BG_f(K)=\underline{{\dim}}_BK+1.
\]
\end{lemma}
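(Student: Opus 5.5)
The upper bounds $\overline{\dim}_B G_f(K)\le \overline{\dim}_B K+1$ and $\underline{\dim}_B G_f(K)\le \underline{\dim}_B K+1$ hold for \emph{every} $f\in C(K)$. For $K\subseteq[0,1]$ this follows from Lemma \ref{lemLL}. In general it follows from the elementary count: cover $K$ by $N_\delta(K)$ sets of diameter $\le\delta$; over each such set the graph of the bounded function $f$ is covered by $O(\|f\|_\infty/\delta)$ boxes of side $\delta$. So the whole task is the lower bounds for a prevalent $f$.

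\textbf{Reduction to a probe estimate.} I would exhibit a compactly supported Borel probability measure $\mu$ on $C(K)$ with the following property: for every $f\in C(K)$ and $\mu$-a.e. $g$, the graph of $f+g$ has the maximal lower and upper box dimensions. For $\mu$ I take the law of a random series
\[
g=\sum_{k}\sum_{I\in\mathcal D_k} 2^{-\varepsilon k}\,\xi_I\,\varphi_I ,
\]
where $\mathcal D_k$ is a selection of dyadic cubes of side $2^{-k}$ meeting $K$, $\varphi_I$ is a bump of height $1$ adapted to $K\cap I$, and $\xi_I$ are i.i.d.\ uniform on $[-1,1]$. The series converges uniformly, so $\mu$ is compactly supported. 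Letting $\varepsilon\downarrow0$ along a sequence and intersecting countably many prevalent sets gives the exact value.

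\textbf{Counting boxes.} For the graph of $h=f+g$ at scale $\delta=2^{-k}$ I use
\[
N_\delta(G_h(K))\gtrsim \sum_{I\in\mathcal D_k}\max\Bigl\{1,\ \tfrac{\operatorname{osc}_{K\cap I}h}{\delta}\Bigr\},
\]
so it suffices to show that a.s., for all large $k$, a proportion bounded below of the cubes $I$ meeting $K$ satisfy $\operatorname{osc}_{K\cap I}h\ge \delta^{\varepsilon'}$. For a fixed $I$, the oscillation of $f+g$ on $K\cap I$ is at least the oscillation of a single random term $2^{-\varepsilon k}\xi_I(\varphi_I(x)-\varphi_I(y))$ for two suitable points $x,y\in K\cap I$, minus a contribution that does not depend on $\xi_I$. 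Since $\xi_I$ has a bounded density, the probability that the sum lands in an interval of length $\delta^{\varepsilon'}$ is $O(\delta^{\varepsilon'-\varepsilon})$, whatever $f$ is. Independence across cubes, a Chernoff bound and Borel--Cantelli over $k$ then give the estimate for all large $k$ simultaneously. This yields $N_\delta(G_h)\gtrsim N_\delta(K)\,\delta^{-1+\varepsilon'}$ along every scale, which gives both the $\liminf$ and the $\limsup$ statements.

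\textbf{Main obstacle.} The difficulty is that $K\cap I$ may have diameter much smaller than $\delta$, or even be a single point. In that case no bump of height $\sim1$ supported in $I$ can make $\operatorname{osc}_{K\cap I}h$ large. This is exactly where the hypothesis of finitely many isolated points enters. After discarding finitely many cubes, every $x\in K$ is an accumulation point, so each $I$ contains two distinct points of $K$. The fix I would try first is to replace the plain count by a multiscale one. For each $k$, let $\mathcal D_k'$ be the cubes $I$ with $\operatorname{diam}(K\cap I)\ge \delta^{1+\eta}$, and show that $\#\mathcal D'_k\ge N_\delta(K)\,\delta^{C\eta}$ for all large $k$ or along the relevant scales. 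Cubes where $K$ is very thin contribute through their descendants at scale $\delta^{1+\eta}$, where they are counted anyway. Adapting $\varphi_I$ to two points at distance $\ge\delta^{1+\eta}$ then gives the oscillation bound with an $\eta$-loss, and letting $\eta,\varepsilon\to0$ finishes.

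A technical point to check is that this selection preserves the lower box dimension, and not only the upper one; this requires comparing $N_\delta$ and $N_{\delta^{1+\eta}}$ at every scale rather than along a subsequence. I expect this step to be the main obstacle.
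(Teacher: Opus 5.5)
The paper gives no proof of this lemma; it imports it from Balka \cite{Bal}. Your upper bounds are correct, and so is the general framework: a compactly supported probe measure $\mu$ such that $f+g$ is good for every $f$ and $\mu$-a.e.\ $g$. The lower-bound mechanism, however, has a genuine gap. The estimate
\[
N_\delta(G_h(K))\gtrsim\sum_{I\in\mathcal D_k}\max\Bigl\{1,\ \tfrac{\operatorname{osc}_{K\cap I}h}{\delta}\Bigr\}
\]
is the estimate for graphs over intervals, and it relies on the graph over each column being connected. For a general compact $K$ what holds is $2^d N_\delta(G_h(K))\ge\sum_I N_\delta\bigl(h(K\cap I)\bigr)$. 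The covering number of the image $h(K\cap I)$ is unrelated to its oscillation when $K\cap I$ is disconnected. For example, let $K$ be the middle-third Cantor set, and let the $2^k$ basic intervals of length $\delta=3^{-k}$ play the role of the cubes. Let $h$ be $0$ on the left half and $1$ on the right half of each $K\cap I$. Then $h$ is continuous and $\operatorname{osc}_{K\cap I}h=1$ for every $I$, so your right-hand side is of order $N_\delta(K)/\delta$. Yet $G_h(K)\subseteq K\times\{0,1\}$, so $N_\delta(G_h(K))\le 2N_\delta(K)$. Consequently, forcing $\operatorname{osc}_{K\cap I}(f+g)\ge\delta^{\varepsilon'}$ gives no lower bound beyond $N_\delta(K)$. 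This holds whether you use one random bump per cube or your multiscale variant with two points at distance $\ge\delta^{1+\eta}$. Two points of $K$ in a cube contribute at most two boxes, and that is all your use of the isolated-points hypothesis provides.

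The missing idea is that, for most cubes $I$, the \emph{image} $h(K\cap I)$ must meet about $\delta^{-1+o(1)}$ distinct intervals of length $\delta$, uniformly in $f$. For this you need, in each cube (enlarged by its neighbours to avoid boundary effects), about $2^k$ points of $K$ pairwise separated at some much finer scale $\rho_k$. Each of these points should carry its own height-one bump with an independent coefficient of size $a_k=2^{-o(k)}$, e.g.\ $a_k=k^{-2}$. Conditioning on all other coefficients gives $\mathbb{P}(|h(x_i)-h(x_j)|<\delta)\le\delta/a_k$ for $i\neq j$, whatever $f$ is. If $M$ values have $P$ ordered pairs closer than $\delta$, they occupy at least $M^2/(M+P)$ bins of length $\delta$. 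Combining this with Markov's inequality and Borel--Cantelli gives $N_{2^{-k}}(G_h(K))\gtrsim N_{2^{-k}}(K)\,2^k k^{-C}$ for all large $k$, and hence both lower bounds at once. This is where the hypothesis really enters: outside finitely many isolated points, every cube meeting $K$ has infinitely many points of $K$ nearby, so such families exist at some $\rho_k>0$.

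However, $\rho_k$ may lie below any fixed power of $2^{-k}$, so your schedule $2^{-\varepsilon k}$, tied to the metric scale, fails in general. With bumps of bounded overlap and Lipschitz constant polynomial in $2^k$, every $g$ in the support of your $\mu$ is uniformly $\alpha$-H\"older for some $\alpha(\varepsilon)>0$. Hence $N_\delta(G_g(K))\le N_{c\delta^{1/\alpha}}(K)$. For a central Cantor set whose $2^n$ stage-$n$ intervals have length $4^{-n^2}$, this is $\delta^{-o(1)}$, whereas the lemma asserts graph dimension $1$. So already $f=0$ shows your $\mu$ does not witness the prevalence.
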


\begin{definition} \cite{BBF}
We say that a function $g:\mathbb{R}\to\mathbb{R}$ is \emph{exponential-like (of range $m$)} whenever $g$ is given by
\[
g(x)=\sum_{i=1}^m a_i e^{\beta_i x}
\]
for some distinct non-zero real numbers $\beta_1,\dots,\beta_m$ and some non-zero real numbers $a_1,\dots,a_m$.
\end{definition}

\begin{lemma}\label{lemexp1}\cite{BBF}
For every $m\in\mathbb{N}$, every exponential-like function $g$ of range $m$ and every $c\in\mathbb{R}$, the level set $g^{-1}(\{c\})$ has at most $m$ elements and there exists a finite decomposition of $\mathbb{R}$ into intervals such that $g$ is strictly monotone in each of them.
\end{lemma}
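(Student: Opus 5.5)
The plan is to prove first a slightly sharper auxiliary claim by induction on the number of terms. The claim: if $h(x)=\sum_{i=1}^{k} c_i e^{\gamma_i x}$ with distinct real exponents $\gamma_1,\dots,\gamma_k$ (now $0$ is allowed as an exponent) and not all $c_i$ zero, then $h$ has at most $k-1$ real zeros. Both parts of Lemma \ref{lemexp1} then follow by applying this claim to suitable auxiliary functions.

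For the claim, I would first discard the terms with $c_i=0$, which only lowers $k$. For $k=1$, $h=c_1e^{\gamma_1x}$ never vanishes. For the induction step, multiply by the positive function $e^{-\gamma_1 x}$; this does not change the zero set and gives
\[
\tilde h(x)=c_1+\sum_{i=2}^k c_i e^{(\gamma_i-\gamma_1)x}.
\]
Suppose $\tilde h$ had $k$ or more zeros. By Rolle's theorem, $\tilde h'(x)=\sum_{i=2}^k c_i(\gamma_i-\gamma_1)e^{(\gamma_i-\gamma_1)x}$ would have at least $k-1$ zeros. But $\tilde h'$ is an exponential sum with $k-1$ distinct exponents and nonzero coefficients $c_i(\gamma_i-\gamma_1)$, and at least one term is present when $k\ge 2$. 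The induction hypothesis then bounds its zeros by $k-2$, a contradiction.

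For the level set statement, fix $c$ and apply the claim to $h=g-c=\sum_{i=1}^m a_ie^{\beta_ix}-c\,e^{0\cdot x}$. Since every $\beta_i\neq 0$, the exponents $\beta_1,\dots,\beta_m,0$ are distinct, so $h$ has at most $m+1$ terms. Its coefficients are not all zero because each $a_i\neq0$. Hence $h$ has at most $m$ zeros, i.e.\ $|g^{-1}(\{c\})|\le m$. If $c=0$, the bound is even $m-1$.

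For monotonicity, note that $g'(x)=\sum_{i=1}^m a_i\beta_ie^{\beta_ix}$ has the same distinct exponents and nonzero coefficients $a_i\beta_i$. By the claim, $g'$ has at most $m-1$ zeros $x_1<\dots<x_j$. These points split $\mathbb{R}$ into at most $m$ intervals. On each open interval between consecutive zeros, $g'$ is continuous and nonvanishing, so it has constant sign, and $g$ is strictly monotone there. Strict monotonicity extends to the closed intervals (or half-open ones, to get a genuine partition), since $g$ is continuous and a single zero of $g'$ at an endpoint does not break strict monotonicity.

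The only delicate point is the bookkeeping in the induction. One has to make sure that multiplying by $e^{-\gamma_1x}$ and differentiating removes exactly one term while keeping the remaining exponents distinct and the coefficients nonzero. This is where the hypotheses that the $\beta_i$ are distinct and nonzero are used.
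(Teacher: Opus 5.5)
Your proof is correct and complete. The Rolle-type induction establishes the auxiliary claim that a sum of exponentials with $k$ distinct exponents, not all coefficients zero, has at most $k-1$ real zeros. Both parts of the lemma then follow as you describe, because the exponents $\beta_1,\dots,\beta_m,0$ are distinct and the coefficients $a_i\beta_i$ are nonzero.

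The paper gives no proof of this lemma and only cites \cite{BBF}, so there is no argument of the paper's to compare with. Your argument is the standard Rolle-based one for such zero bounds.

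One small economy: the level-set bound also follows from your monotone decomposition. Once $g'$ has at most $m-1$ zeros, $\mathbb{R}$ splits into at most $m$ intervals on which $g$ is strictly monotone, and each meets $g^{-1}(\{c\})$ in at most one point. So the separate application of the claim to $g-c$ is not needed.
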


\begin{lemma}\label{lemexp2}\cite{BBF}
Let $\mathcal{F}$ be a family of functions from $[0,1]$ to $\mathbb{R}$ and assume that there exists an $F\in\mathcal{F}$ such that $g\circ F\in\mathcal{F}\setminus\{0\}$ for any exponential-like function $g$. Then $\mathcal{F}$ is strongly $\mathfrak{c}$-algebrable. In particular, if $H$ is a Hamel basis of $\mathbb{R}$, then
\[
\{\exp(rF):r\in H\}
\]
is a system of generators of a free algebra contained in $\mathcal{F}\cup\{0\}$.
\end{lemma}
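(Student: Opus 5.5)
The plan is to verify directly that the set $Z=\{\exp(rF): r\in H\}$, with $H$ a Hamel basis of $\mathbb{R}$ over $\mathbb{Q}$, is a set of free generators whose generated algebra lies in $\mathcal{F}\cup\{0\}$. Since $|H|=\mathfrak{c}$, this gives strong $\mathfrak{c}$-algebrability. The only inputs are the hypothesis on $F$ and the $\mathbb{Q}$-linear independence of $H$.

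First I compute the monomials in $\hat{Z}$. Take distinct $r_1,\dots,r_n\in H$ and nonnegative integers $k_1,\dots,k_n$, not all zero. Then
\[
\exp(r_1F)^{k_1}\cdots\exp(r_nF)^{k_n}=\exp(\beta F),\qquad \beta=\sum_{i=1}^n k_ir_i .
\]
Because $H$ is $\mathbb{Q}$-linearly independent and not all $k_i$ vanish, $\beta\neq 0$. For the same reason, two distinct monomials (distinct multi-index data) produce distinct exponents $\beta$. So the map from formal monomials to exponents $\beta$ is injective with values in $\mathbb{R}\setminus\{0\}$.

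Next, take an arbitrary finite linear combination $h=\sum_{j=1}^m a_j M_j$ of pairwise distinct monomials $M_j\in\hat Z$, with all $a_j\neq 0$ (zero coefficients are simply discarded) and $m\ge 1$. By the previous step, $M_j=\exp(\beta_jF)$ with $\beta_1,\dots,\beta_m$ distinct and nonzero. Hence $h=g\circ F$, where
\[
g(x)=\sum_{j=1}^m a_je^{\beta_jx}
\]
is exponential-like of range $m$. By hypothesis, $g\circ F\in\mathcal{F}\setminus\{0\}$.

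This single observation gives both required properties at once. Since every nontrivial combination is nonzero, $\hat Z$ is linearly independent. Since every such combination lies in $\mathcal{F}$, we get $\operatorname{span}\hat Z\subseteq\mathcal{F}\cup\{0\}$. In particular each generator $\exp(rF)$, a range-one case, lies in $\mathcal{F}$, and the generators are pairwise distinct. So $Z$ freely generates a subalgebra contained in $\mathcal{F}\cup\{0\}$, and $|Z|=|H|=\mathfrak{c}$.

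The only point needing care is the injectivity of multi-indices to exponents, together with $\beta\neq0$. This is exactly where the Hamel basis is essential: it is what guarantees that $g$ is a genuine exponential-like function with distinct nonzero frequencies. Everything else is bookkeeping.
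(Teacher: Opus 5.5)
Your proposal is correct and matches the standard proof in \cite{BBF}, which the paper cites rather than reproves. Any nontrivial combination of distinct monomials in the $\exp(r_iF)$, $r_i\in H$, equals $g\circ F$ with $g(x)=\sum_j a_je^{\beta_jx}$. The $\mathbb{Q}$-linear independence of $H$ makes the exponents $\beta_j$ distinct and nonzero, so $g$ is exponential-like. The hypothesis then gives both the linear independence of $\hat Z$ and $\operatorname{span}\hat Z\subseteq\mathcal{F}\cup\{0\}$.
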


\begin{definition} A sequence $(e_n)_{n\geq 1}$ in a Banach space $X$ is called a (Schauder) basic sequence if it constitutes a Schauder basis for $\overline{\operatorname{span}}(e_n)_{n\geq 1}$.
\end{definition}
\begin{lemma}\cite{LS}\label{lembasic}
Let $X$ be a Banach space, and let $(e_n)_{n \geq 1}$ be a basic sequence. Suppose that $\{x_i\}^m_{i=1} \subseteq X$ and  $\{\overline{x}_i = x_i + X_0: i=1,\dots,m\}$ is linearly independent in the quotient space $X / X_0$. Then the sequence $S = \{x_1,\dots, x_m, e_1, e_2, \dots\}$ is a basic sequence in $X$.
\end{lemma}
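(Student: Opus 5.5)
The plan is to verify the Grunblum criterion for $S$. A sequence $(s_k)$ of nonzero vectors is basic if and only if there is a constant $K\ge 1$ such that
\[
\Bigl\|\sum_{k=1}^{p}a_ks_k\Bigr\|\le K\Bigl\|\sum_{k=1}^{q}a_ks_k\Bigr\|
\]
for all $p\le q$ and all scalars $a_1,\dots,a_q$. Throughout, $X_0$ is understood to be $\overline{\operatorname{span}}(e_n)_{n\ge1}$, and $K_e$ denotes the basis constant of $(e_n)$.

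\textbf{Step 1: the finite-dimensional part splits off boundedly.} Put $Y=\operatorname{span}\{x_1,\dots,x_m\}$ and let $q:X\to X/X_0$ be the quotient map.
\begin{itemize}
\item Linear independence of the classes $\overline{x}_i$ shows that the $x_i$ are linearly independent, hence nonzero.
\item It also shows that $q|_Y$ is injective, so $Y\cap X_0=\{0\}$.
\item Since $Y$ is finite-dimensional, $q|_Y$ is an isomorphism onto its image. So there is $C>0$ with $\|y\|\le C\|q(y)\|$ for all $y\in Y$.
\item For $y\in Y$ and $z\in X_0$ we have $\|q(y)\|=\|q(y+z)\|\le\|y+z\|$. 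This gives $\|y\|\le C\|y+z\|$, and then $\|z\|\le (1+C)\|y+z\|$.
\end{itemize}
This is the key estimate: the decomposition $Y\oplus X_0$ is topologically direct. Moreover, on the finite-dimensional space $Y$ the coordinate functionals with respect to $x_1,\dots,x_m$ are bounded. Hence there is $D\ge1$ with $\|\sum_{i\le p}a_ix_i\|\le D\|\sum_{i\le m}a_ix_i\|$ for every $p\le m$.

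\textbf{Step 2: the Grunblum estimate.} Fix $p\le q$. Write the full sum as $y+z$, where $y=\sum_{i\le m}a_ix_i$ and $z=\sum_{n\le q-m}b_ne_n$ (with $z=0$ if $q\le m$).
\begin{itemize}
\item If $p\le m$, the partial sum lies in $Y$. By Step 1 its norm is at most $D\|y\|\le DC\|y+z\|$.
\item If $p>m$, the partial sum equals $y+\sum_{n\le p-m}b_ne_n$. Its norm is at most $\|y\|+K_e\|z\|\le\bigl(C+K_e(1+C)\bigr)\|y+z\|$.
\end{itemize}
Thus $K=\max\{DC,\;C+K_e(1+C)\}$ works, and $S$ is a basic sequence in $X$.

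The main obstacle is Step 1, namely obtaining the uniform bound $\|y\|\le C\|y+z\|$ for all $z\in X_0$. Independence modulo $X_0$ alone is only an algebraic statement. The point is that finite-dimensionality of $Y$ upgrades injectivity of $q|_Y$ to a bounded inverse, and that the quotient norm is dominated by $\|y+z\|$. Once this is in place, the rest is the routine case split above.
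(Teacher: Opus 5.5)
Your proof is correct and complete. The paper gives no argument of its own for this lemma: it is quoted from \cite{LS}, and it leaves $X_0=\overline{\operatorname{span}}(e_n)_{n\ge1}$ implicit, which is exactly how you read it. Your route is the standard one. Finite-dimensionality turns injectivity of $q|_Y$ into the bound $\|y\|\le C\|y+z\|$, so $Y\oplus X_0$ is a topologically direct sum, and the two-case Grunblum estimate follows. The only bookkeeping you leave implicit is harmless: pad with $a_i=0$ for $q<i\le m$ when $q<m$, and note that the $e_n$ are nonzero because they form a basic sequence, as the criterion requires.
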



\section{Strong algebrability and spaceability of functions}

In this section, we prove that $H_s[0,1] \cap \underline{B}_r[0,1] \cap \overline{B}_t[0,1]$
is strongly $\mathfrak{c}$-algebrable (Theorem \ref{thmn1}) and spaceable (Theorem \ref{thmn2}) for any $1< s< r< t\leq2$. For this purpose, we need the following proposition.
\begin{proposition}\label{prop1}
Let $s$, $r$, $t$ be real numbers with $1 < s < r < t \leq 2$. Then there exists a compact set $K \subseteq [0,1]$ and $\widetilde{g} \in H_s[0,1] \cap\underline{B}_r[0,1] \cap \overline{B}_t[0,1]$ such that
\begin{equation}\label{equprop1}
\bigl( \dim_H K \cap I, \, \underline{\dim}_B K \cap I, \, \overline{\dim}_B K \cap I \bigr) = (s-1, r-1, t-1)
\end{equation}
and
\[
\bigl( \dim_H G_{\widetilde{g}}(I), \, \underline{\dim}_B G_{\widetilde{g}}(I), \, \overline{\dim}_B G_{\widetilde{g}}(I) \bigr) = (s, r, t)
\]
for any open interval $I \subseteq [0,1]$ with $I \cap K \neq \emptyset$.
\end{proposition}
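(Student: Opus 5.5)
Take $K$ from Lemma \ref{Ni-W} applied to the triple $(s-1,r-1,t-1)\in[0,1]^3$. This gives (\ref{equprop1}) directly and $\mathcal{H}^{s-1}(K\cap I)>0$ whenever $K\cap I\neq\emptyset$. In particular every such $K\cap I$ is uncountable. We then build $\widetilde g$ as the linear extension of a suitable $g\in C(K)$ and check the three graph dimensions on each open interval $I$ meeting $K$.

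\textbf{Choosing $g$.} Fix a countable base $\{I_n\}$ of open intervals with rational endpoints such that $K\cap I_n\neq\emptyset$. Put $K_n=K\cap\overline{I_n}$. Each $K_n$ is compact. It is uncountable, since $\mathcal H^{s-1}(K\cap I_n)>0$. Its box dimensions agree with those of $K\cap I_n$ up to adding finitely many points, so they equal $r-1$ and $t-1$. To apply Lemma \ref{balk2} we need at most finitely many isolated points. The positivity of $\mathcal H^{s-1}$ on every relatively open piece of $K$ shows that $K$ has no isolated points, and $K_n$ can acquire at most the two endpoints of $\overline{I_n}$ as isolated points. By Lemmas \ref{balk1} and \ref{balk2}, for a prevalent $f\in C(K_n)$ we have
\[
\dim_H G_f(K_n)=s,\qquad \underline{\dim}_B G_f(K_n)=r,\qquad \overline{\dim}_B G_f(K_n)=t.
\]
By Lemma \ref{balk3}, pulling back along the restriction $C(K)\to C(K_n)$ keeps these sets prevalent in $C(K)$. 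A countable intersection of prevalent sets is prevalent, hence nonempty, so we may choose $g\in C(K)$ with these three values on every $K_n$.

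\textbf{Upper bounds.} Let $\widetilde g$ be the linear extension of $g$ to $[0,1]$. Take an open interval $I$ with $I\cap K\neq\emptyset$, and pick $n$ with $I_n\subseteq I$. The restriction of $\widetilde g$ to $\overline I\cap K$ is $g$ there. On each gap of $\overline I\cap K$ inside $\overline I$, the function $\widetilde g$ is affine, except possibly on the two boundary gaps, where it is also affine. So $\widetilde g|_{\overline I}$ is, up to an affine change of variables, the linear extension of $g|_{K\cap\overline I}$. Lemma \ref{lemLL} then gives
\[
\overline{\dim}_B G_{\widetilde g}(I)\le \overline{\dim}_B (K\cap\overline I)+1=t,
\]
and likewise $\underline{\dim}_B G_{\widetilde g}(I)\le r$. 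Lemma \ref{lemLW}, applied after rescaling $\overline I$ to $[0,1]$, gives $\dim_H G_{\widetilde g}(I)\le\max\{\dim_H G_g(K\cap \overline I),1\}$. This is at most $s$ because $\dim_H G_g(K\cap\overline I)\le \dim_H(K\cap\overline I)+1=s$; the last bound uses that $g$ is defined on a set of Hausdorff dimension $s-1$. Alternatively, it follows from $\dim_H\le\underline{\dim}_B$ applied to a countable union of pieces $K_m$.

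\textbf{Lower bounds and the main obstacle.} For the lower bounds, monotonicity gives $G_{\widetilde g}(I)\supseteq G_g(K_n)$, so all three dimensions are at least $(s,r,t)$. Hence equality holds on every such $I$. Taking $I=(0,1)$ yields $\widetilde g\in H_s[0,1]\cap\underline B_r[0,1]\cap\overline B_t[0,1]$; the two endpoints of $[0,1]$ do not affect any of the dimensions. The main obstacle is the Hausdorff upper bound $\dim_H G_g(K\cap\overline I)\le s$. Box dimension only gives the weaker bound $r$, and Hausdorff dimension of graphs over fractals is not controlled by the domain for an arbitrary continuous $g$. A generic $g$ need not satisfy the bound either, since prevalence only pins $\dim_H G_g(K)=s$. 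The way out is that $\dim_H G_g(K)=s$ is already part of the prevalent choice, because $K$ itself may be included among the $K_n$. Monotonicity then gives $\dim_H G_g(K\cap\overline I)\le s$, and Lemma \ref{lemLW} finishes the argument.
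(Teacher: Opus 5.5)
Your argument is correct and is essentially the paper's proof: the same set $K$ from Lemma \ref{Ni-W} and the same countable family of pieces $K\cap\overline{I_n}$, made prevalent via Lemmas \ref{balk1}, \ref{balk2}, \ref{balk3} and a countable intersection, with lower bounds from a piece contained in $\overline I$ and upper bounds via Lemmas \ref{lemLW} and \ref{lemLL}; you are in fact more explicit than the paper about why $\widetilde g|_{\overline I}$ is a linear extension of $g|_{K\cap\overline I}$ and why $K_n$ has only finitely many isolated points. One correction: the Hausdorff upper bound is not an obstacle, since for every bounded $g$ the graph $G_g(K\cap\overline I)$ lies in $(K\cap\overline I)\times[\min g,\max g]$, so (\ref{sec2equ1}) gives $\dim_H G_g(K\cap\overline I)\le s$ outright (the bound you used in your upper-bound paragraph, and the one the paper uses); your closing claim that the domain does not control it is therefore false, and your alternative via $\dim_H\le\underline{\dim}_B$ would only give $r$, though your fallback of including $K$ among the $K_n$ is also valid.
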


\begin{proof}
By applying Lemma \ref{Ni-W}, we can take a compact set $K \subseteq [0,1]$ such that (\ref{equprop1}) holds for any open intervals $I \subseteq [0,1]$ with $K  \cap I\neq \emptyset$.
Clearly, $K$ contains no isolated points.
Assume that $\{U_i\}_{i=1}^\infty$ is the family of all open interval with rational endpoints that intersect $K$. Let $C_i = \overline{U_i}\cap K$,  where $\overline{U_i}$ represents the closure of the interval $U_i$. Note that
\[
\bigl( \dim_H C_i, \, \underline{\dim}_B C_i, \, \overline{\dim}_B C_i \bigr) = (s-1, r-1, t-1).
\]
It follows from Lemmas \ref{balk1} and \ref{balk2} that
\[
\mathcal{A}_i := \Bigl\{ f \in C(C_i) : \bigl( \dim_H G_f(C_i), \underline{\dim}_B G_f(C_i), \overline{\dim}_B G_f(C_i) \bigr) = (s, r, t) \Bigr\}
\]
is prevalent in $C(C_i)$. By Lemma \ref{balk3}, $R_i^{-1}(\mathcal{A}_i)$ is prevalent in $C(K)$, where
\[
R_i : C(K) \to C(C_i), \quad R_i(f) = f|_{C_i}.
\]
Furthermore, we have
\[
\bigcap_{i=1}^\infty R_i^{-1}(\mathcal{A}_i) =: \mathcal{A}
\]
is prevalent, i.e.,
\[
\mathcal{A}= \Bigl\{ f \in C(K) : \bigl( \dim_H G_f(C_i), \underline{\dim}_B G_f(C_i), \overline{\dim}_B G_f(C_i) \bigr) = (s, r, t),\ \forall i \in \mathbb{N} \Bigr\}
\]
is prevalent in $C(K)$. Since for any $f \in \mathcal{A}$ and any open interval $I \subseteq [0,1]$ with $I \cap K \neq \emptyset$, there exists $i \in \mathbb{N}$ such that $C_i \subseteq\overline{I} \cap K$,
then we obtain that
\[
\dim_H G_f(I \cap K) \geq s, \quad \underline{\dim}_B G_f(I \cap K) \geq r \quad \text{and} \quad \overline{\dim}_B G_f(I \cap K) \geq t.
\]
It follows from the monotonicity of these three dimensions that
\[
\dim G_f(I \cap K) \leq \dim G_f(K) \leq \dim K + 1,
\]
where $\dim$ denotes any one of $\dim_H$, $\underline{\dim}_B$ and $\overline{\dim}_B$.
This concludes that
\[
\bigl( \dim_H G_f(I \cap K), \underline{\dim}_B G_f(I \cap K), \overline{\dim}_B G_f(I \cap K) \bigr) = (s, r, t)
\]
for any $f \in \mathcal{A}$ and any open interval $I \subseteq [0,1]$ with $I \cap K \neq \emptyset$.

Take a function $g \in \mathcal{A}$. Let $\widetilde{g}$ be the linear extension of $g$ to the whole interval $[0,1]$, i.e.,
\[
\widetilde{g}(x) =
\begin{cases}
g(x), & x \in K, \\[4pt]
\text{affine}, & \text{on each component of } [0,1] \setminus K.
\end{cases}
\]
Then by Lemmas \ref{lemLW} and \ref{lemLL}, $\widetilde{g}$ is the desired function.
\end{proof}

\begin{theorem}\label{thmn1}
Let $s, r, t$ be real numbers with $1 < s< r <t \leq 2$. Then
\[
H_s[0,1] \cap \underline{B}_r[0,1] \cap \overline{B}_t[0,1]
\]
is strongly $\mathfrak{c}$-algebrable.
\end{theorem}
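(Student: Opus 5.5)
The plan is to apply Lemma \ref{lemexp2} with $\mathcal{F}=H_s[0,1]\cap\underline{B}_r[0,1]\cap\overline{B}_t[0,1]$ and $F=\widetilde g$, where $\widetilde g$ and $K$ are given by Proposition \ref{prop1}. It then suffices to show that $g\circ\widetilde g\in\mathcal{F}\setminus\{0\}$ for every exponential-like function $g(x)=\sum_{i=1}^m a_ie^{\beta_ix}$.

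First, $g\circ\widetilde g\neq 0$. The function $\widetilde g$ is not constant, since its graph has dimension $s>1$. Hence its range is a nondegenerate interval. By Lemma \ref{lemexp1}, $g$ vanishes at no more than $m$ points, so $g\circ\widetilde g$ cannot vanish identically.

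For the dimension claims, I would use the decomposition from Lemma \ref{lemexp1}. Split $\mathbb{R}$ into finitely many intervals on which $g$ is strictly monotone. Since $g$ is real-analytic and nonconstant, I would refine this so that on each closed piece $J$ of a subdivision of the compact range $\widetilde g([0,1])$, we have $|g'|\ge c_J>0$. The finitely many critical points of $g$ (at most $m-1$ zeros of $g'$, again by Lemma \ref{lemexp1} applied to $g'$, which is exponential-like) need separate treatment. Off these critical values, $g$ is bi-Lipschitz on each such $J$. Consequently the map $(x,y)\mapsto(x,g(y))$ is bi-Lipschitz on $[0,1]\times J$, and it carries $G_{\widetilde g}(\widetilde g^{-1}(J))$ onto $G_{g\circ\widetilde g}(\widetilde g^{-1}(J))$.

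The next step is to choose an open interval $I$ meeting $K$ such that $\widetilde g(\overline I)$ avoids the finitely many critical values, or at least lies in a region where $g$ is bi-Lipschitz. If the critical values are hit on every neighbourhood, I would instead pick $I$ so that $\widetilde g(I)$ stays near a non-critical value. This is possible because $\widetilde g^{-1}(\text{critical values})\cap K$ cannot contain all of $K$: the graph over a level set is a subset of a horizontal line, which has dimension at most $1<s$. Proposition \ref{prop1} then gives dimensions $(s,r,t)$ for $G_{\widetilde g}(I)$, and bi-Lipschitz invariance transfers these to $G_{g\circ\widetilde g}(I)$. This yields the lower bounds $\dim_H\ge s$, $\underline{\dim}_B\ge r$ and $\overline{\dim}_B\ge t$ for $G_{g\circ\widetilde g}([0,1])$.

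For the upper bounds, note that $g$ is Lipschitz on the compact range of $\widetilde g$. Off $K$, the function $\widetilde g$ is affine on each component, so $g\circ\widetilde g$ is smooth there. The graph over $K$ is a Lipschitz image, under $(x,y)\mapsto(x,g(y))$, of $G_{\widetilde g}(K)$, so its dimensions are at most $(s,r,t)$. For the Hausdorff dimension, countable stability handles the countably many complementary intervals, each contributing a graph of dimension $1$. For the box dimensions, I would argue directly: the map $\Phi(x,y)=(x,g(y))$ is Lipschitz on $[0,1]\times\widetilde g([0,1])$ and sends $G_{\widetilde g}([0,1])$ onto $G_{g\circ\widetilde g}([0,1])$. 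Lipschitz maps do not increase box dimensions, so the upper bounds follow from the whole graph of $\widetilde g$.

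The main obstacle is the localisation near critical points of $g$, where $g$ is not bi-Lipschitz. This is why the local (every-interval) version in Proposition \ref{prop1} is essential. I expect this to be the delicate point, and I would argue it via the level-set dimension observation above.
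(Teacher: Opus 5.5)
Your proposal is correct and follows essentially the same route as the paper. You apply Lemma \ref{lemexp2} to the function from Proposition \ref{prop1} and take the upper bounds from the Lipschitz map $(x,y)\mapsto(x,g(y))$ on the whole graph. You get the lower bounds by choosing a point of $K$ where $\widetilde g$ avoids the finitely many zeros of $g'$, so that this map is bi-Lipschitz on the graph over a small interval $I$ meeting $K$.

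Your extra remarks only make explicit what the paper leaves implicit: the check that $g\circ\widetilde g\neq 0$, and the level-set argument showing such a point of $K$ exists. Only finiteness of $(g')^{-1}(\{0\})$ is needed, so it does not matter that Lemma \ref{lemexp1} gives at most $m$ zeros rather than the $m-1$ you state.
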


\begin{proof}
By Proposition \ref{prop1}, we obtain a compact set $K \subseteq [0,1]$ and $f \in H_s[0,1] \cap \underline{B}_r[0,1] \cap \overline{B}_t[0,1]$ such that
\[
\bigl({\dim}_H K \cap I, \underline{\dim}_B K \cap I, \overline{\dim}_B K \cap I \bigr) = (s-1, r-1, t-1)
\]
and
\begin{equation}\label{thm1equ1}
\bigl( \dim_H G_f(I), \underline{\dim}_B G_f(I), \overline{\dim}_B G_f(I) \bigr) = (s, r, t)
\end{equation}
for any open interval $I \subseteq [0,1]$ with $K\cap I \neq \emptyset$.

Let $g$ be an exponential-like function of range $m \in \mathbb{N}$. By Lemma \ref{lemexp2}, we only need to prove that
$g \circ f \in H_s[0,1] \cap \underline{B}_r[0,1] \cap \overline{B}_t[0,1]$, i.e.,
\[
\bigl( \dim_H G_{g \circ f}([0,1]), \underline{\dim}_B G_{g \circ f}([0,1]), \overline{\dim}_B G_{g \circ f}([0,1]) \bigr) = (s, r, t).
\]
Consider the map
\[
T : G_f([0,1]) \to G_{g \circ f}([0,1])\]
\[
 (x, f(x)) \mapsto (x, g(f(x))).
\]
It follows from the fact that
\[
|g(f(x)) - g(f(y))| \leq \max_{u\in f([0,1])}|g'(u)| |f(x) - f(y)|,
\]
that $T$ is a Lipschitz map. Therefore, we obtain that
\[
\dim_H G_{g \circ f}([0,1]) \leq s,
\]
\[
\underline{\dim}_B G_{g \circ f}([0,1]) \leq r
\]
and
\[
\overline{\dim}_B G_{g \circ f}([0,1]) \leq t.
\]
By Lemma \ref{lemexp1}, $(g')^{-1}(\{0\})$ is a finite set. Thus we can take an $x \in K$ satisfying $f(x) \notin (g')^{-1}(\{0\})$.
Then there exists an open interval $I \subseteq [0,1]$ such that
\[x
\in I \quad \text{and} \quad f(I) \cap (g')^{-1}(\{0\})=\emptyset.
\]
Thus $g$ is strictly monotone on $f(I)$, in particular, $g$ is invertible on $f(I)$.
Combining this with the fact that $g$ is continuously differentiable on $f(I)$ and
$f(I)\cap (g')^{-1}(\{0\})=\emptyset$, the inverse of $g$ is continuously differentiable on $f(I)$. We conclude that
\[
T : G_f(I) \to G_{g \circ f}(I)
\]
is bi-Lipschitz. Based on the bi-Lipschitz invariance of these three dimensions, we obtain that
\[
\begin{aligned}
& \bigl( \dim_H G_{g \circ f}(I), \underline{\dim}_B G_{g \circ f}(I), \overline{\dim}_B G_{g \circ f}(I) \bigr) \\
&= \bigl( \dim_H G_f(I), \underline{\dim}_B G_f(I), \overline{\dim}_B G_f(I) \bigr).
\end{aligned}
\]
Combining this with (\ref{thm1equ1}) and the upper bound of these dimensions derived above,
we conclude that $g \circ f \in H_s[0,1] \cap \underline{B}_r[0,1] \cap \overline{B}_t[0,1]$.
The proof is thus complete.
\end{proof}

\begin{theorem}\label{thmn2}
Let $1< s< r< t\leq2$. Then $H_s[0,1] \cap \underline{B}_r[0,1] \cap \overline{B}_t[0,1]$ is spaceable.
\end{theorem}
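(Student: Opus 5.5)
Our plan is to build a closed infinite-dimensional subspace from countably many "disjointly supported" copies of the function in Proposition \ref{prop1}, and to control all three dimensions with Lemma \ref{lemL}, Lemma \ref{lembi} and Lemma \ref{lemLL}.

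\textbf{Construction.} Take the compact set $K$ and the function $\widetilde g$ from Proposition \ref{prop1}. Since $K$ is perfect and uncountable, we choose pairwise disjoint closed intervals $J_n=[a_n,b_n]\subseteq[0,1]$ with $a_n,b_n\notin K$, $K\cap \operatorname{int}J_n\neq\emptyset$ and $b_n\to$ some limit point, arranged monotonically. For each $n$ let $g_n$ equal $\widetilde g$ minus the affine function agreeing with $\widetilde g$ at $a_n,b_n$ on $J_n$, and $g_n=0$ off $J_n$. Then $g_n$ is continuous and supported on $J_n$. Rescaling $g_n$ to have norm $1$ does not change dimensions. Moreover $g_n$ differs from $\widetilde g$ by a Lipschitz function on $J_n$, so by Lemmas \ref{lemL} and \ref{lembi} together with Proposition \ref{prop1}, $G_{g_n}(J_n)$ has dimension triple $(s,r,t)$. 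Because the supports are disjoint, $(g_n)$ is isometrically equivalent to the unit vector basis of $c_0$. Let $M=\overline{\operatorname{span}}\{g_n\}$; every element has the form $f=\sum c_n g_n$ with $c_n\to0$, the series converging uniformly.

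\textbf{Lower bounds.} For $0\neq f\in M$, pick $n$ with $c_n\neq0$. On $J_n$ we have $f=c_ng_n$, hence $G_f(J_n)$ is the image of $G_{g_n}(J_n)$ under the bi-Lipschitz map $(x,y)\mapsto(x,c_ny)$. Monotonicity of all three dimensions then gives $\dim_H G_f[0,1]\ge s$, $\underline{\dim}_B G_f[0,1]\ge r$ and $\overline{\dim}_BG_f[0,1]\ge t$.

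\textbf{Upper bounds (main obstacle).} We expect this step to be the hardest.

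For the Hausdorff dimension, write $G_f[0,1]$ as the countable union of the sets $G_f(J_n)$ (each of dimension $\le s$), the graph over the complement intervals (where $f=0$, a segment), and countably many endpoints. Countable stability then gives $\dim_H\le s$.

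For the box dimensions, countable stability fails, so we argue differently. Observe that $f$ is affine (identically zero) off $\bigcup_n J_n$. On each $J_n$, $f$ is a scalar multiple of $\widetilde g$ plus an affine map. So $f$ agrees with $h:=\sum_n c_n(\widetilde g-\ell_n)\mathbf 1_{J_n}$, which is a modification of a function with graph built from $K$. The plan is to show that $f$ equals the linear extension $\widetilde{F}$ of $F:=f|_{K'}$, where
\[
K' = \Bigl(K\cap\bigcup_n J_n\Bigr)\cup\{a_n,b_n\}_n\cup\{\text{limit point}\}.
\]
This holds because $f$ is affine on every gap of $K'$: inside $J_n$ the gaps of $K$ carry affine $\widetilde g$, and outside, $f=0$. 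Since $K'$ is $K$ together with a countable set having a single accumulation point, $\overline{\dim}_B K'\le \max(\overline{\dim}_BK,\ \overline{\dim}_B\{a_n,b_n\})$. We choose the $J_n$ so that the endpoint sequence has box dimension $0$, for instance geometrically converging lengths. This gives $\overline{\dim}_BK'\le t-1$. For the lower box dimension, finite stability fails, so we instead place $a_n,b_n$ at points adjacent to gaps of $K$, that is, at endpoints of complementary intervals of $K$. Then $K'\subseteq K\cup\{\text{endpoints of gaps}\}$ and $\overline{K'}\subseteq K$ up to finitely many points, so $\underline{\dim}_BK'\le\underline{\dim}_BK=r-1$. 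Lemma \ref{lemLL} then yields $\underline{\dim}_BG_f\le r$ and $\overline{\dim}_BG_f\le t$, completing the proof. The delicate point is precisely arranging the $J_n$ endpoints inside $K$'s gap structure so that $f$ is a genuine linear extension from a subset of $K$. If this fails, we would fall back to a direct covering estimate using the uniform smallness $\sup_{m\ge n}|c_m|\to0$.
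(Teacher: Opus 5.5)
Your argument is correct and is essentially the paper's proof: disjointly supported blocks over intervals meeting $K$, the lower bound read off from one block, and the upper bound from the fact that every element of the closed span is the linear extension of its restriction to a compact subset of $K$ (Lemma \ref{lemLL}). The differences are minor: you localize the single function of Proposition \ref{prop1} by subtracting chords, where the paper picks a prevalent $f_n$ on each $K\cap\overline{I_n}$ vanishing at $l(K_n),r(K_n)$, and you get the Hausdorff upper bound by countable stability instead of Lemma \ref{lemLW}.

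One fix is needed. Your construction requires $a_n,b_n\notin K$, but your last paragraph places them at gap endpoints, which lie in $K$. Take $a_n<b_n$ in $K$ with $K\cap(a_n,b_n)\neq\emptyset$ from the start; the requirement $a_n,b_n\notin K$ is never used. Then $K'\subseteq K$, $f$ is just the linear extension of $f|_K$, and both box bounds follow by monotonicity. Your worry about the lower box dimension was also unnecessary: even with $a_n,b_n\notin K$ converging geometrically, $N_\delta(K')\le N_\delta(K)+O(\log(1/\delta))$ already gives $\underline{\dim}_B K'\le r-1$.
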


\begin{proof}
Applying Lemma \ref{Ni-W}, there exists a compact subset $K$ of $[0,1]$ such that
\[
{\dim}_H K\cap I = s-1, \quad \underline{\dim}_B K\cap I = r-1, \quad \overline{\dim}_B K\cap I = t-1
\]
for any nonempty open interval $I$ with $K\cap I\neq \emptyset$. This implies that $K$ has no isolated points.
We then choose a sequence of pairwise disjoint open intervals $(I_n)_{n\ge 1}$ such that
\begin{itemize}
    \item $I_n \cap K \neq \emptyset$ for all $n\ge 1$,
    \item $I_{n+1}$ always lies to the left of the interval $I_n$ for each $n$.
\end{itemize}
Denote by $K_n = K \cap \overline{I}_n$.
Then $K_n$ is a compact set with at most finitely many isolated points and
\[
(\dim_H K_n, \underline{\dim}_B K_n, \overline{\dim}_B K_n) = (s-1, r-1, t-1).
\]
For each $n\ge 1$, by Lemmas \ref{lemL}, \ref{lembi}, \ref{balk1} and \ref{balk2}, there exists a function $f_n \in C(K_n)$ with the following properties:
\begin{itemize}
    \item $\big(\dim_H G_{f_n}(K_n), \underline{\dim}_B G_{f_n}(K_n), \overline{\dim}_B G_{f_n}(K_n)\big) = (s, r, t)$;
    \item $f_n\big(r(K_n)\big) = f_n\big(l(K_n)\big) = 0$,
\end{itemize}
 where $r(K_n) = \max\{x \mid x\in K_n\}$ and $l(K_n) = \min\{x \mid x\in K_n\}$.
We extend $f_n$ linearly to the whole interval $[0,1]$, and denote the extention by $\widetilde{f}_n$, specifically,
\[
\widetilde{f}_n(x)=
\begin{cases}
f_n(x), & x\in K_n,\\[4pt]
\text{affine}, & \text{on each component of } \operatorname{conv}(K_n)\setminus K_n,\\[4pt]
0, & x\in [0,1]\setminus \operatorname{conv}(K_n),
\end{cases}
\]
where $\operatorname{conv}(K_n)$ denotes the convex hull of $K_n$.
One can directly observe that $\{\widetilde{f}_n\}_{n\ge1}$ have pairwise disjoint supports. So $\{\widetilde{f}_n\}_{n\ge1}$ is linearly independent.

We let
\[
M = \overline{\operatorname{span}}(\widetilde{f}_n)_{n\ge1},
\]
i.e., $M$ is the closed linear subspace of $C[0,1]$ spanned by $(\widetilde{f}_n)_{n\ge1}$. Due to the disjointness of their supports, $(\widetilde{f}_n)_{n\ge1}$ is a Schauder basic sequence. Then for any $f \in \overline{\operatorname{span}}(\widetilde{f}_n)\setminus\{0\}$, there exists a nonzero sequence $(a_n)_{n\ge1}$ of scalars such that
\[
f = \sum_{n\ge1} a_n \widetilde{f}_n.
\]

Now we proceed to prove that $f \in H_s[0,1] \cap \underline{B}_r[0,1] \cap \overline{B}_t[0,1]$. For the remainder of this proof, let $\dim$ denote any one of $\dim_H$, $\underline{\dim}_B$ and $\overline{\dim}_B$.
Since $f\neq 0$, there is some $n_0$ such that $a_{n_0} \neq 0$. Recalling that the restriction of $f$ to $K_n$ satisfies
\[f|_{K_n} = a_n \widetilde{f}_n|_{K_n} = a_n f_n\]
 for any $n\ge1$. Then we obtain that
\begin{equation}
\begin{aligned}\label{thm2eq1}
\dim G_f([0,1]) &\ge \dim G_f(K_{n_0}) = \dim G_{a_{n_0} f_{n_0}}(K_{n_0}) \\
&= \dim G_{f_{n_0}}(K_{n_0}) = \dim K_{n_0} + 1.
\end{aligned}
\end{equation}
From the construction of $\{\widetilde{f}_n\}$, it is immediate that $f$ is the linear extension of $f|_K$ to $[0,1]$. By applying Lemma \ref{lemLW} and Lemma \ref{lemLL}, we derive the corresponding upper bound
\[
\dim G_f([0,1]) \le \dim K + 1.
\]
Combining these upper bounds with (\ref{thm2eq1}), we conclude that $f \in H_s[0,1] \cap \underline{B}_r[0,1] \cap \overline{B}_t[0,1]$, which completes the proof.
\end{proof}
\section{$(\alpha,\beta)$-lineability/spaceability of functions}

In this section, we investigate the $(\alpha,\beta)$-lineability/spaceability of the intersection $H_s[0,1]\cap \overline{B}_t[0,1].$ Precisely, we prove that for any $1<s\leq t\leq2$, $H_s[0,1] \cap \overline{B}_t[0,1]$ is $(p,\mathfrak{c})$-spaceable for $p=1,2$ (Theorem \ref{thm4}) and $(n,n+m)$-lineable for any $n,m\in\mathbb{N}$ (Theorem \ref{thm3}), thus complementing two recent results of Liu et al. \cite{LS}. We first recall some relevant definitions and technical lemmas.
\begin{definition}
Suppose that $K \subseteq [0,1]$ is a compact set, $f \in C(K)$ and $x \in K$.
If
\[
\dim_H G_f(B(x,\delta) \cap K) = \dim_H G_f(K)
\]
 for any $\delta>0$,
then $x$ is said to be a \emph{full-dimensional point of $f$ for Hausdorff dimension}.
In short, we call $x$ a full-dimensional point of $f$.
\end{definition}
\begin{remark}
By the stability of Hausdorff dimension, the full-dimensional points of any function $f\in C(K)$ always exist.
\end{remark}
\begin{definition} Let $K\subseteq[0,1]$ be a compact set with positive Hausdorff dimension, and set ${\dim}_HK=s-1$. For a function $f\in C(K)$ and a point $x\in K$, we say that $x$ is a \emph{full s-dimensional point of $f$ for Hausdorff dimension with respect to $K$} if
\[
{\dim}_HG_f(K\cap B(x,\delta))={\dim}_HK+1=s
\]
holds for every $\delta>0$.
For brevity, we simply refer to such a point $x$ as \emph{a full s-dimensional point of $f$}.
\end{definition}
\begin{remark}\label{remark2} Suppose that $K\subseteq[0,1]$ is a compact set with ${\dim}_HK=s-1\in(0,1]$.

1) For a given function $f\in C(K)$,  its full $s$-dimensional points may fail to exist.

2) Let $f\in C(K)$ and $x\in K$. If $x$ is a full $s$-dimensional point then $x$ is a full-dimensional point of $f$; the converse is not true.
 If in addition, $f\in H_s(K)$ then  $x$ is a full $s$-dimensional point if and only if  $x$ is a full-dimensional point.
\end{remark}

\begin{lemma}[Restriction Theorem]\label{thm1} \cite{LS}
Let $K \subseteq [0,1]$ be a compact set and $s \in (1,2]$.
Then for any $f \in H_s(K)$, there exists a compact subset $F$ of $K$ such that
\begin{enumerate}
    \item $\dim_H F = s-1$,
    \item $f$ is nearly locally Lipschitz on $F$.
\end{enumerate}
\end{lemma}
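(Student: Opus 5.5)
We take $f\in H_s(K)$ and produce the compact set $F$ in two stages: first we isolate the points where $f$ is already locally Lipschitz, and then, if that set is too small, we use Frostman's lemma together with a density argument to locate a large compact set on which $f$ becomes Lipschitz in a neighbourhood of all but countably many points.

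\textbf{Reduction.} Because $G_f(K)$ is the image of $K$ under $x\mapsto(x,f(x))$, and this map has a Lipschitz inverse (the coordinate projection), we have $\dim_H K\le \dim_H G_f(K)=s$. Now let
\[
L=\{x\in K:\ f \text{ is Lipschitz on } K\cap B(x,\delta) \text{ for some } \delta>0\}.
\]
The set $L$ is relatively open in $K$, so $L$ is a countable union of compact sets $K\cap J$ with $J$ a closed rational interval. On each such piece $f$ is bi-Lipschitz equivalent to the identity, so its graph has the same dimension as the piece.

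\textbf{Case $\dim_H L\ge s-1$.} If $\dim_H L> s-1$, the graph of $f$ over $L$ would have dimension $>s-1$. This is not yet a contradiction when $s-1<1$, and in that case we only need a subset. By countable stability, some compact piece $K\cap J\subseteq L$ satisfies $\dim_H(K\cap J)\ge s-1$ (or comes arbitrarily close to it). We then choose a compact subset of exactly dimension $s-1$. This is possible because every compact set of dimension $\ge d$ contains a compact subset of dimension exactly $d$, which follows from the existence of compact subsets with positive finite $\mathcal H^{d'}$-measure for all $d'<\dim$, together with a union of such subsets. The function $f$ is Lipschitz near each point of this subset, so we are done.

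\textbf{Case $\dim_H L< s-1$ (the main obstacle).} Here the dimension has to come from points where $f$ is rough, and we must \emph{make} $f$ Lipschitz on a large subset. The plan is as follows. For each $d<s-1$ we apply Frostman's lemma to $K$ (which is legitimate, since $\dim_H K\ge s-1$ whenever $\dim_H K\ge s-1$; if $\dim_H K<s-1$, we instead use the projection to graph dimension) to obtain a measure $\mu$ on $K$ with $\mu(B(x,r))\le r^d$.

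Next, for $M\in\mathbb N$, consider the compact set
\[
E_M=\{x\in K:\ |f(x)-f(y)|\le M|x-y| \text{ for all } y\in K \text{ with } |x-y|\le 1/M\}.
\]
If $\mu(E_M)>0$ for some $M$, then on $E_M$ the function $f$ is locally $2M$-Lipschitz in the pointwise sense. Using Lusin/density arguments, we can pass to a compact subset, of diameter below $1/M$, on which the restriction $f|_{E_M}$ is genuinely Lipschitz. That subset has dimension $\ge d$.

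The difficulty is that $E_M$ may carry no $\mu$-mass for a general continuous $f$. To handle this, I would instead lean on the known restriction result for Hausdorff dimension, namely Kirchheim-type or Máthé's theorem: every continuous function on a compact set of dimension $d$ admits a compact subset of dimension $d$ on which it is Lipschitz, or even of bounded variation. This is exactly the step where I expect the real work to lie, in the form of a Cantor-type construction. In that construction we inductively choose nested finite unions of small intervals such that, at each level, the oscillation of $f$ on each chosen interval is comparable to the interval's length, while the counting of intervals keeps the dimension at $d_k\uparrow s-1$.

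\textbf{Assembling $F$.} We take compact sets $F_k$ with $\dim_H F_k\ge d_k\uparrow s-1$ on which $f$ is Lipschitz, place them in disjoint intervals accumulating at a single point, and set $F=\{x_\infty\}\cup\bigcup_k F_k$. Then $F$ is compact and $\dim_H F=s-1$ after trimming the pieces if needed. Every point of $F$ other than $x_\infty$ has a neighbourhood meeting only one $F_k$, so $f$ is nearly locally Lipschitz on $F$, since only a countable set (here the single point $x_\infty$) is excluded.
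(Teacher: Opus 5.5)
The paper does not prove this lemma; it quotes it from \cite{LS}. Judged on its own, your proposal has a genuine gap at exactly the step you flag as the main obstacle, and the result you invoke there is false. No theorem says that a continuous function on a compact set of dimension $d$ is Lipschitz, or even of bounded variation, on a compact subset of dimension $d$. M\'ath\'e's theorem only gives bounded variation on a set of dimension $1/2$. Moreover, by Angel--Balka--M\'ath\'e--Peres, a Brownian path on $[0,1]$ is almost surely neither Lipschitz nor of bounded variation on any set of upper Minkowski dimension $>1/2$, so on no compact set of Hausdorff dimension $1$. This example is consistent with the lemma: the Brownian graph has dimension $3/2$, so $s-1=1/2$. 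It shows that the dimension of $F$ has to be extracted from the hypothesis $\dim_H G_f(K)=s$, not from $\dim_H K$.

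Your fallback Cantor construction has the same defect. It is described only by its desired output: subintervals on which the oscillation is comparable to the length, with counts giving dimension $d_k\uparrow s-1$. Nothing explains why enough such intervals exist at each scale, and the hypothesis on the graph is never used. Since $f$ nearly locally Lipschitz on $F$ forces $\dim_H G_f(F)=\dim_H F$, the whole content of the lemma is converting graph dimension into a large ``flat'' subset, and that conversion is missing.

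There are also smaller slips. Your reduction proves $\dim_H K\le s$, whereas the relevant inequality is $\dim_H K\ge s-1$, which follows from $G_f(K)\subseteq K\times[\min f,\max f]$. In the assembly you cannot ``place'' the $F_k$ in prescribed intervals, because they are subsets of $K$ determined by $f$ and must be found inside those intervals.

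A route that works is Marstrand's slicing theorem, and it makes your case split via $L$ unnecessary.
\begin{enumerate}
\item For $1<t<s$, choose a compact $A\subseteq G_f(K)$ with $0<\mathcal{H}^t(A)<\infty$. By slicing, for almost every direction a positive-measure family of parallel lines meets $A$ in sets of dimension $t-1$.
\item Take such a non-vertical line $y=cx+b$. Projecting $A\cap\{y=cx+b\}$ to the $x$-axis gives a compact $F_t\subseteq K$ with $\dim_H F_t=t-1$ on which $f(x)=cx+b$, so $f$ is Lipschitz there. If $\mathcal{H}^s(G_f(K))>0$, you can take $t=s$ and stop.
\item Otherwise assemble. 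Pick $x_0\in K$ with $\dim_H G_f(K\cap B(x_0,\delta))=s$ for every $\delta>0$; it exists by compactness and finite stability.
\item Countable stability over dyadic annuli around $x_0$ yields pairwise disjoint closed intervals $J_k\to x_0$ with $x_0\notin J_k$ and $\dim_H G_f(K\cap J_k)>t_k$, where $t_k\in(1,s)$ and $t_k\uparrow s$.
\item Slicing inside each $J_k$ gives $F_k\subseteq K\cap J_k$ with $\dim_H F_k=t_k-1$ on which $f$ is affine.
\item Then $F=\{x_0\}\cup\bigcup_k F_k$ is compact with $\dim_H F=s-1$, and $f$ is Lipschitz near every point of $F$ except $x_0$.
\end{enumerate}
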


\begin{remark}\label{remark1}
By an argument analogous to  that in \cite{LS}, if $x_0$ is a
full $s$-dimensional point of $f$, then for any $\delta>0$, there exists a
compact set $K_\delta \subseteq B(x_0,\delta) \cap K$ such that $f$ is nearly locally Lipschitz on $K_\delta$ and
\[
\dim_H K_\delta = s-1.
\]
\end{remark}

\begin{lemma}\label{cor1}\cite{LS}
Let $s \in (1,2]$ and let $K \subseteq[0,1]$ be a compact set.
If $f \in H_s(K)$ has infinitely many full-dimensional points, then there exists a sequence $\{K_j\}_{j \geq 1}$ of compact subsets of $K$  satisfying the following properties:
\begin{enumerate}
    \item $\dim_H K_j = s-1, \ \forall j \geq 1$;
    \item $\mathrm{conv}(K_i) \cap \mathrm{conv}(K_j)=\emptyset$ whenever $i \neq j$;
    \item $f$ is nearly locally Lipschitz on each $K_j$.
\end{enumerate}
Furthermore,
\[
f\Big|_{K \setminus \bigcup_{j \geq 1} K_j} \in H_s\Big(K \setminus \bigcup_{j \geq 1} K_j\Big).
\]
\end{lemma}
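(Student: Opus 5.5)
The plan is to build the sets $K_j$ one at a time, taking each near a different full-dimensional point, and then use countable stability of Hausdorff dimension to control what is left.

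\textbf{Choosing the points.} Since $f\in H_s(K)$, Remark \ref{remark2}(2) shows that every full-dimensional point of $f$ is a full $s$-dimensional point. By hypothesis there are infinitely many of them, and $K$ is compact. So we can pick a sequence of distinct full-dimensional points $x_1,x_2,\dots$ that converges monotonically to some $x^*\in[0,1]$, with every $x_j\neq x^*$. We then choose radii $\delta_j>0$ so that the closed intervals $\overline{B(x_j,\delta_j)}$ are pairwise disjoint and do not contain $x^*$. This is possible because the points are isolated from one another in the sequence; for example, take $\delta_j$ smaller than a third of the distance from $x_j$ to the other points and to $x^*$.

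\textbf{Building the $K_j$.} By Remark \ref{remark1}, for each $j$ there is a compact set $K_j\subseteq B(x_j,\delta_j)\cap K$ on which $f$ is nearly locally Lipschitz and $\dim_H K_j=s-1$. This gives (1) and (3). The convex hulls satisfy $\mathrm{conv}(K_j)\subseteq \overline{B(x_j,\delta_j)}$, and these intervals are pairwise disjoint, so (2) holds.

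\textbf{The remainder.} Put $L=K\setminus\bigcup_j K_j$. Then $G_f(L)\subseteq G_f(K)$, so $\dim_H G_f(L)\le s$ and only the lower bound needs work. The concern is that removing the $K_j$ might lower the dimension.

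To prevent this, shrink each $K_j$ so that it stays away from a set carrying full dimension. Since $x_j$ is full-dimensional, $\dim_H G_f(B(x_j,\delta_j/2)\cap K)=s$. Therefore either $\dim_H G_f\bigl((B(x_j,\delta_j/2)\setminus B(x_j,\eta))\cap K\bigr)$ approaches $s$ as $\eta\to0$, or a smaller-scale full-dimensional point is needed. The cleaner route is to choose $K_j$ inside a small ball $B(y_j,\rho_j)$ around a different full-dimensional point $y_j\in B(x_j,\delta_j)$, while keeping the other points $x_{j'}$ and their neighbourhoods away from $\bigcup_i K_i$.

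The simplest way to get the lower bound is to reserve a full-dimensional point. Choose the $K_j$ near $x_{2j}$ only, with all balls disjoint from fixed open neighbourhoods $V_j$ of the points $x_{2j+1}$. Then $L\supseteq K\cap V_j$, and $\dim_H G_f(K\cap V_j)=s$ because $x_{2j+1}$ is full-dimensional. Hence $\dim_H G_f(L)\ge s$. Together with the upper bound this gives $f|_L\in H_s(L)$.

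\textbf{Obstacles.} There are two subtle points. First, $L$ need not be compact, but $C(L)$ is defined for bounded sets, so this is fine. Second, $f|_L$ must be uniformly continuous, which holds because $f$ is uniformly continuous on $K$. The remaining care is in keeping all the balls pairwise disjoint, which is straightforward once the odd-indexed points are reserved.
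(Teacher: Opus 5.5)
Your proof is correct. It differs from the intended argument only in the final clause, where you take a detour that property (3) makes unnecessary. The paper only cites this lemma from \cite{LS}, but the intended reasoning is visible in subcase a2) of Proposition~\ref{sec4prop1}.

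\textbf{The direct route for the last clause.} Since $f$ is nearly locally Lipschitz on $K_j$, Lemma~\ref{lemL} with $g=0$ gives $\dim_H G_f(K_j)=\dim_H K_j=s-1$. By countable stability, $\dim_H G_f\bigl(\bigcup_j K_j\bigr)=s-1<s$. Because $G_f(K)=G_f\bigl(\bigcup_j K_j\bigr)\cup G_f(L)$ has dimension $s$, you get $\dim_H G_f(L)=s$ at once. So your concern that removing the $K_j$ might lower the dimension cannot arise. The exploratory paragraph about $\eta\to0$ and ``smaller-scale full-dimensional points'' is not an argument as written and should be deleted.

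\textbf{Your reservation trick.} It is still valid. With the intervals $\overline{B(x_i,\delta_i)}$ pairwise disjoint, $K\cap B(x_{2j+1},\delta_{2j+1})\subseteq L$, and its graph has dimension $s$. Its advantage is that it does not use (3): it works for any compact sets avoiding a neighbourhood of one full-dimensional point. A single reserved point is enough, so you need not sacrifice infinitely many. Its cost is only that it is longer than the one-line stability argument.

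\textbf{A small citation mismatch.} Remarks~\ref{remark2} and \ref{remark1} are phrased for $\dim_H K=s-1$, which this lemma does not assume. What you need follows directly from Lemma~\ref{thm1} applied to $f$ on the compact set $K\cap\overline{B(x_j,\delta_j/2)}$. Since $x_j$ is full-dimensional, $f$ restricted to that set lies in $H_s$ of it. Lemma~\ref{thm1} then yields a compact $F\subseteq B(x_j,\delta_j)\cap K$ with $\dim_H F=s-1$ on which $f$ is nearly locally Lipschitz.
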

\begin{lemma}\cite{LS}\label{lemx}
Let $s \in (1,2]$ and let $K \subseteq[0,1]$ be a compact set with $\mathcal{H}^{s-1}(K)> 0$. Suppose $f\in H_{s}(K)$.
Then there exists a sequence  $\{K_j\}_{j \geq 1}$ of compact subsets of $K$ satisfying the following properties:
\begin{enumerate}
    \item $\dim_H K_j = s-1, \ \forall j \geq 1$;
    \item $\mathrm{conv}(K_i) \cap \mathrm{conv}(K_j) = \emptyset$ whenever $i \neq j$;
    \item $f\Big|_{K \setminus \bigcup_{j \geq 1} K_j} \in H_s\Big(K \setminus \bigcup_{j \geq 1} K_j\Big)$.
\end{enumerate}
\end{lemma}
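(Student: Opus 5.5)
The plan is to avoid Lemma \ref{cor1} and argue directly. I would carve the sets $K_j$ out of a region of $K$ that stays away from one fixed full-dimensional point $x_0$ of $f$. Then a whole neighbourhood of $x_0$ survives in $K\setminus\bigcup_j K_j$, and that neighbourhood still carries graph dimension $s$.

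\textbf{Step 1 (a finite non-atomic measure).} Since $\mathcal{H}^{s-1}(K)>0$, Davies' subset theorem gives a compact $K'\subseteq K$ with $0<\mathcal{H}^{s-1}(K')<\infty$. Put $\mu=\mathcal{H}^{s-1}|_{K'}$. This is a finite Borel measure, and it has no atoms because $s-1>0$. Hence its support $S=\operatorname{supp}\mu\subseteq K$ is a nonempty perfect set, in particular uncountable. Every $y\in S$ satisfies $\mu(B(y,\rho))>0$ for all $\rho>0$.

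\textbf{Step 2 (choosing the region).} By the remark following the definition of full-dimensional points, $f$ has a full-dimensional point $x_0\in K$. Since $f\in H_s(K)$, this means $\dim_H G_f(K\cap B(x_0,\delta))=s$ for every $\delta>0$. Pick $y\in S$ with $y\neq x_0$, and choose $\eta>0$ so that $B(y,2\eta)\cap B(x_0,2\eta)=\emptyset$.

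\textbf{Step 3 (the sets $K_j$).} Because $S$ is perfect, $S\cap B(y,\eta)$ is infinite. Choose a sequence of distinct points $y_j\in S\cap B(y,\eta)$, strictly monotone and converging to some point. Around each $y_j$ choose a small closed interval $J_j\subseteq B(y,\eta)$ centred at $y_j$, with the $J_j$ pairwise disjoint. Then $\mu(J_j)>0$, so $K_j:=K'\cap J_j$ is compact with $0<\mathcal{H}^{s-1}(K_j)<\infty$, which gives $\dim_H K_j=s-1$. Moreover $\operatorname{conv}(K_j)\subseteq J_j$, so the convex hulls are pairwise disjoint. This settles properties (1) and (2).

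\textbf{Step 4 (property (3)).} Every $K_j$ lies in $B(y,\eta)$, which is disjoint from $B(x_0,\eta)$. Hence
\[
K\cap B(x_0,\eta)\subseteq K\setminus\bigcup_{j\ge1}K_j\subseteq K .
\]
By monotonicity of Hausdorff dimension,
\[
s=\dim_H G_f(K\cap B(x_0,\eta))\le \dim_H G_f\Big(K\setminus\bigcup_{j\ge 1}K_j\Big)\le \dim_H G_f(K)=s .
\]
The set $K\setminus\bigcup_j K_j$ need not be compact. This does not matter, because $f$ restricted to it is still uniformly continuous, so it lies in $C\bigl(K\setminus\bigcup_j K_j\bigr)$, and we conclude $f|_{K\setminus\bigcup_j K_j}\in H_s\bigl(K\setminus\bigcup_j K_j\bigr)$.

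\textbf{Main obstacle.} The delicate point is obtaining infinitely many pieces, each of dimension exactly $s-1$. Dimension exactly $s-1$ is not automatic, since $\dim_H K$ could exceed $s-1$. Both issues are handled by working inside $K'$ with finite positive $\mathcal{H}^{s-1}$-measure: finiteness caps each piece at dimension $s-1$, and the non-atomic measure makes its support perfect, which supplies infinitely many positive-measure pieces away from $x_0$. I would check Step 1 most carefully, including the appeal to Davies' theorem, which the paper does not state.
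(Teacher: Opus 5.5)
Your proof is correct. The paper itself gives no proof of this lemma; it only quotes it from \cite{LS}. So the useful comparison is with the machinery the paper builds around it: Lemma \ref{cor1}, the Restriction Theorem (Lemma \ref{thm1}), and the case analysis in Propositions \ref{sec4prop1} and \ref{sec4prop2}.

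That machinery makes the \emph{removed pieces} small in graph dimension. In one case $f$ is nearly locally Lipschitz on each $K_j$, so $\dim_H G_f(\bigcup_j K_j)=s-1<s$ by Lemma \ref{lemL} and countable stability. In the other case there are only finitely many full-dimensional points, and the $K_j$ are placed inside a compact $F$ that stays away from all of them, which forces $\dim_H G_f(F)<s$ (case b1). Either way, the complement inherits dimension $s$ by stability.

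You do the opposite: you keep the \emph{complement} large. One full-dimensional point $x_0$ and an open ball around it are protected, and monotonicity finishes the argument. This needs no restriction theorem and no case split. Your only external input is Davies' theorem, which the paper also uses silently whenever it picks a compact $F$ with $0<\mathcal{H}^{s-1}(F)<\infty$. As a bonus you get $0<\mathcal{H}^{s-1}(K_j)<\infty$, and you are free to place the $K_j$ anywhere in $\operatorname{supp}\mu$ away from $x_0$, for instance accumulating at a single point as Theorem \ref{thm4} requires.

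The price is that your trick is inherently a single-function argument. In Proposition \ref{sec4prop1} the full $s$-dimensional points of $af+bg$ move with $(a,b)$ and may form an infinite, even dense, set. Then there is no single ball that can be protected for the whole span. This is why the paper's route controls the graph dimension on the removed pieces instead of protecting a neighbourhood.

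For the use the paper actually makes of this lemma, the case $p=1$ of Theorem \ref{thm4}, your version is fully adequate. There the span is $\{af\}$, and since $(x,y)\mapsto(x,ay)$ is bi-Lipschitz for $a\neq0$, every nonzero multiple $af$ has the same full-dimensional points as $f$.
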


\begin{lemma}\label{sec3prop1}\cite{LS}
Let $s\in(1,2]$, and let $\{f_i\}_{i=1}^n\subseteq \overline{B}_s[0,1]$ be linearly independent. If
$\operatorname{span}\{f_i\}_{i=1}^n \subseteq \overline{B}_s[0,1]\cup\{0\},$ then there exists a nonempty open interval $I\subseteq[0,1]$ such that
\[
\left.\sum_{i=1}^n a_i f_i\right|_{[0,1]\setminus I} \in \overline{B}_s([0,1]\setminus I)
\]
for any $(a_1,a_2,\dots,a_n)\in\mathbb{R}^n\setminus\{(0,\dots,0)\}$.
\end{lemma}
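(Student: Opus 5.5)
The plan is a compactness argument on the unit sphere of the coefficient space, combined with the finite stability of upper box dimension. Write $F(a)=\sum_{i=1}^n a_i f_i$ for $a\in S^{n-1}$; by homogeneity it suffices to treat $a\in S^{n-1}$. The hypothesis gives $\overline{\dim}_B G_{F(a)}([0,1])=s$ for every such $a$.

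\textbf{Step 1 (two disjoint intervals).} Fix two nonempty open intervals $J_1,J_2\subseteq[0,1]$ with disjoint closures, say $J_1=(0,1/3)$ and $J_2=(2/3,1)$. Upper box dimension is finitely stable. Hence for each $a$,
\[
s=\overline{\dim}_B G_{F(a)}([0,1])=\max\bigl\{\overline{\dim}_B G_{F(a)}([0,1]\setminus J_1),\ \overline{\dim}_B G_{F(a)}([0,1]\setminus J_2)\bigr\},
\]
because $[0,1]=([0,1]\setminus J_1)\cup([0,1]\setminus J_2)$. So for each $a$ at least one of $I=J_1$ or $I=J_2$ works. The difficulty is to make a single choice of $I$ work uniformly in $a$. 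This is the main obstacle, since a pointwise choice depends on $a$.

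\textbf{Step 2 (localize where the dimension is attained).} For each $a$, let $E(a)$ be the set of points $x$ such that $\overline{\dim}_B G_{F(a)}(\overline{B(x,\delta)}\cap[0,1])=s$ for all $\delta>0$. By finite stability and compactness, $E(a)$ is nonempty and closed. For a given $I$, the conclusion holds for $a$ as soon as $E(a)\not\subseteq I$, because some closed neighbourhood of a point outside $I$ then lies in $[0,1]\setminus I$ up to a piece of $\partial I$. The goal is therefore to find an open interval $I$ that contains no set $E(a)$ entirely. The aim is to show that, for some finite collection of points $x_1,\dots,x_k$, every $E(a)$ meets $\{x_1,\dots,x_k\}$ or at least two separated regions. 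Then a small interval avoiding these points gives $I$.

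\textbf{Step 3 (the key dimension-counting argument).} The tool I would use is Lemma \ref{lemfal-fr}, in its local version on closed subintervals. Suppose $x\notin E(a)$, so $F(a)$ has upper box dimension $<s$ near $x$. Consider the subspace $V_x=\{a:\overline{\dim}_B G_{F(a)}\text{ near }x<s\}$. By Lemma \ref{lemfal-fr}, the set of $a$ with dimension $<s$ on a fixed neighbourhood is a linear subspace: it is closed under sums, and under scalars since scaling by a nonzero constant is bi-Lipschitz on graphs. Choose $x_1$ with $V_{x_1}$ proper in $\mathbb{R}^n$; such an $x_1$ exists because $f_1$ has dimension $s$ somewhere. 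Pick $x_2$ such that $V_{x_1}\cap V_{x_2}$ is strictly smaller, and continue. The dimensions strictly decrease, so after at most $n$ steps we obtain points $x_1,\dots,x_k$ with $\bigcap_j V_{x_j}=\{0\}$, taking the neighbourhoods small.

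It follows that every nonzero $a$ attains dimension $s$ on a small closed neighbourhood of some $x_j$. Finally, choose $I$ to be a nonempty open interval disjoint from small closed neighbourhoods of $x_1,\dots,x_k$; such an $I$ exists since only finitely many points are involved. Then each $F(a)|_{[0,1]\setminus I}$ has upper box dimension $\ge s$ by monotonicity, and $\le s$ since $\overline{\dim}_B G_{F(a)}([0,1])=s$. This gives the claim.

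A delicate point is making the neighbourhoods shrink consistently. I would fix the radii at the end, noting that $V_{x}(\delta)$ only grows as $\delta$ decreases and stabilizes in finite dimension.
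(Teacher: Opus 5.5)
Your proposal is correct. The paper does not prove this lemma itself; it quotes it from \cite{LS}, so there is no in-paper argument to compare with directly.

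The heart of your argument is Step 3. It works as you describe once two points are made explicit.
First, the localized form of Lemma~\ref{lemfal-fr} on a closed interval $J$ follows from the stated form by an affine change of the $x$-variable $J\to[0,1]$, which is bi-Lipschitz on graphs. Combined with invariance under $y\mapsto cy$ for $c\neq0$ and with $\overline{\dim}_B G_0(J)=1<s$, this makes each $V_x(\delta)$ a linear subspace.
Second, define $V_x:=\bigcup_{\delta>0}V_x(\delta)$. This is an increasing union of subspaces, hence a subspace. For $a\neq0$ you then have $a\notin V_x$ exactly when $x\in E(a)$, because $\overline{\dim}_B G_{F(a)}\le s$ on every subset.

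With this in place, $E(a)\neq\emptyset$ (by compactness and finite stability) drives the strictly descending chain $V_{x_1}\supsetneq V_{x_1}\cap V_{x_2}\supsetneq\cdots$ down to $\{0\}$ in at most $n$ steps. Any nonempty open $I$ with $\overline{I}\cap\{x_1,\dots,x_k\}=\emptyset$ then works for all $a$ simultaneously. The reason is that $x_j\in E(a)$ gives dimension $s$ on \emph{every} closed neighbourhood of $x_j$, in particular on one lying inside $[0,1]\setminus I$. So the ``delicate point'' about choosing radii consistently does not actually arise.

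A few minor remarks:
Step~1 is not needed.
The assertion in Step~2 that $E(a)\not\subseteq I$ already suffices is false in general. It fails when the only points of $E(a)$ outside $I$ are endpoints of $I$, since the dimension might then be attained only from inside $I$. Your final choice of $I$, with closure avoiding the $x_j$, sidesteps this, so no harm is done.
Linear independence of the $f_i$ is never used.

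The paper handles the Hausdorff-dimension counterparts (Propositions~\ref{sec4prop1} and~\ref{sec4prop2}) differently. There it works with full $s$-dimensional points and the Restriction Theorem, and splits into cases according to whether $\mathcal{P}_{\{f_i\}}$ is finite or infinite. Your route replaces all of that with finite-dimensional linear algebra. This is possible precisely because Lemma~\ref{lemfal-fr} makes ``upper box dimension $<s$ near $x$'' a linear condition on the coefficient vector. For Hausdorff dimension the paper has no such subadditivity under sums and can only absorb nearly locally Lipschitz summands via Lemma~\ref{lemL}, which is why those proofs are much longer. As a small bonus, your argument shows there are at most $n$ exceptional points, so $I$ can be chosen inside any prescribed open subinterval of $[0,1]$.
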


The following two propositions, which are essential to our proof in this section, are simple generalizations of \cite[Proposition 3.9 and Proposition 3.10]{LS}. Although their proofs are nearly identical to those in \cite{LS}, differing only in that full dimensional points and $P_{\{f_i\}}$ are replaced with full $s$-dimensional points and $\mathcal{P}_{\{f_i\}}$, we present them in full for the convenience of the reader.
\begin{proposition}\label{sec4prop1}
Let $s \in (1,2]$ and let $K \subseteq [0,1]$ be a compact set with ${\dim}_HK=s-1$ and $\mathcal{H}^{s-1}(K)>0$.
Suppose $f, g \in C(K)$, and define
\[\Omega_{\{f,g\}}(K)=\{(a,b)\in\mathbb{R}^2:{\dim}_HG_{af+bg}(K)=s\}.
\]
Then there exists a sequence $\{K_j\}_{j\ge1}$ of compact subsets of $K$ fulfilling the following properties:
\begin{enumerate}
    \item $\dim_H K_j = s-1$  for all $j\ge1$;
    \item $\operatorname{conv}(K_i) \cap \operatorname{conv}(K_j) = \emptyset$ whenever $i\ne j$;
    \item for any $(a,b) \in \Omega_{\{f,g\}}(K)$,
    \[
    (af+bg) \Big|_{K \setminus \bigcup_{j\ge1} K_j} \in H_s\Big(K \setminus \bigcup_{j\ge1} K_j\Big).
    \]
\end{enumerate}
\end{proposition}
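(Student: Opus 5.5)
Proof plan. Let $V=\operatorname{span}\{f,g\}$; Hausdorff dimension is unchanged under nonzero scalars, so $\Omega:=\Omega_{\{f,g\}}(K)$ is a union of lines through the origin (minus $0$). Since $\dim_H K=s-1$, every $h\in C(K)$ has $\dim_H G_h(K)\le s$. So $(a,b)\in\Omega$ exactly when $af+bg\in H_s(K)$, and then its full-dimensional points coincide with its full $s$-dimensional points (Remark \ref{remark2}). We split according to $\dim V$ and the number of directions in $\Omega$.

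\emph{Case A: $\Omega=\emptyset$.} Take any sequence of compact subsets with pairwise disjoint convex hulls and $\dim_H K_j=s-1$. One way: since $\mathcal{H}^{s-1}(K)>0$, apply Lemma \ref{lemx} to a function in $H_s(K)$, for instance a prevalent one from Lemma \ref{balk1}. Property (3) is then vacuous.

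\emph{Case B: only one direction, or $\dim V\le1$.} Here $\Omega\cup\{0\}$ is a line $\mathbb{R}h$ with $h\in H_s(K)$. Lemma \ref{lemx} applied to $h$ gives the sequence, and property (3) transfers to every nonzero multiple of $h$.

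\emph{Case C: $\dim V=2$ with at least two directions in $\Omega$.} Mimic \cite[Proposition 3.9]{LS} and build the $K_j$ inductively. Keep a current compact set $K^{(j)}=K\setminus\bigcup_{i<j}\operatorname{int}$-neighbourhoods, with the invariant that every $h=af+bg$, $(a,b)\in\Omega$, still has $\dim_H G_h(K^{(j)})=s$. At step $j$, pick a direction $h_1\in\Omega$ and a full $s$-dimensional point $x_1$ of $h_1$ on $K^{(j)}$; such points exist by countable stability. By Remark \ref{remark1}, inside a small ball $B(x_1,\delta)$ we find a compact $K_j$ with $\dim_H K_j=s-1$ on which $h_1$ is nearly locally Lipschitz. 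For the invariant we must ensure removal of a small neighbourhood of $\operatorname{conv}(K_j)$ does not destroy dimension $s$ for any direction.

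The key idea is the following. For a direction $h$, the set $\mathcal{P}_h$ of full $s$-dimensional points is closed. If $\mathcal{P}_h$ is infinite, then removing a small interval around $x_1$ leaves full points, so $\dim_H$ stays $s$. If $\mathcal{P}_h$ is finite but $\ne\{x_1\}$, the same holds once $\delta$ is small. The bad directions are those whose only full $s$-dimensional point is $x_1$. Two distinct directions $h,h'$ of $V$ satisfy $\operatorname{span}\{h,h'\}=V\ni h_1$. If both had $h_1$ nearly locally Lipschitz on $K_j$ with $x_1$ as the unique full point, then Lemma \ref{lemL} gives near $x_1$ that $\dim_H G_{h'}=\dim_H G_{h'+ch_1}$. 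This should be used to show that at most one direction is bad at $x_1$. Choosing $x_1$ outside the (at most countable, since each bad direction pins a single point) set of such points, or choosing $h_1$ among the bad directions itself, avoids the problem. Concretely, if some direction $h^*$ has a unique full point $x^*$, first choose $h_1=h^*$ away from $x^*$, using $\mathcal{H}^{s-1}(K)>0$ and Lemma \ref{thm1} to get dimension $s-1$ pieces elsewhere on which $h^*$ is Lipschitz-like. Shrinking $\delta_j\to0$ and taking the $K_j$ in disjoint intervals gives (1) and (2). A limiting argument as in \cite{LS}, using full points that accumulate rather than being removed, gives (3) for the final set $K\setminus\bigcup_j K_j$.

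\emph{Main obstacle.} The step I expect to be hardest is controlling, uniformly over the uncountable family $\Omega$, that removing the $K_j$ never destroys dimension $s$. I would handle this exactly via the dichotomy above: infinitely many full $s$-dimensional points is robust, and the finitely-many case concerns at most finitely many exceptional directions/points, because two independent directions span $V$ and Lemma \ref{lemL} links their local dimensions. Those exceptional points are kept away from all $\overline{\operatorname{conv}(K_j)}$.
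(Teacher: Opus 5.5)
\textbf{There is a genuine gap in Case C.} Cases A and B are fine (in B, read $\Omega$ as a set of functions when $\dim V=1$). Case C carries the whole difficulty, and nothing in it controls the uncountably many directions of $\Omega$ at once.

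\emph{Non-uniform $\delta$.} Your conditions of the form ``the same holds once $\delta$ is small'' are per direction, so they produce no single $\delta_j$.

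\emph{``At most one bad direction'' is false.} Let $K$ contain relatively clopen pieces $A_k$ with $\dim_H A_k=s-1$ and disjoint convex hulls accumulating at a point $x^*$. Let $f$ have graph dimension $s-\varepsilon_k$ on the odd pieces ($\varepsilon_k\downarrow0$, amplitudes tending to $0$) and vanish elsewhere, and let $g$ do the same on the even pieces. Then every nonzero $af+bg$ lies in $H_s(K)$ and has $x^*$ as its \emph{only} full $s$-dimensional point. So every direction is bad at $x^*$, and your choice of $x_1$ is forced to be $x^*$.

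\emph{The fallback does not cover the general case.} Working away from $x^*$ can rescue this example, but only because some region is free of full points of \emph{all} directions. When the set $\mathcal{P}_{\{f,g\}}$ of full points over $\Omega$ is infinite, different directions are pinned at different points, and your plan does not say what to do. The claim that the pinned points form a countable set is unproved. It would also be insufficient: a countable dense set of pinned points leaves no region to retreat to.

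\emph{The limit step fails.} The invariant $\dim_H G_h(K^{(j)})=s$ for every finite $j$ does not pass to $\bigcap_j K^{(j)}$. In the example, neighbourhoods accumulating at $x^*$ may swallow $A_1,A_2,\dots$ one at a time. Then $x^*$ stays full at every finite stage, yet no direction has dimension $s$ on the limit set. ``A limiting argument as in \cite{LS}'' does not fill this, and the paper uses no limit at all.

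\textbf{The missing idea} is the mechanism that makes choices uniform in the direction. If some $h_1\in V$ is nearly locally Lipschitz on a set $F$, then by Lemma~\ref{lemL} every $h\in V\setminus\mathbb{R}h_1$ satisfies $\dim_H G_h(F)=\dim_H G_{h_2}(F)$ for one fixed $h_2$. So a single number governs all directions on $F$.

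The paper uses this in a one-shot case analysis: first whether $f$ or $g$ has infinitely many full $s$-dimensional points, then whether $\mathcal{P}_{\{f,g\}}$ is finite. Each case ends in one of two ways.
\begin{itemize}
\item All the $K_j$ are placed inside one set on which every combination has graph dimension $<s$, so $\dim_H G_h(\bigcup_j K_j)<s$ uniformly. This happens in Case 1 a1, in b1 (with $F$ in a gap of the finite set $\mathcal{P}_{\{f,g\}}$), and in the first subcase of b2.
\item A reserve $R$ is built on which every direction in $\Omega$ keeps dimension $s$, and the $K_j$ are chosen in $K\setminus R$ using $\mathcal{H}^{s-1}(K\setminus R)>0$. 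In a2, $R=\bigcup_j K^{(1)}_j$ from Lemma~\ref{cor1}. In b2, $R=\bigcup_n F_n$, where the $F_n$ lie around distinct full points $x_n$ of pairwise independent directions $a_nf+b_ng$, each nearly locally Lipschitz on its $F_n$. The $F_n$ are shrunk so that $\dim_H G_g(F_n)<s$ for each $n$ while the supremum is still $s$. Dropping the single index where a given direction is Lipschitz then does not lower the supremum.
\end{itemize}
You noticed that Lemma~\ref{lemL} links the directions. But it has to be used to build such a uniformly small region or reserve, not to bound the number of bad directions.
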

\begin{proof}
We will prove it by considering the following two cases according to the number of full $s$-dimensional points of $f$ and $g$.

\medskip
\noindent \textbf{Case 1.} At least one of $f$ and $g$ has infinitely many full $s$-dimensional points.

Without loss of generality, let $f$ be the function with infinitely many full $s$-dimensional points.
By virtue of Remark \ref{remark2} and Lemma \ref{cor1}, we can take a sequence of compact subsets $\{K_j^{(1)}\}_{j\ge1}$ that satisfies the properties $(1)$ and $(2)$ of this proposition. Additionally, $f$ is nearly locally Lipschitz on each $K_j^{(1)}$. We may assume that $\mathcal{H}^{s-1}\big(K \setminus \bigcup_{j\ge1} K_j^{(1)}\big) > 0$; if not, we may instead work with an appropriate compact subset of $K_j^{(1)}$ (for some $j$) with Hausdorff dimension $s-1$ and positive $\mathcal{H}^{s-1}$-measure.

\medskip
\noindent\textbf{a1)} Suppose $\dim_H G_g\big(\bigcup_{j\ge1} K_j^{(1)}\big) < s$.

Obviously, by Lemma \ref{lemL}, we have
\[
\begin{aligned}
\dim_H G_{af+bg}\Big(\bigcup_{j\ge1} K_j^{(1)}\Big) &=\sup_{j\geq1}\dim_H G_{bg} (K^{(1)}_j) \\
&\leq\dim_H G_g\Big(\bigcup_{j\ge1} K_j^{(1)}\Big)< s
\end{aligned}
\]
for any $(a,b) \in \mathbb{R}^2$.
Hence,
\[
\dim_H G_{af+bg}\Big(K \setminus \bigcup_{j\ge1} K_j^{(1)}\Big) = s
\]
for any $(a,b) \in \Omega_{\{f,g\}}(K)$. That is, $\{K_j^{(1)}\}_{j\ge1}$ satisfies all the properties of the proposition. In this subcase, just take $K_j = K_j^{(1)}$ for each $j\geq1$.

\medskip
\noindent \textbf{a2)} Suppose $\dim_H G_g\big(\bigcup_{j\ge1} K_j^{(1)}\big) = s$.

Then for each $(a,b) \in \mathbb{R}^2$ with $b \ne 0$, we have
\begin{equation}\label{propeq1}
\dim_H G_{af+bg}\Big(\bigcup_{j\ge1} K_j^{(1)}\Big)
= \dim_H G_g\Big(\bigcup_{j\ge1} K_j^{(1)}\Big) = s.
\end{equation}
Since $\mathcal{H}^{s}\big(K \setminus \bigcup_{j\ge1} K_j^{(1)}\big) > 0$, there exists a sequence $\{K_i^{(2)}\}_{i\ge1}$ of compact sets such that
\[
K_i^{(2)} \subseteq K \setminus \bigcup_{j\ge1} K_j^{(1)},\ \ \dim_H K_i^{(2)} = s-1
\]
for all $i\ge1$ and
\[
\operatorname{conv}(K_i^{(2)}) \cap \operatorname{conv}(K_j^{(2)}) = \emptyset \ \text{whenever} \ i\ne j,
\]
\[
\mathcal{H}^{s-1}\Big(K \setminus \Big(\bigcup_{j\ge1} K_j^{(1)} \cup \bigcup_{i\ge1} K_i^{(2)}\Big)\Big) > 0.
\]

If $f\in H_{<s}(K)$, then clearly $\Omega_{\{f,g\}}(K)=\{(a,b):b\ne 0\}$. Combining these with (\ref{propeq1}), it suffices to take $K_j=K^{(2)}_j$ for each $\geq 1$.

If $f\in H_s(K)$ (this implies that $(a,0)\in\Omega_{\{f,g\}}(K)$ for any $a\ne 0$, combining this with (\ref{propeq1}), we get $\Omega_{\{f,g\}}=\mathbb{R}^2\setminus\{(0,0)\}$ ) and $\dim_H G_f\big(\bigcup_{i\ge1} K_i^{(2)}\big) < s$,  since $f$ is nearly locally Lipschitz on each $K_j^{(1)}$,
\[
\dim_H G_f\Big(\bigcup_{j\ge1} K_j^{(1)}\Big)
= \sup_{j\ge1} \dim_H G_f(K_j^{(1)}) = s-1 < s.
\]
Therefore,
\[
\dim_H G_f\Big(K \setminus \Big(\bigcup_{j\ge1} K_j^{(1)} \cup \bigcup_{i\ge1} K_i^{(2)}\Big)\Big) = s.
\]
Hence
\[
af\big|_{K \setminus \bigcup_{j\ge1} K_j^{(2)}}\in H_s(K \setminus \bigcup_{j\ge1} K_j^{(2)})
\]
for any $a\ne 0$.
Combining these with (\ref{propeq1}), we get that
\[
\dim_H G_{af+bg}\Big(K \setminus \bigcup_{j\ge1} K_j^{(2)}\Big) = s
\]
for any $(a,b) \in \Omega_{\{f,g\}}(K)$.
At this moment, we take $K_j = K_j^{(2)}$ for each $j$. Then $\{K_j\}_{j\geq 1}$ satisfies all the properties of this proposition.

 If $f\in H_s(K)$  and $\dim_H G_f\big(\bigcup_{i\ge1} K_i^{(2)}\big) = s$, then take any sequence $\{K_j\}_{j\ge1}$ of compact sets such that
 \[K_j \subseteq K \setminus \Big(\bigcup_{j\ge1} K_j^{(1)} \cup \bigcup_{i\ge1} K_i^{(2)}\Big),\]
\[\dim_H K_j = s-1 \]
for all $j\geq1$ and
\[\operatorname{conv}(K_i) \cap \operatorname{conv}(K_j)=\emptyset, \ \forall i\neq j.\]
It is straightforward that $\{K_j\}_{j\geq 1}$ also satisfies the property $(3)$ of this proposition.

\medskip
\noindent \textbf{Case 2.} Both $f$ and $g$ have finitely many full $s$-dimensional points.

\medskip
Define $\mathcal{P}_{\{f,g\}}$ by
\[
\mathcal{P}_{\{f,g\}} = \left\{ x \in K : \exists (a,b)\in\Omega_{\{f,g\}}(K)\ \text{s.t. } x \text{ is a full $s$-dimensional point of } af+bg \right\}.
\]

Note that the set $\mathcal{P}_{\{f,g\}}$ may possibly be empty. If this situation occurs, it suffices to pick a sequence $\{K_j\}_{j\geq 1}$ of compact subsets of $K$ such that the properties $(1),(2)$ of this proposition hold, and then property $(3)$ holds automatically. Hence we may assume that $\mathcal{P}_{\{f,g\}}\neq\emptyset$.

\medskip
\noindent\textbf{b1)} Suppose $\# \mathcal{P}_{\{f,g\}} < +\infty$.

Since $\# \mathcal{P}_{\{f,g\}}$ is finite, without loss of generality, we may write it as $\{0<x_1 < x_2 < \dots < x_n<1\}$.
Consider the open intervals $I_k=(x_{k-1},x_k)$, $k=1,2,\dots,n+1$, where $x_0=0$, $x_{n+1}=1$.
By the assumption that $\mathcal{H}^{s-1}(K) > 0$, there exists some index $ k_0$ such that
\[
\mathcal{H}^{s-1}(K \cap I_{k_0}) > 0.
\]
Choose a compact subset $F$ of $K \cap I_{k_0}$ with $0 < \mathcal{H}^{s-1}(F) < \infty$. We claim that
\[
 \dim_H G_{af+bg}(F) < s
\]
for any $(a,b) \in \Omega_{\{f,g\}}(K)$. Indeed, if this failed, then $\mathcal{P}_{\{f,g\}} \cap F \ne \emptyset$ which yields a contradiction.
Hence we can take a sequence $\{K_j\}_{j\geq 1}$ of compact subsets of $F$ satisfying the following conditions:
\[
\dim_H K_j = s-1, \ \forall j\ge1
\]
and
\[
\operatorname{conv}(K_i) \cap \operatorname{conv}(K_j) = \emptyset, \ \forall i\ne j.
\]
Then $\{K_j\}_{j\ge1}$ is exactly the required sequence.

\medskip
\noindent\textbf{ b2)} Suppose $\# \mathcal{P}_{\{f,g\}} = +\infty$.

Take a sequence of pairwise distinct points $\{x_n\}_{n\ge1} \subseteq\mathcal{P}_{\{f,g\}}$.
For each $n\geq 1$, choose $(a_n,b_n) \in \Omega_{\{f,g\}}(K)$ such that $x_n$ is a full $s$-dimensional point of $a_n f + b_n g$.
Passing to a subsequence if necessary, we may assume that $\{(a_n,b_n)\}_{n\ge1}$ is pairwise linearly independent. Otherwise, there exists $(a',b') \in \Omega_{\{f,g\}}(K)$ such that $a'f+b'g$ has infinitely many full $s$-dimensional points, at this point, we consider the function pairs $\{a'f+b'g, g\}$ or $\{a'f+b'g, f\}$, which reduces to Case 1.
We may also assume that $a_n \ne 0$, $\forall n\ge1$ (or $b_n \ne 0$, $\forall n\ge1$) by taking a subsequence.

For each $n \in \mathbb{N}$, choose $r_n>0$ such that the family $\{B(x_n,r_n)\}_{n\ge1}$ is pairwise disjoint.
By Remark \ref{remark1}, there exists a compact set $F_n \subseteq B(x_n,r_n)\cap K$ satisfying
\begin{equation}\label{propeq2}
\dim_H F_n = s-1, \quad \mathcal{H}^{s-1}\Big(K \setminus \bigcup_{n\ge1} F_n\Big) > 0,
\end{equation}
and such that $a_n f + b_n g$ is nearly locally Lipschitz on $F_n$.
Since $a_n \ne 0$ for any $n\in\mathbb{N}$, we decompose
\[
af + bg = \frac{a}{a_n}(a_n f + b_n g) + \Big(b - \frac{b_n}{a_n}a\Big)g.
\]
Consequently, we obtain
\begin{equation}\label{propeq3}
\dim_H G_{af+bg}(F_n) =
\begin{cases}
\dim_H G_g(F_n), & a b_n-b a_n  \ne 0, \\
s-1, & a b_n-b a_n = 0
\end{cases}
\end{equation}
for any $(a,b)\in\mathbb{R}^2$.

If $\dim_H G_g\big(\bigcup_{n\ge1} F_n\big) < s$, then
\[
\dim_H G_{af+bg}\Big(\bigcup_{n\ge1} F_n\Big)
= \sup_{n\ge1} G_{af+bg}(F_n) < s.
\]
This further yields
\[
\dim_H G_{af+bg}\Big(K \setminus \bigcup_{n\ge1} F_n\Big) = s
\]
for any $(a,b)\in \Omega_{\{f,g\}}(K)$.
Combining this with (\ref{propeq2}) and the disjointness of the balls $\{B(x_n,r_n)\}_{n\geq 1}$, it suffices to set $K_j=F_j$ for all $j\in\mathbb{N}$.

If $\dim_H G_g\big(\bigcup_{n\ge1}F_n\big)=s$,
we may assume without loss of generality that $\dim_H G_g(F_n) < s$ for any $n\in\mathbb{N}$. Otherwise, suppose $\dim_H G_g(F_n)=s$ for some integer $n$. By Lemma \ref{thm1}, we consider the restriction $g|_{F_n}$, which yields a compact subset $F_n'\subseteq F_n$  with $\dim_H F'_n=s-1$ such that $g$ is nearly locally Lipschitz on $F'_n$. Consequently, $\dim_H G_g(F'_n) < s$. If $\dim_H G_g(F_n)<s$ already holds, we simply set $F_n'=F_n$ and revisit the value of $\dim_H G_g\big(\bigcup_{n\ge1}F_n'\big)$). Since $\{(a_n,b_n)\}_{n\ge1}$ is pairwise linearly independent, for any given $(a,b)\in \mathbb{R}^2\setminus\{(0,0)\}$, the inequality
\[
 ab_n -ba_n \ne 0
\]
holds for all but at most one index $n$ (denoted by $n_0$ if such an index exists).
Therefore, in view of (\ref{propeq3}), we have for any $(a,b)\in \mathbb{R}^2\setminus\{(0,0)\}$,
\[
\begin{aligned}
\dim_H G_{af+bg}\Big(\bigcup_{n\ge1} F_n\Big)
&= \sup_n \dim_H G_{af+bg}(F_n) \\
&= \max\Big\{s-1, \sup_{n\ne n_0} \dim_H G_g(F_n)\Big\} \quad (\text{if } n_0 \text{ exists}) \\
&=\max\Big\{s-1,  \dim_H G_g(\bigcup_{n\neq n_0}F_n)\Big\}  \\
&= \dim_H G_g\big(\bigcup_{n\ge1} F_n\big) = s.
\end{aligned}
\]
Since $\mathcal{H}^{s-1}\big(K \setminus \bigcup_{n\ge1} F_n\big) > 0$, we may choose a sequence of compact sets $\{K_j\}_{j\ge1}$ satisfying
\[
K_j \subseteq K \setminus \bigcup_{n\ge1} F_n,
\]
\[
\dim_H K_j = s-1,
\]
for any $j\geq 1$ and
\[
\operatorname{conv}(K_i) \cap \operatorname{conv}(K_j) = \emptyset, \ \forall i\ne j.
\]
It follows immediately that
\[
\dim_H G_{af+bg}\Big(K \setminus \bigcup_{j\ge1} K_j\Big)
\ge \dim_H G_{af+bg}\Big(\bigcup_{j\ge1} F_j\Big) = s
\]
holds for any $(a,b) \in \mathbb{R}^2 \setminus \{(0,0)\}$, and of course holds for all $(a,b) \in \Omega_{\{f,g\}}(K)$.
\end{proof}

\begin{proposition}\label{sec4prop2}
Let $s\in(1,2]$ and $n\in\mathbb{N}$. Let $K\subseteq[0,1]$ be a compact set with $\mathcal{H}^{s-1}(K) > 0$. Suppose that $\{f_i\}_{i=1}^n\subseteq C(K)$. Then for any $m\in\mathbb{N}$, there exist $m$ compact subsets $\{K_j\}_{j=1}^m$ of $K$ satisfying the following properties:
\begin{enumerate}
\item $\dim_H K_j=s-1$, $\forall$ $j=1,2,\dots,m$;
\item $\operatorname{conv}(K_i)\cap \operatorname{conv}(K_j)=\emptyset$ whenever $i\neq j$;
\item for any $(a_1,\dots,a_n)\in\mathbb{R}^n$,
\[
\left.\sum_{i=1}^n a_i f_i\right|_{\bigcup_{j=1}^m K_j}
\in H_{<s}\Big(\bigcup_{j=1}^m K_j\Big).
\]
\end{enumerate}
\end{proposition}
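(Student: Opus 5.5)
I will prove Proposition \ref{sec4prop2} by induction on $n$, using the Restriction Theorem (Lemma \ref{thm1}) and the locality of Hausdorff dimension. The key observation is this: if $F$ is compact and each $f_i$ is nearly locally Lipschitz on $F$, then every linear combination $\sum a_i f_i$ is nearly locally Lipschitz on $F$. Indeed, the exceptional set is still countable, and a sum of locally Lipschitz functions is locally Lipschitz. By Lemma \ref{lemL}, applied with $g=0$, we then get ${\dim}_H G_{\sum a_if_i}(F)={\dim}_H G_0(F)={\dim}_H F$. So it suffices to find compact sets $K_j$ with ${\dim}_H K_j=s-1$ and pairwise disjoint convex hulls, on which all $f_i$ are simultaneously nearly locally Lipschitz. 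Then on $\bigcup_{j=1}^m K_j$, every combination has graph dimension $s-1<s$.

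\textbf{Step 1: a common compact set.} Start with a compact $F_0\subseteq K$ of dimension $s-1$, choosing it with $\mathcal{H}^{s-1}(F_0)>0$ (possible since $\mathcal{H}^{s-1}(K)>0$). Inductively, given compact $F_{i-1}$ with ${\dim}_H F_{i-1}=s-1$, I consider $f_i|_{F_{i-1}}$.
\begin{itemize}
\item If ${\dim}_H G_{f_i}(F_{i-1})=s$, Lemma \ref{thm1} applied on $F_{i-1}$ gives a compact $F_i\subseteq F_{i-1}$ with ${\dim}_H F_i=s-1$ on which $f_i$ is nearly locally Lipschitz.
\item If ${\dim}_H G_{f_i}(F_{i-1})<s$, I would like to apply Lemma \ref{thm1} with the exponent $s'={\dim}_H G_{f_i}(F_{i-1})\ge s-1+1=s$. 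This gives $s'\ge s$, contradicting $s'<s$, so this case cannot occur: the graph dimension is always at least ${\dim}_H F_{i-1}+1$, since projection onto the $x$-axis is Lipschitz.
\end{itemize}
Careful: ${\dim}_H F_{i-1}=s-1$ gives graph dimension $\ge s$, while the graph lies in $F_{i-1}\times\mathbb{R}$. That product has dimension possibly larger than $s$, so in general ${\dim}_H G_{f_i}(F_{i-1})=s''\ge s$. I therefore need a version of Lemma \ref{thm1} for $f\in H_{s''}(F)$ that extracts a subset of dimension $s-1$ (rather than $s''-1$) on which $f$ is Lipschitz. This is where I expect the main obstacle.

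\textbf{The obstacle and how I would handle it.} My first attempt is to use the structure in the proof of Lemma \ref{thm1}, which presumably decomposes $F=\bigcup_k\{x: |f(x)-f(y)|\le k|x-y| \text{ for } y\in F\cap B(x,1/k)\}$ up to a piece where the graph is large. Alternatively, I would deduce it directly as follows. The sets $E_k=\{x\in F: |f(x)-f(y)|\le k|x-y|\ \forall y\in F,\ |y-x|<1/k\}$ are closed, and $f$ is Lipschitz locally on each $E_k$. If ${\dim}_H\bigcup_k E_k=s-1$, then by countable stability some $E_k$ has dimension close to $s-1$. This approach lacks exactness, so instead I would invoke Lemma \ref{thm1} on a suitable subset. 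I would first shrink $F_{i-1}$ to a compact subset $F'$ with ${\dim}_H F'=s-1$ and ${\dim}_H G_{f_i}(F')=s$, or else ${\dim}_H G_{f_i}(F')$ is as large as needed. If the graph dimension exceeds $s$ on every such subset, I would apply Lemma \ref{thm1} with $s''$, noting that $H_{s''}$ with ${\dim}_H F'=s-1<s''-1$ still yields, via Remark \ref{remark1}-type arguments, a subset of dimension $s-1$. Realistically, I would follow the proof of Lemma \ref{thm1} from \cite{LS} and check that it produces a subset of full dimension ${\dim}_H F$ whenever $\mathcal{H}^{{\dim}_HF}(F)>0$, independently of the graph dimension. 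The $\mathcal{H}^{s-1}$-positivity is the reason I keep $\mathcal{H}^{s-1}(F_i)>0$ along the induction, shrinking to positive finite measure at each stage.

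\textbf{Step 2: splitting into $m$ pieces.} Having $F_n$ with ${\dim}_H F_n=s-1$ and all $f_i$ nearly locally Lipschitz on it, I would use that Hausdorff dimension $s-1$ is attained locally. There are infinitely many points $x$ with ${\dim}_H(F_n\cap B(x,\delta))=s-1$ for all $\delta$, by countable stability together with the fact that $F_n$ is uncountable and can be arranged with no isolated "full" points. I pick $m$ such points, small disjoint intervals around them, and compact subsets $K_j\subseteq F_n\cap \overline{B(x_j,\delta)}$ of dimension $s-1$ (by countable stability, taking a suitable compact piece). These have disjoint convex hulls. Nearly local Lipschitzness passes to subsets, and finite unions preserve it because the hulls are separated, so the observation in the first paragraph yields property (3).
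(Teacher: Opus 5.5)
Your sufficient condition in the first paragraph is fine, but Step~1 rests on a false inequality, and the strategy cannot work in general. The Lipschitz projection $(x,y)\mapsto x$ only gives $\dim_H G_f(F)\ge\dim_H F=s-1$, not $\ge s$. Since $G_f(F)\subseteq F\times[-\|f\|_\infty,\|f\|_\infty]$, (\ref{sec2equ1}) gives $\dim_H G_f(F)\le s$. So the ``$s''>s$'' obstacle you worry about never arises. The case $\dim_H G_{f_i}(F_{i-1})<s$, which you dismiss, genuinely occurs, and there your goal can be unattainable.

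For an example, take $K=C$ the middle-thirds Cantor set, $d=\log2/\log3$, $s=1+d$, $n=1$, and $f_1$ the Cantor function restricted to $C$. Its graph lies on a rectifiable curve, so $\dim_H G_{f_1}(C)\le 1<s$. Now $f_1$ maps each level-$k$ basic piece of $C$ (diameter $3^{-k}$) onto a dyadic interval of length $2^{-k}$, whose preimage in $C$ is that piece plus at most two points. Pulling back covers therefore gives $\dim_H E\le d\,\dim_H f_1(E)$ for every $E\subseteq C$. If $f_1$ were nearly locally Lipschitz on $E$, you would also have $\dim_H f_1(E)\le\dim_H E$, forcing $\dim_H E=0$. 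So already for $n=1$ no set of dimension $s-1$ carries $f_1$ as a nearly locally Lipschitz function, although the proposition is trivial here. The variant of Lemma~\ref{thm1} you hoped to extract, valid whenever $\mathcal{H}^{\dim_H F}(F)>0$ ``independently of the graph dimension'', is false.

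The missing idea is to work with linear combinations rather than individual functions, as in the Claim inside the paper's proof. On the current compact set, ask whether some nonzero combination of the not-yet-eliminated functions has graph dimension $s$. If so, apply Lemma~\ref{thm1} to that combination, pass to the subset of dimension $s-1$ it provides, and eliminate one function by a change of basis; if not, stop. After at most $n$ rounds every combination is a nearly locally Lipschitz function plus an element of $H_{<s}$, and Lemma~\ref{lemL} gives property (3). Testing the $f_i$ one at a time would not suffice: Lemma~\ref{lemL} only lets you discard nearly locally Lipschitz summands, and nothing controls the graph dimension of a sum of two functions each in $H_{<s}$.

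A secondary problem is the order of your steps. Lemma~\ref{thm1} gives no control on $\mathcal{H}^{s-1}$, so you cannot keep $\mathcal{H}^{s-1}(F_i)>0$, and without it Step~2 can fail. For instance, for $\{0\}\cup\bigcup_k C_k$ with compact $C_k\subseteq[2^{-k-1},2^{-k}]$ and $\dim_H C_k\uparrow s-1$, every compact subset of dimension $s-1$ contains $0$, so no two have disjoint hulls. The paper instead first produces the $m$ pieces and then runs the restriction procedure inside each piece separately. Since Lemma~\ref{thm1} needs no measure hypothesis, this ordering removes the difficulty.
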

\begin{proof}
Choose a compact set $F\subseteq K$ with ${\dim}_HF=s-1$ and $\mathcal{H}^{s-1}(F) > 0$, and by passing to the restriction functions $\{f_i|_F\}_{1\leq i\leq n}$, we may further assume without loss of generality that ${\dim}_HK=s-1$. The proof is split into two cases, based on whether the functions in $\{f_i\}_{i=1}^n$ admit infinitely many full $s$-dimensional points.

\medskip
\noindent\textbf{Case 1.}
At least one function in $\{f_i\}_{i=1}^n$ admits infinitely many full $s$-dimensional points. By rebeling if necessary, we take $f_1$ to be such a function.

By Lemma \ref{cor1}, there exist compact subsets $\{K_j^{(1)}\}_{j=1}^m$ satisfying
\begin{itemize}
\item $\dim_H K_j^{(1)}=s-1,\ \forall\,j=1,2,\dots,m$;
\item $\operatorname{conv}(K_i^{(1)})\cap \operatorname{conv}(K_j^{(1)})=\emptyset$ whenever $i\neq j$;
\item $f_1$ is nearly locally Lipschitz on each $K_j^{(1)}$.
\end{itemize}

\begin{claim}\label{claim}
For each $j\in\{1,2,\dots,m\}$, there exists a compact set $K_j\subseteq K_j^{(1)}$ such that
\[
\dim_H K_j=s-1 \quad \text{and} \quad
\left.\sum_{i=1}^n a_i f_i\right|_{K_j}\in H_{<s}(K_j)
\]
for any $(a_1,\dots,a_n)\in\mathbb{R}^n$.
\end{claim}

\noindent\textbf{Proof of the Claim.}
Fix any $j\in\{1,2,\dots,m\}$.

\textbf{a1)} Suppose that for any $(a_2,\dots,a_n)\in\mathbb{R}^{n-1}$, $\displaystyle\left.\sum_{i=2}^n a_i f_i\right|_{K_j^{(1)}}
\in H_{<s}(K_j^{(1)})$.

Since $f_1$ is nearly locally Lipschitz on $K_j^{(1)}$, Lemma \ref{lemL} implies that
\[
\left.\sum_{i=1}^n a_i f_i\right|_{K_j^{(1)}}\in H_{<s}(K_j^{(1)})
\]
for any $(a_1,\dots,a_n)\in\mathbb{R}^n$.
In this case, we simply set $K_j=K_j^{(1)}$.

\textbf{a2)} Suppose that there exists $(a_2^{(1)},\dots,a_n^{(1)})\in\mathbb{R}^{n-1}$ such that
\[
\left.\sum_{i=2}^n a_i^{(1)} f_i\right|_{K_j^{(1)}}\in H_s(K_j^{(1)}).
\]
Note that we have $(a_2^{(1)},\dots,a_n^{(1)})\neq(0,\dots,0)$. Without loss of generality, we assume that $a_2^{(1)}\neq0$. By Lemma \ref{thm1}, there exists a compact subset $K_j^{(2)}\subseteq K_j^{(1)}$ with
$\dim_H K_j^{(2)}=s-1$
on which
$\sum_{i=2}^n a_i^{(1)} f_i$  is nearly locally Lipschitz.

Observe that for any $(a_2,\dots,a_n)\in\mathbb{R}^{n-1}$,
\[
\sum_{i=2}^n a_i f_i
= \frac{a_2}{a_2^{(1)}} \sum_{i=2}^n a_i^{(1)} f_i
+ \sum_{i=3}^n \left(a_i - \frac{a_2}{a_2^{(1)}}a_i^{(1)}\right) f_i.
\]
If $\displaystyle\left.\sum_{i=3}^n a_i f_i\right|_{K_j^{(2)}}
\in H_{<s}(K_j^{(2)})$ holds for all $(a_3,\dots,a_n)\in\mathbb{R}^{n-2}$, it suffices to take $K_j=K_j^{(2)}$.
Otherwise, we iterate the procedure described in a2) and a1). By induction, we find some integer $k_0\in\{1,2,\dots,n\}$, nested compact sets $K_j^{(1)}\supseteq K_j^{(2)}\supseteq\dots\supseteq K_j^{(k_0)}$  with $\dim_H K_j^{(k)}=s-1$, and non-zero coefficients $\big(a_k^{(k-1)},a_{k+1}^{(k-1)},\dots,a_n^{(k-1)}\big)$ such that for each $k$, the linear combination $\displaystyle\left.\sum_{i=k}^n a_i^{(k-1)} f_i\right|_{K_j^{(k-1)}}$ belongs to $H_s\big(K_j^{(k-1)}\big)$ and is nearly locally Lipschitz on $K^{(k)}_j$.
If $k_0<n$, we additionally have
\[
\left.\sum_{i=k_0+1}^n a_i f_i\right|_{K_j^{(k_0)}}
\in H_{<s}\big(K_j^{(k_0)}\big),\quad \forall (a_{k_0+1},\dots,a_n)\in\mathbb{R}^{n-k_0}.
\]
Then $K_j^{(k_0)}$ is the desired compact set. Indeed, for any $(a_1,\dots,a_n)\in\mathbb{R}^n$, we can write $\sum_{i=1}^n a_i f_i$ as a combination of $f_1$, the linearly locally Lipschitz functions, and a remainder in $H_{<s}(K^{k_0}_j)$.
By Lemma \ref{lemL}, we get that
\[
\left.\sum_{i=1}^n a_i f_i\right|_{K_j^{(k_0)}}\in H_{<s}(K_j^{(k_0)}).
\]
Thus the  claim holds with $K_j=K_j^{(k_0)}$.

\medskip
Back to Case 1.
By the construction of $\{K_j\}_{j=1}^m$, the properties (1) and (2) in this proposition are obviously satisfied. Combining this with Claim \ref{claim} and the stability of Hausdorff dimension, property (3) follows as well.

\medskip
\noindent\textbf{Case 2.}
Each function $f_i$ has finitely many full $s$-dimensional points.

Define
\[
\mathcal{P}_{\{f_i\}} = \Big\{ x\in K : \exists\ \text{non-zero}\ (a_i)
\ \text{s.t.}\ x \text{ is a full $s$-dimensional point of } \sum_{i=1}^n a_i f_i \Big\}.
\]

\textbf{b1)} If $\#\mathcal{P}_{\{f_i\}}<+\infty$,
by an argument analogous to that in Proposition \ref{sec4prop1}, we can construct compact sets $\{K_j\}_{j=1}^m$ satisfying all the required properties.

\medskip
\textbf{b2)} If $\#\mathcal{P}_{\{f_i\}}=+\infty$,
take $m$ pairwise distinct points $x_1,x_2,\dots,x_m\in\mathcal{P}_{\{f_i\}}$. For each $j=1,\dots,m$, choose a non-zero vector
$(a_1^{(j)},a_2^{(j)},\dots,a_n^{(j)})$
such that $x_j$ is a full $s$-dimensional point of $\displaystyle\sum_{i=1}^n a_i^{(j)} f_i$.
Without loss of generality, assume $a_1^{(j)}\neq0$ for all $j\in\{1,2,\dots,m\}$.
Choose $r_j>0$ so that the balls $B(x_j,r_j)$ are pairwise disjoint.
By Remark \ref{remark1}, there exist compact sets $K_j'\subseteq B(x_j,r_j)\cap K$ such that
\begin{equation}\label{prop2eq1}
\dim_H K_j'=s-1, \quad \operatorname{conv}(K_i')\cap \operatorname{conv}(K_j')=\emptyset,\ \forall\,i\neq j,
\end{equation}
and the function
\[
g_j=\sum_{k=1}^n a_k^{(j)} f_k\]
is nearly locally Lipschitz on $K_j'$.
For each $j\in\{1,2,\dots,m\}$, consider the family $\{g_j,f_2,\dots,f_n\}$
(note that $\operatorname{span}\{g_j,f_2,\dots,f_n\}=\operatorname{span}\{f_1,f_2,\dots,f_n\}$),  by applying Claim \ref{claim},  we obtain a compact subset $K_j\subseteq K_j'$ such that
\begin{equation}\label{prop2eq2}
\dim_H K_j=s-1
\end{equation}
and
\[
 \left.\Big(a_1 g_j + \sum_{k=2}^n a_k f_k\Big)\right|_{K_j}\in H_{<s}(K_j),\ \forall\ (a_1,\dots,a_n)\in\mathbb{R}^n,
\]
which further implies
\[
\left.\sum_{k=1}^n a_k f_k\right|_{K_j}\in H_{<s}(K_j)
\]
for all $(a_1,\dots,a_n)\in\mathbb{R}^n$.
This concludes that
\begin{equation}\label{prop2eq3}
\left.\sum_{k=1}^n a_k f_k\right|_{\bigcup_{j=1}^m K_j}
\in H_{<s}\Big(\bigcup_{j=1}^m K_j\Big),\ \forall\ (a_1,\dots,a_n)\in\mathbb{R}^n.
\end{equation}
It follows from (\ref{prop2eq1}), (\ref{prop2eq2}), (\ref{prop2eq3}) and the hypotheses of the proposition that the family $\{K_j\}_{j=1}^m$ fulfills all the required assertions of the proposition. This completes the proof.
\end{proof}
Now we state and prove our main results of this section.
\begin{theorem}\label{thm4} Let $1<s\leq t\leq 2$. Then $H_s[0,1]\cap \overline{B}_t[0,1]$ is $(p, \mathfrak{c})$-spaceable for $p=1,2$.
\end{theorem}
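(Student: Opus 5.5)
Fix $p\in\{1,2\}$ and a $p$-dimensional subspace $W_p=\operatorname{span}\{f_1,\dots,f_p\}\subseteq \big(H_s[0,1]\cap\overline{B}_t[0,1]\big)\cup\{0\}$. We must build a closed $\mathfrak{c}$-dimensional subspace $W\supseteq W_p$ inside the same set. We aim for a basic sequence $(e_k)_{k\ge1}$ of functions with pairwise disjoint supports, all contained in a small region where every element of $W_p$ is ``dimensionally harmless''. Put $W=W_p+\overline{\operatorname{span}}(e_k)$. By Lemma \ref{lembasic}, $\{f_1,\dots,f_p,e_1,e_2,\dots\}$ is then a basic sequence, so $W$ is closed and infinite dimensional. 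A closed infinite-dimensional subspace of a separable Banach space has dimension $\mathfrak{c}$. For Lemma \ref{lembasic} we also need the classes of $f_1,\dots,f_p$ to be independent modulo $X_0=\overline{\operatorname{span}}(e_k)$; this will follow because the $e_k$ vanish outside a small interval where the $f_i$ are not all annihilated. One can also shrink the region to force this.

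\textbf{Step 1 (locating a good region).} For the upper box dimension, Lemma \ref{sec3prop1} with $s$ replaced by $t$ gives an open interval $I$ such that every nonzero $h\in W_p$ satisfies $h|_{[0,1]\setminus I}\in\overline{B}_t([0,1]\setminus I)$. For the Hausdorff dimension, set $K_0=[0,1]\setminus I$ (or a compact piece of it) and apply Proposition \ref{sec4prop1} for $p=2$ (or Lemma \ref{lemx} for $p=1$). This needs some compact $K\subseteq[0,1]$ with $\dim_H K=s-1$ and $\mathcal{H}^{s-1}(K)>0$ on which $\dim_H G_{h}(K)=s$ for all nonzero $h\in W_p$. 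Producing such a $K$ is the main obstacle. Our first attempt: by Lemma \ref{thm1}, full $s$-dimensional points, and Remark \ref{remark1}, restrict to a neighbourhood of a full-dimensional point of $f_1$ where $\dim_H$ of the graphs stays $s$. Then take $K$ to be a suitable compact piece of $[0,1]\setminus I$ together with a Cantor set carrying positive $\mathcal{H}^{s-1}$-measure. If no single $K$ works, we fall back on the set $\Omega_{\{f,g\}}$ formalism, which is designed for this. The output is a sequence of compact sets $K_j$ with $\dim_H K_j=s-1$, disjoint convex hulls contained in $I$ (after shrinking), such that every nonzero $h\in W_p$ keeps Hausdorff dimension $s$ and upper box dimension $t$ on the complement of $\bigcup_j\operatorname{conv}(K_j)$. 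Here we intersect the two ``good'' complements, which is possible since both conditions only remove small sets.

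\textbf{Step 2 (building the $e_k$).} Choose, inside each $K_j$, a compact $K_j'$ with $\dim_H K_j'=s-1$ and $\overline{\dim}_B K_j'\le t-1$. Lemma \ref{Ni-W} or a simple Cantor construction supplies such a set inside a subinterval with enough measure; a set of box dimension exactly $s-1$ works. By Lemmas \ref{balk1} and \ref{balk2}, pick $\phi_j\in C(K_j')$ with $\dim_H G_{\phi_j}=s$ and $\overline{\dim}_B G_{\phi_j}\le t$, vanishing at the endpoints. Let $e_j$ be its linear extension, set to zero outside $\operatorname{conv}(K_j')$, as in Theorem \ref{thmn2}.

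\textbf{Step 3 (verification).} Write an element as $F=h+\sum_k a_k e_k\neq0$.
\begin{itemize}
\item Hausdorff dimension: every $e_k$ is piecewise linear off $\bigcup K_j'$, so the graph of $F$ is $s$-dimensional. If $h\neq0$, restrict to the complement of the convex hulls, where $F=h$ and Step 1 gives dimension $s$. If $h=0$, restrict to some $K_j'$ with $a_j\ne0$, where $F=a_je_j$. The upper bound $\dim_H\le s$ comes from countable stability: on each piece, $F$ is $h$ plus a function whose graph has dimension at most $s$. Near accumulation points we use Lemma \ref{lemL}, since $F-h$ is locally Lipschitz off $\bigcup K_j'\cup\{\text{limit point}\}$, together with the bound $\dim_H G_h\le s$ on the pieces.
\item Upper box dimension: Lemma \ref{lemfal-fr} gives $\le t$. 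For the lower bound, $F=h$ off $I$ when $h\ne0$. When $h=0$, the value $t$ must be attained, so we choose the $K_j'$ with $\overline{\dim}_B=t-1$ and use Lemma \ref{lemLL}.
\end{itemize}
We expect the delicate point to be ensuring that the Hausdorff upper bound survives the infinite sum near the accumulation point of the intervals.
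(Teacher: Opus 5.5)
Your reduction via Lemma \ref{lembasic}, and the quotient-independence argument through $[0,1]\setminus I$, are fine. The proposal nevertheless has two genuine gaps, in Steps 1 and 2.

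\textbf{Step 1.} You place $K$ in $[0,1]\setminus I$. But the sets produced by Proposition \ref{sec4prop1} lie inside $K$, while Step 3 needs the supports of the $e_k$ inside $I$; that is how you get $F=h$ off $I$. So $K$ must be taken inside $I$.

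More seriously, what you call the main obstacle, a compact $K$ with $\dim_H K=s-1$, $\mathcal{H}^{s-1}(K)>0$ and $\dim_H G_h(K)=s$ for every nonzero $h\in W_p$, is not a hypothesis of Proposition \ref{sec4prop1}. It is also unavailable in general. If $f_1\in H_s[0,1]$ is $(2-s)$-H\"older with $s<2$ (e.g.\ of Weierstrass type), then $\dim_H G_{f_1}(K)\le \dim_H K+1-(2-s)=2s-2<s$ for every such $K$.

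The missing idea is the paper's dichotomy. Take any compact $K\subseteq I$ with $\dim_H K=s-1$, $\overline{\dim}_B K=t-1$ and $\mathcal{H}^{s-1}(K)>0$, and apply Proposition \ref{sec4prop1} to it.
\begin{itemize}
\item If $(a,b)\in\Omega_{\{f_1,f_2\}}(K)$, conclusion (3) gives $\dim_H G_h(K\setminus\bigcup_j K_j)=s$.
\item If $(a,b)\notin\Omega_{\{f_1,f_2\}}(K)$ and $(a,b)\neq(0,0)$, then $\dim_H G_h(K)<s$ while $h\in H_s[0,1]$. Countable stability then gives $\dim_H G_h([0,1]\setminus K)=s$.
\end{itemize}
Either way $h$ keeps dimension $s$ off $\bigcup_j K_j$. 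The same dichotomy with Lemma \ref{lemx} handles $p=1$, where you would otherwise need $f_1\in H_s(K)$.

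Note that this gives full dimension off $\bigcup_j K_j$, not off $\bigcup_j\operatorname{conv}(K_j)$ as you assert. So in Step 3 the lower bound must be taken on $[0,1]\setminus\overline{\bigcup_j K_j'}$ via Lemma \ref{lemL}, since the $e_k$ are affine but nonzero on the gaps. For Lemma \ref{lemL} to apply, you must also arrange, by passing to a subsequence, that the $K_j$ accumulate at a single point.

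\textbf{Step 2.} When $h=0$ you need each $e_j\in\overline{B}_t[0,1]$. That requires $K_j'\subseteq K_j$ with $\dim_H K_j'=s-1$ and $\overline{\dim}_B K_j'=t-1$. When $s<t$, nothing guarantees that the sets delivered by Proposition \ref{sec4prop1} (built via the restriction theorem) contain such a subset, since their upper box dimension may be well below $t-1$.

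The paper enlarges rather than shrinks. It adjoins to each $K_j$ an affine copy $E_j\subseteq\operatorname{conv}(K_j)$ of a fixed compact $E$ with $\dim_H E=0$ and $\overline{\dim}_B E=t-1$. This is harmless for the Hausdorff bookkeeping, because any graph over $\bigcup_j E_j$ has Hausdorff dimension at most $1<s$.

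Your appeal to Lemma \ref{lemfal-fr} also needs $\overline{\dim}_B G_{\sum_k a_k e_k}([0,1])\le t$, i.e.\ $\overline{\dim}_B\overline{\bigcup_j K_j'}\le t-1$. Upper box dimension is only finitely stable, so this is a property of the whole union, not of the pieces. The paper secures it from $K_j\subseteq K$ with $\overline{\dim}_B K=t-1$, and by shrinking the $E_j$ at rate $2^{-j}$ toward the accumulation point.

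This box-dimension control is the real infinite-sum difficulty. The Hausdorff upper bound is routine: use $\dim_H G_F(\bigcup_j K_j')\le \dim_H(\bigcup_j K_j')+1=s$ for your $F$, and Lemma \ref{lemL} off that set. By contrast, ``$h$ plus a function whose graph has dimension at most $s$'' is not a valid argument, since Lemma \ref{lemfal-fr} has no Hausdorff-dimension counterpart.
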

\begin{proof} We only prove the case $p=2$; the case $p=1$ follows  from Lemma \ref{lemx} via an analogous argument. Let $f_1,f_2\in H_s[0,1]\cap \overline{B}_t[0,1]$ be linearly independent and
\begin{equation}\label{thm3equn2}
\operatorname{span}\{f_1,f_2\}\subseteq \big(H_s[0,1]\cap \overline{B}_t[0,1]\big)\cup\{0\}.
\end{equation}
In view of Lemma \ref{sec3prop1}, there exists a nonempty open interval $I\subseteq[0,1]$ such that
\begin{equation}\label{Sec4eq2}
(af_1+bf_2)|_{[0,1]\setminus I} \in \overline{B}_t([0,1]\setminus I)
\end{equation}
for any $(a,b)\neq(0,0)$. 
Take a compact subset $K\subseteq I$ such that
\[{\dim}_HK=s-1,\ \overline{\dim}_BK=t-1\ \text{and}\ \mathcal{H}^{s-1}(K)>0.
\]
Such a set exists by Lemma \ref{Ni-W} when $s\neq t$, if $s=t$, we may instead take a central cantor set with an appropriate contraction ratio.
Let
\[
\Omega_{\{f_1,f_2\}}(K)=\{(a,b)\in\mathbb{R}^2:af_1+bf_2\in H_s(K)\}.
\]
By Proposition \ref{sec4prop1}, there exists a sequence  $\{K^{(1)}_j\}_{j\geq 1}$ of compact subsets of $K$ such that
\begin{enumerate}
    \item $\dim_H K^{(1)}_j = s-1$, $\forall j\ge1$;
    \item $\operatorname{conv}(K^{(1)}_i) \cap \operatorname{conv}(K^{(1)}_j) = \emptyset$  whenever $i\ne j$;
    \item for any $(a,b) \in \Omega_{\{f_1,f_2\}}(K)$,
    \begin{equation}\label{Thm3equx}
    (af_1+bf_2) \Big|_{K \setminus \bigcup_{j\ge1} K^{(1)}_j} \in H_s\Big(K \setminus \bigcup_{j\ge1} K^{(1)}_j\Big).
    \end{equation}
\end{enumerate}
By removing at most countably many points from each $K^{(1)}_j$, we may assume that  each $K^{(1)}_j$ has no isolated points. Furthermore, by passing to a subsequence, we may assume that the sequence $\{K^{(1)}_j\}_{j\geq 1}$ satisfies the following properties.
\begin{itemize}
  \item The sets $\Big\{K^{(1)}_j\Big\}_{j\geq 1}$ are arranged in left-to-right (or right-to-left) order along the interval $I$.
Consequently, $\Big\{K^{(1)}_j\Big\}_{j\geq 1}$ converges to some singleton set $\{x_0\}\subseteq K$ in the Hausdorff metric and furthermore,
\[
\overline{\bigcup_{j\geq 1}K^{(1)}_j}=\Big(\bigcup_{j\geq 1}K^{(1)}_j\Big)\cup\{x_0\}.
\]
  \item For each $j\geq 1$,
  \begin{equation}\label{equation1}
  \operatorname{dist}\big(K^{(1)}_j,\{x_0\}\big)\leq 2^{-j},
  \end{equation}
  where $\operatorname{dist}$ stands for the Hausdorff metric.
\end{itemize}
It follows from \cite[Lemma 3.4]{MO} that we may choose a compact set $E\subseteq (0,1)$ with no isolated points such that
\[
{\dim}_HE=0 \quad\text{and}\quad \overline{\dim}_BE=t-1.
\]
For each $j$, we define a compact subset $E_j\subseteq \operatorname{conv}(K^{(1)}_j)$ as follows:
\[
E_j=c_jE+x_j,
\]
where $c_j=\min\{2^{-j},\frac{1}{2}\left|\operatorname{conv}\big(K^{(1)}_j\big)\right|\}$ and $x_j$ is the left endpoint of the interval $\operatorname{conv}(K_j)$. That is, $E_j$ is an affine copy of $E$ and is contained in $\operatorname{conv}(K^{(1)}_j)$. Hence
\[\dim_HE_j=0\quad  \text{and}\quad \overline{\dim}_BE_j=t-1
\]
for each $j\geq 1$.
It is plain that
$\lim_{j\to\infty}E_j=\{x_0\}$ with respect to Hausdorff metric and thus
 \[
 \overline{\bigcup_{j\geq 1}E_j}=\Big(\bigcup_{j\geq 1}E_j\Big)\cup\{x_0\}.
 \]
Clearly, ${\dim}_H\overline{\bigcup_{j\geq 1}E_j}=0$.
It follows from the definition of $\{E_j\}_{j\geq 1}$ and (\ref{equation1}) that
\[
N_{2^{-n}}\Big(\bigcup_{j\geq 1}E_j\Big)\leq N_{2^{-n}}\Big(\bigcup_{1\leq j\leq n}E_j\Big)+N_{2^{-n}}\Big(\bigcup_{ j\geq n+1}E_j\Big)\leq nN_{2^{-n}}(E_1)+1.
\]
Therefore, we obtain that
\[\overline{\dim}_B\overline{\bigcup_{j\geq 1}E_j}=\overline{\dim}_B \bigcup_{j\geq 1}E_j=t-1.\]
We then define $K_j$ by
\[
K_j=E_j\cup K^{(1)}_j, \forall j\geq 1.
\]
Set $F=\overline{\bigcup_{j\geq 1} K_j}$ (in general, we do not have $F\subseteq K$). Then it is clear that
\begin{itemize}
\item $F=\big(\bigcup_{j\geq 1} K^{(1)}_j\big)\cup\big(\bigcup_{j\geq 1}E_j\big)\cup\{x_0\}\subseteq I$;
\item ${\dim}_HF=\dim_H K_j=s-1$  and $\overline{\dim}_BF=\overline{{\dim}}_BK_j=t-1$;
\item $\operatorname{conv}(K_i)\cap \operatorname{conv}(K_j)=\emptyset$ whenever $i\neq j$;
\item for any $(a,b)\in\mathbb{R}^2\setminus\{(0,0)\}$,
\begin{equation}\label{thm3equn1}
(af_1+bf_2)\Big|_{[0,1]\setminus F} \in H_{s}\Big([0,1]\setminus F\Big).
\end{equation}
\end{itemize}
We only verify the last property stated above. Indeed, if $(a,b)\in\Omega_{\{f_1,f_2\}}(K)$, it follows from ${\dim}_H\overline{\bigcup_{j\geq 1}E_j}=0$ and (\ref{Thm3equx}) that
\[
\begin{aligned}
\dim_HG_{af_1+bf_2}([0,1]\setminus F)&\geq\dim_HG_{af_1+bf_2}(K\setminus F)\\
&=\dim_HG_{af_1+bf_2}\Big(K\setminus \Big[\Big(\bigcup_{j\geq 1}K^{(1)}_j\Big)\cup\overline{\bigcup_{j\geq 1} E_j}\Big]\Big)\\
&=\dim_HG_{af_1+bf_2}\Big(K\setminus \bigcup_{j\geq 1}K^{(1)}_j\Big)=s.
\end{aligned}
\]
 If $(a,b)\notin\Omega_{\{f_1,f_2\}}(K)\setminus\{(0,0)\}$, it follows from $af_1+bf_2\in H_s([0,1])$ that
\[ \dim_HG_{af_1+bf_2}([0,1]\setminus K)=s.
\]
Therefore,
\[
\begin{aligned}
\dim_HG_{af_1+bf_2}([0,1]\setminus F)&\geq\dim_HG_{af_1+bf_2}\Big([0,1]\setminus \big(K\cup\big(\bigcup_{j\geq 1}E_j\big)\big)\Big) \\
&=\dim_HG_{af_1+bf_2}([0,1]\setminus K)=s.
\end{aligned}
\]
Combining these with (\ref{thm3equn2}), we conclude that (\ref{thm3equn1}) holds for any $(a,b)\in\mathbb{R}^2\setminus\{(0,0)\}$.

From now on, let $I_j = \operatorname{conv}(K_j)$ for each $j \in \mathbb{N}$.
By Lemmas \ref{balk1} and \ref{balk2}, for each $j \in \mathbb{N}$, we can choose a function $g_j \in C(K_j)$ satisfying
\[
\dim_H G_{g_j}(K_j) = s\  \text{and}\ \overline{\dim}_B G_{g_j}(K_j) = t.
\]
By applying Lemma \ref{lemL}, we may further assume that
\[
g_j(r_{I_j}) = g_j(l_{I_j}) = 0,
\]
where $r_{I_j}$ and $l_{I_j}$ denote the right and left endpoints of the interval $I_j$, respectively.
We extend $g_j$ linearly to the interval $[0,1]$, and denote the corresponding extension by $\widetilde{g}_j$, i.e.,
\[
\widetilde{g}_j(x) =
\begin{cases}
g_j(x), & x \in K_j, \\
0, & x \in [0,1] \setminus I_j, \\
\text{affine}, & \text{on each component of } I_j \setminus K_j.
\end{cases}
\]
By Lemmas \ref{lemLW} and \ref{lemLL}, we obtain that
\[\dim_H G_{\widetilde{g}_j}[0,1] = \dim_H G_{g_j}(K_j) = s\]
 and
 \[
 \overline{\dim}_BG_{\widetilde{g}_j}[0,1]=\overline{\dim}_B G_{g_j}(K_j)=t.
 \]
So $\widetilde{g}_j\in H_{s}[0,1]\cap\overline{B}_t[0,1]$.
Accordingly, we obtain a sequence of continuous functions $\{\widetilde{g}_j:j \geq 1\}\subseteq  H_{s}[0,1]\cap\overline{B}_t[0,1]$.
It is clear that $\{\operatorname{supp}(\widetilde{g}_j)\}_{j \geq 1}$ is pairwise disjoint, where $\operatorname{supp}(\widetilde{g}_j)$ denotes the support of $\widetilde{g}_j$, i.e.,
\[
\operatorname{supp}(\widetilde{g}_j) = \{ x \in [0,1] : \widetilde{g}_j(x) \neq 0 \}.
\]
It is clear that $\{\widetilde{g}_j:j\geq 1\}$ is linearly independent. Furthermore, $(\widetilde{g}_j)_{j\geq 1}$ forms a Schauder basic sequence. Denote $X_0 = \overline{\operatorname{span}}(\widetilde{g}_j)_{j \geq 1}$.
It is not difficult to verify that $\overline{f}_1, \overline{f}_2$ are linearly independent in the quotient space $C[0,1]/ X_0$. Indeed, suppose for contradiction that $\overline{f}_1$, $\overline{f}_2$ are linearly independent in the quotient space; then there exists $(a,b) \neq (0,0)$ such that
\begin{equation}\label{thm3eq4}
a f_1 + b f_2:=h \in X_0.
\end{equation}
Then there exists a sequence of scalars $(a_j)_{j \geq 1}$ such that
\[
h = \sum_{j=1}^\infty a_j \widetilde{g}_j.
\]
Then by the definitions of $\widetilde{g}_j, j\geq 1$, we have
\[
\left. h \right|_{[0,1] \setminus \bigcup_{j \geq 1} I_j} = 0 \quad \text{and} \quad \left. h \right|_{I_j} = a_j \widetilde{g}_j|_{I_j}, \ j \geq 1.
\]
It follows from the fact that $\widetilde{g}_j$ is affine on each component of $I_j \setminus K_j$ that
\[
\dim_H G_h(I_j \setminus K_j)= 1.
\]
Therefore we get
\[
\begin{aligned}
&\dim_H G_h\Big([0,1] \setminus \bigcup_{j \geq 1} K_j\Big)\\
=&\dim_H G_h\Big(\Big([0,1] \setminus \bigcup_{j \geq 1} I_j \Big)\ \cup \Big( \bigcup_{j \geq 1} I_j \setminus K_j\Big)\Big) \\
=&\max\Big\{ \dim_H G_h\Big([0,1] \setminus \bigcup_{j \geq 1} I_j\Big), \ \sup_{j \geq 1} \dim_H G_h(I_j \setminus K_j) \Big\}=1.
\end{aligned}
\]
Combining this estimation with (\ref{thm3equn1}) and (\ref{thm3eq4}), we reach a contradiction.
By applying Lemma \ref{lembasic}, the sequence $\mathcal{S} = \{f_1, f_2, \widetilde{g}_1, \widetilde{g}_2, \dots\}$ forms a Schauder basic sequence.

Next, we need to verify that $\overline{\operatorname{span}} \mathcal{S} \subseteq (H_{s}[0,1]\cap\overline{B}_t[0,1]) \cup \{0\}$.
Let $f \in \overline{\operatorname{span}} \mathcal{S}  \setminus \{0\}$. Then there exists a nonzero sequence $(a_n)_{n \geq 1}$ of scalars, such that
\[
f=a_1f_1+a_2f_2+\sum_{n\geq 3}a_n\widetilde{g}_{n-2}.
\]
\noindent\textbf{First case:} $a_i \neq 0$ for some $i=1,2$.

 We rewrite $f$ by
    \[
    f= a_1f_1 + a_2f_2 + g,
    \]
    where $g = \sum_{n \geq 3} a_n \widetilde{g}_{n-2}$.
    By the constructions of $\{K_j\}$,
   $
    \dim_H\left( \bigcup_{j \geq 1} K_j \right) = s - 1.
   $
    Hence, by (\ref{sec2equ}) and (\ref{sec2equ1}), we obtain
    \begin{equation}\label{thm3eq5}
    \dim_H G_f\Big( \bigcup_{j \geq 1} K_j \Big) \leq s.
    \end{equation}
    Since $g$ is affine on each component of $I_j \setminus K_j$ and
    \[
    \left. f \right|_{[0,1] \setminus \bigcup_{j \geq 1} I_j} = \left.\Big( a_1 f_1 + a_2 f_2\Big) \right|_{[0,1] \setminus \bigcup_{j \geq 1} I_j},
    \]
 together with Lemma \ref{lemL}, we obtain that
    \[
    \begin{aligned}
    &\dim_H G_f\Big([0,1] \setminus \bigcup_{j \geq 1} K_j\Big)= \dim_H G_f\Big(\Big([0,1] \setminus \bigcup_{j \geq 1} I_j \Big) \cup \Big( \bigcup_{j \geq 1} I_j \setminus K_j\Big)\Big) \\
    &= \max\Big\{ \dim_H G_{a_1 f_1 + a_2 f_2}\Big([0,1] \setminus \bigcup_{j \geq 1} I_j\Big), \ \sup_{j \geq 1} \dim_H G_f(I_j \setminus K_j) \Big\}\\
    &= \max\Big\{ \dim_H G_{a_1 f_1 + a_2 f_2}\Big([0,1] \setminus \bigcup_{j \geq 1} K_j\Big), \ \sup_{j \geq 1} \dim_H G_{a_1 f_1 + a_2 f_2}(I_j \setminus K_j) \Big\} \\
    &= \dim_H G_{a_1 f_1 + a_2 f_2}\Big([0,1] \setminus \bigcup_{j \geq 1} K_j\Big) = s.
    \end{aligned}
    \]
Combining this with (\ref{thm3eq5}), we have $f \in H_s[0,1]$. It follows from the fact that  $g$ is a linear extension of $g|_F$ to the interval [0,1] and $\overline{\dim}_BF=t-1$, together with Lemma \ref{lemLL}, that
\[\overline{\dim}_BG_g([0,1])\leq t.
\]
Combining this with Lemma \ref{lemfal-fr} and (\ref{thm3equn2}), we obtain that
\[\overline{\dim}_BG_f([0,1])\leq t.
\]
By applying (\ref{Sec4eq2}) and the fact that $f|_{[0,1]\setminus I}=(a_1f_1+a_2f_2)|_{[0,1]\setminus I}$, we have
\[
\overline{\dim}_BG_f([0,1])\geq \overline{\dim}_BG_f([0,1]\setminus I)=\overline{\dim}_BG_{a_1f_1+a_2f_2}([0,1]\setminus I)=t.
\]

\medskip
\noindent\textbf{Second case:} $a_i = 0$ for any $i=1,2$.

 Since $f \neq 0$, there exists $i_0 \geq 3$ such that $a_{i_0} \neq 0$. It follows from
   $ \left. f \right|_{K_{i_0}} = \left. a_{i_0} \widetilde{g}_{i_0} \right|_{K_{i_0}}$
that
    \[
    \dim G_f([0,1])\geq \dim G_f(K_{i_0})= \dim G_{g_{i_0}}(K_{i_0}) ={\dim}K_{i_0}+1,
    \]
    where $\dim$ denotes any one of ${\dim}_H$ and $\overline{\dim}_B$.
    On the other hand,
    since $f$ is the linear extension of $f|_F$ to the interval $[0,1]$, by Lemmas \ref{lemLW} and \ref{lemLL},
    \[
    \overline{\dim}_B G_f([0,1])\leq\overline{\dim}_BF+1=t
    \]
    and
    \[
   {\dim}_H G_f([0,1])=\max\{{\dim}_HG_f(F),1\}\leq {\dim}_HF+1=s.
    \]
   Thus in this subcase, we have $f\in H_s[0,1]\cap\overline{B}_t[0,1]$.

The proof is then complete.
\end{proof}
\begin{theorem}\label{thm3} Let $1<s\leq t\leq 2$ and $m,n\in\mathbb{N}$. Then $H_s[0,1]\cap \overline{B}_t[0,1]$ is $(n,m+n)$-lineable.
\end{theorem}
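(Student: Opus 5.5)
We follow the proof of Theorem \ref{thm4}, replacing the infinite sequence of bump functions by $m$ of them and Proposition \ref{sec4prop1} by Proposition \ref{sec4prop2}. First, $n$-lineability holds: by Theorem \ref{thmn2} (or Theorem \ref{thm4}), $H_s[0,1]\cap\overline{B}_t[0,1]$ contains infinite dimensional subspaces. Now fix a linearly independent family $\{f_i\}_{i=1}^n$ with $\operatorname{span}\{f_i\}\subseteq (H_s[0,1]\cap\overline{B}_t[0,1])\cup\{0\}$.

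\textbf{Step 1.} Lemma \ref{sec3prop1} gives a nonempty open interval $I$ such that every nonzero combination $\sum a_if_i$, restricted to $[0,1]\setminus I$, lies in $\overline{B}_t([0,1]\setminus I)$. Inside $I$ we pick a compact $K$ with $\dim_HK=s-1$, $\overline{\dim}_BK=t-1$ and $\mathcal{H}^{s-1}(K)>0$. For $s<t$ this comes from Lemma \ref{Ni-W}; for $s=t$ we use a central Cantor set.

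\textbf{Step 2.} We apply Proposition \ref{sec4prop2} to $\{f_i|_K\}$ and obtain $m$ compact sets $K_1^{(1)},\dots,K_m^{(1)}\subseteq K$ with pairwise disjoint convex hulls and $\dim_HK^{(1)}_j=s-1$, such that every combination $\sum a_if_i$ restricted to $\bigcup_jK^{(1)}_j$ lies in $H_{<s}$. We remove countably many points so that each $K^{(1)}_j$ has no isolated points. We then adjoin to each $K^{(1)}_j$ a small affine copy $E_j\subseteq\operatorname{conv}(K^{(1)}_j)$ of the set $E$ of \cite[Lemma 3.4]{MO}, which has $\dim_HE=0$ and $\overline{\dim}_BE=t-1$. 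Put $K_j=K^{(1)}_j\cup E_j$ and $F=\bigcup_{j=1}^mK_j$. Because the union is finite, no limit point or Hausdorff-metric convergence argument is needed. We get $\dim_HF=s-1$, $\overline{\dim}_BF=t-1$, and $\dim_HG_h(F)<s$ for every $h\in\operatorname{span}\{f_i\}$. Adding the sets $E_j$ keeps the Hausdorff dimension below $s$, since $\dim_HG_h(E_j)\le\dim_H(E_j\times\mathbb{R})=1$.

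Since each nonzero $h$ in the span lies in $H_s[0,1]$ and countable stability gives $\dim_HG_h(F)<s$, we obtain
\[
\dim_HG_h([0,1]\setminus F)=s .
\]

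\textbf{Step 3.} On each $K_j$ we choose $g_j\in C(K_j)$ with $\dim_HG_{g_j}(K_j)=s$ and $\overline{\dim}_BG_{g_j}(K_j)=t$, vanishing at the endpoints of $I_j=\operatorname{conv}(K_j)$. This uses Lemmas \ref{balk1}, \ref{balk2} and \ref{lemL}. Let $\widetilde g_j$ be the extension that is affine on the gaps of $K_j$ and zero outside $I_j$; by Lemmas \ref{lemLW} and \ref{lemLL} we have $\widetilde g_j\in H_s[0,1]\cap\overline{B}_t[0,1]$. Set $W=\operatorname{span}\{f_1,\dots,f_n,\widetilde g_1,\dots,\widetilde g_m\}$.

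\textbf{Step 4: linear independence.} Suppose a nonzero $h\in\operatorname{span}\{f_i\}$ equals some $\sum b_j\widetilde g_j$. That sum has graph of Hausdorff dimension $1$ on $[0,1]\setminus F$, since it is zero or affine there off $\bigcup_jK_j$. This contradicts $\dim_HG_h([0,1]\setminus F)=s>1$. Combined with the disjoint supports of the $\widetilde g_j$, this gives $\dim W=n+m$.

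\textbf{Step 5: membership.} Take $f=h+g\ne0$ with $h=\sum a_if_i$ and $g=\sum b_j\widetilde g_j$.

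If $h\neq0$: on $[0,1]\setminus\bigcup_jK_j$ the function $g$ is piecewise affine with finitely many pieces, hence Lipschitz there. By Lemma \ref{lemL}, $\dim_HG_f$ equals $\dim_HG_h=s$ on that set. On $F$ the trivial bound gives $\dim_HG_f(F)\le s$. For the box dimension, $\overline{\dim}_BG_g\le t$ by Lemma \ref{lemLL}, so $\overline{\dim}_BG_f\le t$ by Lemma \ref{lemfal-fr}. Also $f=h$ on $[0,1]\setminus I$, and by Step 1 this gives $\overline{\dim}_BG_f\ge t$.

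If $h=0$: $f$ is the linear extension of $f|_F$. Lemmas \ref{lemLW} and \ref{lemLL} give the upper bounds $s$ and $t$, and restricting to some $K_{j_0}$ with $b_{j_0}\ne0$ gives the matching lower bounds.

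The main obstacle is Step 2: we need the span to have graph of Hausdorff dimension strictly less than $s$ on all of $F$, not merely on each $K_j$, while Step 4 still yields dimension $s$ off $F$. Proposition \ref{sec4prop2} is designed to provide exactly this.
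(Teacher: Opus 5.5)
Your proposal follows the paper's proof essentially step for step: Lemma \ref{sec3prop1} for the interval $I$, a compact $K\subseteq I$, Proposition \ref{sec4prop2} for the sets $K^{(1)}_j$, adjoining perfect sets of Hausdorff dimension $0$ and upper box dimension $t-1$, prevalent $g_j$ with their linear extensions, and then the same independence and membership checks. Taking $\widetilde g_j$ to vanish off $I_j$, as in Theorem \ref{thm4}, only makes the independence of the $\widetilde g_j$ immediate. Your remark on $n$-lineability addresses a point the paper leaves implicit; cite Theorem \ref{thm4} there, since Theorem \ref{thmn2} needs $s<t$. The argument is sound except for one false justification.

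In Step 5 you claim that $g=\sum_j b_j\widetilde g_j$ is piecewise affine with finitely many pieces on $[0,1]\setminus\bigcup_j K_j$, and hence Lipschitz there. This is false in general. For $s<2$, each $K_j$ is a nowhere dense perfect set, so $I_j\setminus K_j$ has infinitely many components. Moreover, if $b_j\neq 0$, a Lipschitz bound on the dense set $I_j\setminus K_j$ would pass by continuity to $I_j$. That would give $\dim_H G_{g_j}(K_j)=\dim_H K_j=s-1<s$, a contradiction.

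Your conclusion nevertheless holds. Every point of the relatively open set $[0,1]\setminus F$ has a neighbourhood on which $g$ is affine, so $g$ is nearly locally Lipschitz there and Lemma \ref{lemL} applies. Equivalently, use countable stability over the gaps, exactly as you already did in Step 4; this is how the paper argues. A minor citation point: normalizing $g_j$ to vanish at the endpoints of $I_j$ preserves the upper box dimension by Lemma \ref{lembi}, since Lemma \ref{lemL} only covers the Hausdorff dimension.
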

\begin{proof}
Let $\{f_i\}^n_{i=1}\subseteq H_s[0,1]\cap\overline{B}_t[0,1]$ be linearly independent and
\begin{equation}\label{thmequx1}
\operatorname{span}\{f_i\}_{1\leq i\leq n}\subseteq (H_s[0,1]\cap\overline{B}_t[0,1])\cup\{0\}.
\end{equation}
In view of Lemma \ref{sec3prop1}, there exists a nonempty open interval $I\subseteq[0,1]$ such that
\begin{equation}\label{Sec4eq1}
\left.\sum_{i=1}^n a_i f_i\right|_{[0,1]\setminus I} \in \overline{B}_t([0,1]\setminus I)
\end{equation}
for any $(a_1,\dots,a_n)\neq(0,\dots,0)$. Take a compact subset $K\subseteq I$ such that
\[{\dim}_HK=s-1,\ \overline{\dim}_BK=t-1\ \text{and}\ \mathcal{H}^{s-1}(K)>0.
\]
From Proposition \ref{sec4prop2}, there exist $m$ compact subsets of $K$, denoted by $\{K^{(1)}_j\}_{j=1}^m$ satisfying the following properties:
\begin{itemize}
\item $\dim_H K^{(1)}_j=s-1$, $\forall$ $j=1,2,\dots,m$;
\item $\operatorname{conv}(K^{(1)}_i)\cap \operatorname{conv}(K^{(1)}_j)=\emptyset$ whenever $i\neq j$;
\item for any $(a_1,\dots,a_n)\in\mathbb{R}^n$,
\[
\left.\sum_{i=1}^n a_i f_i\right|_{\bigcup_{j=1}^m K^{(1)}_j}
\in H_{<s}\Big(\bigcup_{j=1}^m K^{(1)}_j\Big).
\]
\end{itemize}
We can remove countably many points from $K^{(1)}_j$ without changing the above three properties, so we may assume $K^{(1)}_j$ contains no isolated points.
For each $j\in\{1,2,\dots,m\}$, take a compact subset $K^{(2)}_j\subseteq\operatorname{conv}(K^{(1)}_j)$ with no isolated points and such that
\[{\dim}_HK^{(2)}_j=0\ \ \text{and}\ \overline{{\dim}}_BK^{(2)}_j=t-1.
\]
This gives that
\[
{\dim}_HG_{\sum_{i=1}^n a_i f_i}\Big(\bigcup_{j=1}^m K^{(2)}_j\Big)\leq 1
\]
for any $(a_1,\dots,a_n)\in\mathbb{R}^n$.
We define $K_j=K^{(1)}_j\bigcup K^{(2)}_j$ for each $j=\{1,2,\dots,m\}$. Then we obtain a family of compact sets $\{K_j\}_{1\leq j\leq m}$ such that for all $j=1,2,\dots,m$,
\begin{itemize}
\item $K_j\subseteq I$ contains no isolated points;
\item $\dim_H K_j=s-1$ and $\overline{{\dim}}_BK_j=t-1$;
\item $\operatorname{conv}(K_i)\cap \operatorname{conv}(K_j)=\emptyset$ whenever $i\neq j$;
\item for any $(a_1,\dots,a_n)\in\mathbb{R}^n$,
\begin{equation}\label{thmequx}
\left.\sum_{i=1}^n a_i f_i\right|_{\bigcup_{j=1}^m K_j} \in H_{<s}\Big(\bigcup_{j=1}^m K_j\Big).
\end{equation}
\end{itemize}
By (\ref{thmequx1}), (\ref{Sec4eq1}) and (\ref{thmequx}), we obtain that
\begin{equation}\label{Sec4equn2}
\left.\sum_{i=1}^n a_i f_i\right|_{[0,1]\setminus \bigcup^m_{j=1}K_j} \in H_s\Big([0,1]\setminus \bigcup^m_{j=1}K_j\Big)\bigcap\overline{B}_t\Big([0,1]\setminus \bigcup^m_{j=1}K_j\Big)
\end{equation}
for any $(a_1,\dots,a_n)\neq(0,\dots,0)$.
By applying Lemmas \ref{balk1} and \ref{balk2}, we take a function $ g_j\in C(K_j)$  such that
\[
{\dim}_HG_{g_j}(K_j)=s\ \text{and}\  \overline{{\dim}}_BG_{g_j}(K_j)=t
\]
for each $j\in\{1,2,\dots,m\}$.
We define $\tilde{g}_j\in C[0,1]$ by
\[
\widetilde{g}_j(x) =
\begin{cases}
g_j(x), & x \in K_j, \\
\text{affine}, & \text{on each component of } [0,1] \setminus K_j.
\end{cases}
\]
Then by Lemmas \ref{lemLW} and \ref{lemLL}, we have $\widetilde{g}_j\in H_s[0,1]\cap\overline{B}_t[0,1]$ for each $j\in\{1,2,\dots,m\}$.

In what follows, we need to prove that the family $\mathcal{S}=\{f_1,\dots,f_n,\widetilde{g}_1\dots,\widetilde{g}_m\}$ is linearly independent and
\[
\operatorname{span}\mathcal{S}\subseteq\big( H_s[0,1]\cap\overline{B}_t[0,1]\big)\cup\{0\}.
\]
We first prove that $\mathcal{S}$ is linearly independent.
Assume that
\begin{equation}\label{thm3equan1}
h := \sum_{i=1}^n a_i f_i + \sum_{j=1}^m b_j \widetilde{g}_j = 0.
\end{equation}
Since $\widetilde{g}_j$ is affine on each component of $[0,1]\setminus K_j$, the sum
$\sum_{j=1}^m b_j \widetilde{g}_j$
is affine on each component of $[0,1]\setminus \bigcup_{j=1}^m K_j$.
Hence by  Lemma \ref{lemL},
\[
\dim_H G_h\Big([0,1]\setminus \bigcup_{j=1}^m K_j\Big) = \dim_H G_{\sum_{i=1}^n a_i f_i}\Big([0,1]\setminus \bigcup_{j=1}^m K_j\Big).
\]
Combining this with (\ref{Sec4equn2}) and (\ref{thm3equan1}), we get that
$a_i=0, i=1,2,\dots,n.$
This implies that
\[
h = \sum_{j=1}^m b_j \widetilde{g}_j = 0.
\]
If $b_{j_0} \neq 0$ for some $j_0 \in \{1,2,\dots,m\}$, since
$\widetilde{g}_j$ is affine on each component of $[0,1]\setminus K_j$, together with the disjointness of $\big\{\operatorname{conv}(K_{j})\big\}_{1\leq j\leq n}$, we obtain that
\[
\sum_{j\neq j_0} b_j \widetilde{g}_j \text{ is affine on } \operatorname{conv}(K_{j_0}).
\]
Therefore,
\[
\begin{aligned}
\dim_H G_h[0,1] &\geq \dim_H G_h\left(K_{j_0}\right) = \dim_H G_{\widetilde{g}_{j_0}}\left(K_{j_0}\right) \\
&= \dim_H G_{g_{j_0}}(K_{j_0}) = s.
\end{aligned}
\]
This contradicts $h=0$. Hence $b_j=0, \forall j$. This concludes that the family $\mathcal{S}$ is linearly independent. Let $f\in\operatorname{span}\mathcal{S}\setminus\{0\}$.
Then
\[
f= \sum_{i=1}^n a_i f_i + \sum_{j=1}^m b_j \widetilde{g}_j
\]
for some nonzero vector $(a_1,\dots,a_n,b_1,\dots,b_m)$.

If $(a_1,\dots,a_n)\ne(0,\dots,0)$, it follows from the fact that $\sum^m_{j=1}b_j\widetilde{g}_j$ is affine on each component of $[0,1]\setminus\bigcup_{1\leq j\leq m}K_j$, together with ${\dim}_H{\bigcup^m_{j=1} K_j}=s-1$, (\ref{sec2equ1}), (\ref{Sec4eq1}), (\ref{Sec4equn2}) and Lemmas \ref{lemL}, \ref{lembi} and \ref{lemLL} that
\[
{\dim}_HG_f[0,1]=s\ \text{and}\ \overline{{\dim}}_BG_f([0,1])=t.
\]
Indeed,
\[
\begin{aligned}
\dim_H G_{f}([0,1])&=\max\Big\{\dim_H G_{f}\Big([0,1]\setminus\bigcup^m_{j=1}K_j\Big),\dim_H G_{f}\Big(\bigcup^m_{j=1}K_j\Big)\Big\}\\
&=\max\Big\{\dim_H G_{\sum^n_{i=1}a_if_i}\Big([0,1]\setminus\bigcup^m_{j=1}K_j\Big),\dim_H G_{f}\Big(\bigcup^m_{j=1}K_j\Big)\Big\} \\
&=s
\end{aligned}
\]
and
\[
\overline{{\dim}}_B G_{f}([0,1])\geq\overline{{\dim}}_B G_{f}([0,1]\setminus I)=\overline{{\dim}}_B G_{\sum^n_{i=1}a_if_i}([0,1]\setminus I)
=t.
\]
It follows from Lemma \ref{lemLL} and $\overline{\dim}_B\big(\bigcup^m_{j=1}K_j\big)=t-1$ that
\begin{equation}\label{sec4eq3}
\overline{{\dim}}_B G_{\sum^m_{j=1}b_j\widetilde{g}_j}([0,1])\leq t.
\end{equation}
Combining this with $\overline{{\dim}}_B G_{\sum^n_{i=1}a_if_i}([0,1])\leq t$ and Lemma \ref{lemfal-fr}, we obtain that  $\overline{{\dim}}_B G_f([0,1])\leq t$.

If $(a_1,\dots,a_n)=(0,\dots,0)$, then there exists $j_0$ such that $b_{j_0}\ne 0$. By the construction of $\widetilde{g}_j$, we have
\[
f|_{K_{j_0}}=b_{j_0}g_{j_0}.
\]
Furthermore,
\[
{\dim}G_f([0,1])\geq{\dim}G_f(K_{j_0})={\dim}G_{g_{j_0}}(K_{j_0})={\dim}K_{j_0}+1,
\]
where ${\dim}$ denotes any one of ${\dim}_H$ and $\overline{\dim}_B$.
By inequality (\ref{sec4eq3}), we have $\overline{\dim}_BG_f([0,1])\leq t$.
It follows from the fact that $\sum^m_{j=1}b_j\widetilde{g}_j$ is affine on each component of $[0,1]\setminus\bigcup_{j}K_j$ and Lemma \ref{lemLL} that ${\dim}_HG_f([0,1])\leq s$.

 The proof is thus complete.
 \end{proof}

\end{document}